\documentclass[11pt,reqno]{amsart}

\usepackage[utf8]{inputenc}
\usepackage[T1]{fontenc}
\usepackage{amsmath,amssymb,amsthm}
\usepackage{tikz}
\usepackage{booktabs}
\usepackage{enumitem}
\usepackage{caption}
\usepackage{microtype}
\usepackage[margin=1in]{geometry}
\usepackage[colorlinks=true,linkcolor=blue,citecolor=blue,urlcolor=blue]{hyperref}

\setlist{topsep=3pt,itemsep=1pt,parsep=0pt}
\theoremstyle{plain}
\newtheorem{theorem}{Theorem}[section]
\newtheorem{lemma}[theorem]{Lemma}
\newtheorem{proposition}[theorem]{Proposition}

\theoremstyle{definition}
\newtheorem{definition}[theorem]{Definition}
\newtheorem{convention}[theorem]{Convention}

\theoremstyle{remark}
\newtheorem{remark}[theorem]{Remark}

\begin{document}

\title[Weakly contractible non-contractible spaces of ten points]{On Weakly Contractible
Non-Contractible Finite Topological Spaces of Ten Points}

\author{Ponaki Das}
\address{Department of Mathematics, North-Eastern Hill University,
Shillong 793022, India}
\email{ponaki.das20@gmail.com}

\author{Sainkupar Marwein Mawiong}
\address{Department of Basic Sciences and Social Sciences,
North-Eastern Hill University, Shillong 793022, India}
\email{skupar@gmail.com}

\date{\today}

\subjclass[2020]{Primary 55P15; Secondary 06A99, 05C10}

\keywords{Finite topological spaces, posets, weakly contractible,
homotopically trivial, beat points, order complex, elementary
collapses, classification}

\begin{abstract}
Cianci and Ottina proved that a homotopically trivial
non-contractible finite $T_0$-space cannot have fewer than nine
points and classified all such spaces with exactly nine points.
The present paper completes the classification for spaces with
exactly ten points. No such space exists when the number of
middle elements is one or two; this is established by
Euler-characteristic arithmetic, beat-point arguments, and an
analysis of forced naked edges. For exactly three middle elements
there are precisely six spaces up to homeomorphism, forming three
explicit types and their order-duals; for exactly four middle
elements there are precisely four such spaces. The ten valid
spaces are each shown to have a contractible order complex: seven
explicit elementary collapse sequences are given, one for each of
Types~I through~VII, and the three remaining spaces, the
order-duals of Types~I, II, and~III, inherit contractibility
from the identity $\mathcal{K}(X^{\mathrm{op}})=\mathcal{K}(X)$
of simplicial complexes, since chains in $X$ and $X^{\mathrm{op}}$
coincide as sets and any collapse sequence for $\mathcal{K}(X)$
is simultaneously one for $\mathcal{K}(X^{\mathrm{op}})$.
\end{abstract}

\maketitle

\section{Introduction}\label{sec:intro}

Alexandroff~\cite{Alexandroff1937} established a bijective
correspondence between finite $T_0$-topological spaces and finite
partially ordered sets. Stong~\cite{Stong1966} introduced beat
points and showed that every finite $T_0$-space deformation retracts
to a unique minimal core. McCord~\cite{McCord1966} then constructed
a weak homotopy equivalence from the geometric realisation of the
order complex $|\mathcal{K}(X)|$ to the space $X$, linking the
combinatorial structure of $X$ to its classical homotopy theory.

A striking phenomenon in the theory of finite spaces is the existence
of minimal spaces that are weakly contractible yet not contractible
(in the language of this paper: \emph{homotopically trivial}
yet non-contractible; see Remark~\ref{rem:terminology} for precise equivalences).
Such a space has all homotopy groups trivial and a contractible order
complex, but it does not deformation retract to a single point. This
cannot occur for CW-complexes, where weak contractibility implies
contractibility. Cianci and
Ottina~\cite{CianciOttina2020,CianciOttina2018} proved that such a
space requires at least nine points and classified all examples with
exactly nine points. The present paper provides the complete
classification at ten points.
For foundational results on minimal finite models and on
strong homotopy types of finite spaces, we refer to Barmak
and Minian~\cite{BarmakMinian2007,BarmakMinian2012}.

\begin{theorem}[Main Theorem]\label{thm:main}
Let $X$ be a homotopically trivial non-contractible minimal finite
$T_0$-space with $|X|=10$. Then the following hold.
\begin{enumerate}[label=\textup{(\arabic*)}]
\item No such space exists with $|B_X|=1$.
\item No such space exists with $|B_X|=2$.
\item There are exactly six such spaces with $|B_X|=3$, up to
      homeomorphism, namely Types~\textup{I}, \textup{II},
      \textup{III} with $(|C|,|A|)=(3,4)$ and their order-duals.
\item There are exactly four such spaces with $|B_X|=4$, up to
      homeomorphism, namely Types~\textup{IV}, \textup{V},
      \textup{VI}, \textup{VII} with $(|C|,|A|)=(3,3)$.
\end{enumerate}
All spaces in items~\textup{(3)} and~\textup{(4)} have height two,
have $B_X$ an antichain, and possess a contractible order complex.
\end{theorem}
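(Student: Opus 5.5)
The main theorem is the culmination of the whole paper, so my plan is to assemble it from a structural reduction followed by a case analysis on $|B_X|$. Here $B_X$ denotes the set of \emph{middle} elements (neither maximal nor minimal), $C$ the maximal and $A$ the minimal elements.

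\textbf{Reduction.} Minimality means $X$ has no beat points. The first consequence I will use is that every maximal element covers at least two elements and every minimal element is covered by at least two. Homotopical triviality gives $\chi(\mathcal{K}(X))=1$, and McCord's theorem makes this a statement about $X$ itself.

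Next I invoke the Cianci--Ottina lower bound: any such space has at least nine points, and the nine-point examples are classified. From this I will derive:
\begin{itemize}
\item $|A|,|C|\ge 3$, since two minimal (or two maximal) points force a suspension-like or contractible structure;
\item $B_X\neq\emptyset$, since height one means a graph, and a homotopically trivial graph is a tree with leaves, i.e.\ beat points.
\end{itemize}
So $|B_X|\le 4$ and $(|C|,|A|)$ ranges over few pairs, and order-duality lets me assume $|C|\le|A|$.

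\textbf{Case $|B_X|\in\{1,2\}$.} I will write $\chi$ as an alternating count of chains, grouped by which middle elements they contain:
\[
\chi=\#\text{vertices}-\#\text{edges}+\#\text{triangles}-\cdots
\]
With one middle point $b$, the link structure forces $\chi=1$ to become an edge-count equation between $A$ and $C$. I then show there are \emph{naked edges}: comparabilities $a<c$ not passing through $b$. These are forced because without them $U_b$ or $F_b$ leaves a beat point. A naked edge contributes a loop in $H_1$ unless cancelled, and the parity and arithmetic of $\chi$ rule this out. For two middle points I will split by whether they are comparable (height three versus two) and repeat the same argument.

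\textbf{Cases $|B_X|=3,4$.} First I show $B_X$ is an antichain and $X$ has height two. A chain $b<b'$ creates a beat point or consumes too many points to meet the covering-degree bounds. The Euler-characteristic count then pins $(|C|,|A|)$ to $(3,4)$, or its dual, and to $(3,3)$ respectively.

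What remains is a finite enumeration of bipartite-like Hasse diagrams. Each point of $C$ covers at least two points, each point of $A$ is covered at least twice, there are no beat points, and $\chi=1$. I will reduce up to isomorphism by fixing the neighbourhoods of the middle points first, and then check that $H_1(\mathcal{K}(X))=0$.

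For each survivor I give explicit elementary collapse sequences of $\mathcal{K}(X)$ to a point, which proves contractibility, hence homotopical triviality. Non-contractibility follows from minimality together with $|X|>1$, by Stong's theorem. For the duals I use $\mathcal{K}(X^{\mathrm{op}})=\mathcal{K}(X)$.

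\textbf{Main obstacle.} The main obstacle is completeness of the enumeration in the $|B_X|=3,4$ cases, i.e.\ showing that exactly Types I--VII survive with no omissions or hidden isomorphisms. To control this I plan to classify by the degree sequence of $B_X$ into $A$ and $C$. I will then use the $H_1$ computation (for instance, cycles in the height-two bipartite graph that are not filled by middle points) as a quick elimination criterion before attempting collapses.
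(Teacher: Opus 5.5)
Your architecture matches the paper's: size bounds, a height reduction, a split on $|B_X|$, enumeration by degree patterns, explicit collapses, and duality via $\mathcal{K}(X^{\mathrm{op}})=\mathcal{K}(X)$. The gap is the step meant to make $B_X$ an antichain for $|B_X|=3,4$. You claim a comparability $b<b'$ in $B_X$ ``creates a beat point or consumes too many points,'' but it need not create a beat point or violate any degree or Euler constraint.

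Here is a counterexample in the paper's convention ($C$ minimal, $A$ maximal; you have them reversed). Let $C=\{c_1,c_2,c_3\}$, $B=\{b_1,b_2,b_3\}$, $A=\{a_1,\dots,a_4\}$, with covers $c_1,c_2<b_1$; $c_1,c_2<b_3$; $c_3<b_2$; $b_1<b_2$; $b_1<a_3$; $b_2<a_1,a_2$; $b_3<a_1,\dots,a_4$; $c_3<a_3$; $c_3<a_4$.
\begin{itemize}
\item No $\hat U_x$ has a maximum and no $\hat F_x$ has a minimum, so there are no beat points.
\item All $\beta_j,\alpha_j\ge 2$.
\item The $f$-vector is $(10,29,24,4)$, so $\chi(\mathcal{K}(X))=1$.
\item Yet $h(X)=3$.
\end{itemize}
This space is excluded only by homology. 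The edge $\{c_3,a_4\}$ lies in no $2$-simplex, since the only middle above $c_3$ is $b_2\not<a_4$. Hence $[c_3,a_4]-[c_1,a_4]+[c_1,b_2]-[c_3,b_2]$ is a non-bounding $1$-cycle, which is exactly the paper's argument in Case~(0c).

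The same happens for $|B_X|=4$ in the bowtie $b_1,b_2<b_3,b_4$. For example, $c_1,c_2<b_1,b_2$; $c_3<b_3,b_4$; $b_1,b_2<b_3,b_4,a_3$; $b_3,b_4<a_1,a_2$; $c_3<a_3$ is minimal with $f$-vector $(10,33,40,16)$, so $\chi=1$. The paper's Case~G needs a non-zero class in $H_3$, coming from the $3$-sphere $\mathcal{K}(\{c,c_0\}\circledast\{b_1,b_2\}\circledast\{b_3,b_4\}\circledast\{a,a_0\})$.

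At height three $\mathcal{K}(X)$ is $3$-dimensional, so $\chi=1$ and $H_1=0$ no longer imply acyclicity. Your height-two homology test therefore does not transfer either. What you are missing is the paper's height-three analysis. After Theorem~\ref{thm:height} gives $h(X)\le 3$, Cases~(0a)--(0c) for $|B_X|=3$ and Cases~A--K for $|B_X|=4$ combine:
\begin{itemize}
\item Lemma~\ref{lem:L31};
\item the Euler formula of Lemma~\ref{lem:euler3} for comparable middles;
\item beat-point arguments;
\item naked-edge $1$-cycles;
\item in Case~G, the $3$-cycle above.
\end{itemize}

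There are two smaller gaps.

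First, your route to $|A|,|C|\ge 3$ via a ``suspension-like structure'' skips the hypothesis that makes it work. Minimality with two maximal points gives $X\cong\mathbb{S}(X')$. Deducing that $X'$ is homotopically trivial via Lemma~\ref{lem:L34} requires that $\pi_1(X')$ not be a non-trivial perfect group. The paper obtains this from freeness of $\pi_1$ for spaces with at most $12$ points; only then does the $8$-point $X'$ contradict the nine-point bound.

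Second, in the antichain subcase of $|B_X|=2$, arithmetic of $\chi$ alone does not close the case. Take $(m,n)=(3,5)$, $L_1=L_2=\{c_1,c_2\}$, $U_1=U_2=\{a_1,a_2,a_3\}$, and extra covers $c_3<a_4,a_5$, $c_1<a_4,a_5$, $c_2<a_4$. This space is minimal, has $\chi=1$, and satisfies \eqref{eq:6}--\eqref{eq:7}. The paper removes it with Lemma~\ref{lem:L33}. Your naked-edge idea can also remove it, but only once stated precisely (a naked edge lies in no $2$-simplex, so any $1$-cycle through it is non-bounding) and an actual cycle is exhibited.
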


Section~\ref{sec:prelim} develops the background and proves all
auxiliary lemmas. Sections~\ref{sec:bx1} and~\ref{sec:bx2}
establish the impossibility results. Section~\ref{sec:bx3}
classifies the case $|B_X|=3$ and Section~\ref{sec:bx4} the case
$|B_X|=4$. Section~\ref{sec:conclusion} summarises the results.

\begin{remark}[Computational verification]\label{rem:sagemath}
The classification obtained in Theorem~\ref{thm:main} admits an
independent computational verification. Using
SageMath~\cite{sagemath}, we enumerated all $2{,}567{,}284$
unlabelled posets on ten elements (OEIS~A000112),
filtered for the minimality condition (the absence of up-beat
and down-beat points) and tested the resulting $7{,}929$
minimal posets for acyclicity of the associated order complex
$\mathcal{K}(X)$ via its Euler characteristic and integral
homology groups. The procedure is an exhaustive enumeration over
a finite search space and involves no heuristics; the SageMath
code is available from the authors upon request. This verification
is purely auxiliary and is not used in any proof in this paper.

A few words on why this suffices to detect homotopical
triviality. For a general simplicial complex, vanishing reduced
homology does not imply contractibility: acyclic triangulations
of the Poincar\'e homology sphere, and more generally
presentation complexes of non-trivial perfect groups with
balanced presentations, provide classical obstructions. In the
present setting, however, \cite[Theorem~5.7]{CianciOttina2018}
guarantees that the fundamental group $\pi_1(\mathcal{K}(X))$
of the order complex of any finite $T_0$-space $X$ with
$|X|\leq 12$ is a free group. The standard chain of inferences
then proceeds in three steps:
\begin{enumerate}[label=\textup{(\arabic*)}]
\item Free $\pi_1$ together with $H_1=0$ forces $\pi_1$ trivial:
since $H_1\cong\pi_1^{\mathrm{ab}}$ for any path-connected space,
the assumption $H_1(\mathcal{K}(X);\mathbb{Z})=0$ gives that the
free group $\pi_1(\mathcal{K}(X))$ has trivial abelianisation, and
the only free group with trivial abelianisation is the trivial
group, so $\pi_1(\mathcal{K}(X))=0$.
\item Simply connected with vanishing reduced integral homology
implies all higher homotopy groups vanish: by the Hurewicz theorem,
the lowest non-trivial homotopy group of a simply connected space
is detected by reduced homology, so
$\widetilde{H}_n(\mathcal{K}(X);\mathbb{Z})=0$ for all $n\geq 2$
forces $\pi_n(\mathcal{K}(X))=0$ for all $n\geq 2$.
Combined with step~(1), $\pi_n(\mathcal{K}(X))=0$ for every
$n\geq 1$.
\item Whitehead's theorem~\cite{Whitehead1949} then yields
contractibility: a CW complex with all homotopy groups trivial is
contractible.
\end{enumerate}
By McCord's theorem~\cite{McCord1966}, contractibility of
$\mathcal{K}(X)$ is equivalent to homotopical triviality of $X$.
Combining the chain~\textup{(1)--(3)} above with McCord's theorem,
and using the free-$\pi_1$ input~\cite[Theorem~5.7]{CianciOttina2018}
(which is what makes step~(1) available for $|X|\leq 12$),
verifying connectedness, the Euler characteristic, and the integral
homology of $\mathcal{K}(X)$ is a complete test for homotopical
triviality of $X$ in this size range. The completeness depends on
all three ingredients (free $\pi_1$, Hurewicz, Whitehead) and is
not a stand-alone consequence of the homology computation.

We emphasise that throughout this paper, configurations are
eliminated by showing a non-bounding cycle in some
$H_n(\mathcal{K}(X);\mathbb{Z})$ (Lemmas~\ref{lem:naked}
and~\ref{lem:template}, and the rank-$H_3$ argument in Case~G of
Section~\ref{sec:bx4}) or by
exhibiting a beat point, never by arguing that $\mathcal{K}(X)$
lacks a free face. Absence of a free face only implies
non-collapsibility; it does not imply non-contractibility, as the
classical examples of the dunce hat and Bing's house with two
rooms demonstrate. The integral homology test above detects
homotopical triviality directly, independent of collapsibility.

The enumeration produces the following counts:
\[
\begin{array}{c|c|c}
|B_X| & \text{Analytic} & \text{SageMath} \\ \hline
1 & 0 & 0 \\
2 & 0 & 0 \\
3 & 6 & 6 \\
4 & 4 & 4 \\ \hline
\text{Total} & 10 & 10
\end{array}
\]
The two columns agree in every row, yielding an independent
confirmation of Theorem~\ref{thm:main} by a method that shares
no proof machinery with the analytic case analyses of
Sections~\ref{sec:bx3} and~\ref{sec:bx4}.
\end{remark}

\section{Preliminaries}\label{sec:prelim}

\subsection{Finite $T_0$-Spaces and Their Order Complexes}

A topological space is $T_0$ if for any two distinct points there
exists an open set containing one but not the other (the Kolmogorov
separation axiom). Alexandroff proved that finite $T_0$-spaces
correspond bijectively to finite posets via the specialisation
order: $x\leq y$ if every open set containing $x$ also contains $y$.
Following Barmak~\cite{Barmak2011}, for $x\in X$ we write
\[
U_x=\{y\in X:y\leq x\},\quad
F_x=\{y\in X:y\geq x\},\quad
\hat{U}_x=U_x\setminus\{x\},\quad
\hat{F}_x=F_x\setminus\{x\}.
\]
Thus $U_x$ is the downset generated by $x$,
$F_x$ is the upset generated by $x$,
and $\hat{U}_x$, $\hat{F}_x$ are the corresponding strict downset
and strict upset.

An element $x\in X$ is an \emph{up-beat point} if $\hat{F}_x$ has
a minimum, and a \emph{down-beat point} if $\hat{U}_x$ has a
maximum~\cite[Def.\,1.3.3]{Barmak2011}. By~\cite[Prop.\,1.3.4]{Barmak2011},
removing a beat point gives a strong deformation retract of $X$, so
a \emph{minimal} space has no beat points.

We write $\operatorname{Max}(X)$ for the set of maximal elements
of $X$ and $\operatorname{Min}(X)$ for the set of minimal elements.
In the height-two notation of Section~\ref{subsec:height2} below,
$\operatorname{Max}(X)=A$ and $\operatorname{Min}(X)=C$.

The \emph{order complex} $\mathcal{K}(X)$ of a finite $T_0$-space
is the simplicial complex whose simplices are the nonempty chains
of $X$. McCord's theorem~\cite{McCord1966} provides a weak homotopy
equivalence $|\mathcal{K}(X)|\to X$. For a thorough treatment of
finite spaces and their connections to classical homotopy theory,
see May~\cite{May2003}.

\subsection{The Three Notions of Contractibility}

It is important to distinguish three related notions. A finite
$T_0$-space $X$ is \emph{contractible} if it deformation retracts
to a single point in the finite-topology sense; Stong showed this
is equivalent to the minimal core of $X$ being a single point. A
finite $T_0$-space is \emph{weakly contractible} if $\pi_n(X,x_0)=0$
for every $n\geq 1$ and every basepoint $x_0$, and $X$ is
path-connected. A finite $T_0$-space is \emph{homotopically trivial}
if its order complex $\mathcal{K}(X)$ is contractible.

Since $|\mathcal{K}(X)|$ is a CW-complex, McCord's theorem implies
that $X$ is weakly contractible if and only if $|\mathcal{K}(X)|$
is contractible (by Whitehead's theorem~\cite{Whitehead1949}, a weakly contractible CW-complex is contractible), that is, if and only if $X$ is homotopically
trivial. These two terms are therefore used interchangeably
throughout. However, neither implies contractibility in the
finite-topology sense: a minimal space with more than one point is
never contractible in that sense, yet it may be homotopically
trivial. The spaces classified here are precisely those that are
minimal, homotopically trivial, and non-contractible.

\begin{remark}\label{rem:terminology}
Throughout this paper we adopt \emph{homotopically trivial} as the
canonical term: a finite $T_0$-space $X$ is homotopically trivial
if $|\mathcal{K}(X)|$ has the homotopy type of a point. By McCord's
theorem~\cite{McCord1966}, $|\mathcal{K}(X)|$ is weakly homotopy
equivalent to $X$, and since $|\mathcal{K}(X)|$ is a CW-complex,
Whitehead's theorem~\cite{Whitehead1949} gives the chain of
equivalences:
\begin{gather*}
X\text{ homotopically trivial}
\iff |\mathcal{K}(X)|\text{ contractible}\\
\iff X\text{ weakly contractible}
\iff \pi_n(X,x_0)=0\ \forall n\geq 1,\ X\text{ path-connected}.
\end{gather*}
The phrase \emph{weakly contractible} appears only when citing
\cite{CianciOttina2020,CianciOttina2018} directly, where it should
be read as a synonym. None of these expressions should be confused
with \emph{contractible in the finite-topology sense}, meaning that
$X$ deformation retracts to a point within the category of finite
$T_0$-spaces; by Stong's theorem~\cite{Stong1966}, this last
notion is equivalent to the minimal core of $X$ being a single
point, and is strictly stronger than homotopical triviality for
spaces with more than one point.
\end{remark}

A simplicial complex is \emph{collapsible} if it reduces to a
point by finitely many \emph{elementary collapses}: each collapse
removes a simplex $\sigma$ together with a face $\tau\subsetneq\sigma$
belonging to no other simplex (a \emph{free face} of $\sigma$).
Every collapsible complex is contractible.

\subsection{Notation for Height-Two Posets}\label{subsec:height2}

A \emph{chain} in $X$ is a totally ordered subset
$\{x_0<x_1<\cdots<x_k\}$; its \emph{length} is $k$, the number
of strict inequalities. The \emph{height} $h(X)$ is the maximum
length of a chain in $X$. Since every chain extends to a maximal
chain whose endpoints are a minimal and a maximal element of $X$,
we have $h(X)\geq 2$ whenever there exists a chain $c<b<a$ with
$c\in\operatorname{Min}(X)$ and $a\in\operatorname{Max}(X)$.

For a space of height two, write $C=\operatorname{Min}(X)$ for the
minimal elements, $A=\operatorname{Max}(X)$ for the maximal elements,
and $B=B_X=X\setminus(C\cup A)$ for the \emph{middle elements},
with $m=|C|$, $r=|B|$, $n=|A|$, $m+r+n=|X|$. For each $b_j\in B$
set
\[
L_j=\{c\in C:c<b_j\},\quad
U_j=\{a\in A:b_j<a\},\quad
\beta_j=|L_j|,\quad
\alpha_j=|U_j|.
\]
Minimality requires $\beta_j\geq 2$ and $\alpha_j\geq 2$ for all
$j$. Indeed, pick $b_k\leq b_j$ minimal in $B$; then
$\hat U_{b_k}=L_k\subseteq C$ (no element of $B$ or $A$ lies
below~$b_k$, since $b_k$ is minimal in~$B$ and $A$ is the maximal
layer). Since $C$ is an antichain, $L_k$ inherits the antichain
order. Then $|L_k|=0$ would force $b_k\in C$, contradicting
$b_k\in B$, and $|L_k|=1$ would make $b_k$ a down-beat point;
hence $\beta_k\geq 2$, and $L_k\subseteq L_j$ gives
$\beta_j\geq\beta_k\geq 2$. The dual argument gives $\alpha_j\geq 2$.
Write $C_{ca}=|\{(c,a)\in C\times A:c<a\}|$. When $B$ is an
antichain the Euler characteristic of $\mathcal{K}(X)$ satisfies
\begin{equation}\label{eq:1}
\chi(\mathcal{K}(X))
=(m+r+n)
-\Bigl(\sum_{j=1}^r(\beta_j+\alpha_j)+C_{ca}\Bigr)
+\sum_{j=1}^r\beta_j\alpha_j.
\end{equation}
Since $X$ is homotopically trivial, $\chi(\mathcal{K}(X))=1$.

\begin{convention}\label{conv:BBX}
In theorem statements we write $B_X$ for the middle set to emphasise
the ambient space; within proofs, once $X$ is fixed, we abbreviate
this to $B$. Similarly, $|B_X|$ and $|B|$ denote the same quantity.
The terms \emph{naked edge}, \emph{transitive edge}, the naked-edge
set $E$, and the non-negative integer $D$ are all introduced in
Definition~\ref{def:reach} below. A \emph{budget contradiction}
then refers to a configuration in which the forced lower bound on
$|E|$ (as computed from the derived lemmas of~\S\ref{sec:prelim})
strictly exceeds $D$; the formal accounting is recorded in
Convention~\ref{conv:distinct}.
\end{convention}

\begin{definition}\label{def:reach}
A pair $(c,a)\in C\times A$ with $c<a$ is \emph{transitive} if some
$b_j\in B$ satisfies $c<b_j<a$, and \emph{naked} otherwise. We write
\[
E\;=\;\{(c,a)\in C\times A : c<a \text{ and }(c,a)\text{ is naked}\}
\]
for the set of naked edges; $|E|$ is the number of naked edges.
For $c\in C$ define the \emph{upward transitive reach}
\[
M^{\uparrow}(c)\;=\;\bigcup_{\{j\,:\,c\in L_j\}}U_j\;\subseteq\;A,
\]
A pair $(c,a)$ with $c<a$ is naked if and only if $a\notin M^{\uparrow}(c)$;
equivalently, no middle $b_j$ with $b_j<a$ has $c\in L_j$.
Set $T_{\mathrm{tr}}=\sum_{c\in C}|M^{\uparrow}(c)|$ (the
\emph{transitive count}) and $D=C_{ca}-T_{\mathrm{tr}}$;
then $D=|E|$ equals the number of naked edges
(Convention~\ref{conv:distinct}).
For $a\in A$ and $c\in C$, define
\[
s(a)=|\{b\in B:b<a\}|,\qquad t(c)=|\{b\in B:c<b\}|.
\]
In particular, $s(a)=0$ implies every edge $(c,a)$ with $c<a$ is naked,
and dually for $t(c)=0$. Throughout the sequel we write $M(c)$ for
$M^{\uparrow}(c)$.

\end{definition}

\begin{remark}\label{rem:cycle-edges}
The terms \emph{naked} and \emph{transitive} are defined for $C$--$A$
pairs only. Multi-edge cycles in the case analyses also contain
$C$--$B$ edges $\{c,b_j\}$ (witnessed by $c\in L_j$) and $B$--$A$
edges $\{b_j,a\}$ (witnessed by $a\in U_j$); these are direct
$1$-simplices of $\mathcal{K}(X)$. The phrase ``others transitive''
should be read accordingly: every non-$C$--$A$ edge is a direct
$1$-simplex, and every other $C$--$A$ edge is transitive in the strict
sense (witness specified inline). Lemma~\ref{lem:naked} requires only
a naked $C$--$A$ edge with non-zero coefficient.
\end{remark}

\begin{remark}\label{rem:transitive-closure}
The sets $L_j$ and $U_j$ of Definition~\ref{def:reach} are defined
with respect to the poset order of $X$ (the transitive closure of
the covering relations); when $b_j<b_k$ in $B$, $U_j$ contains all
maximal elements above $b_k$ as well. This distinction is vacuous
under the height-two regime of Sections~\ref{sec:bx3}--\ref{sec:bx4}
(where $B$ is an antichain) but matters for the height-three cases.
\end{remark}

\subsection{Euler Characteristic with Comparable Middle
Elements}\label{subsec:euler3}

When $B$ contains comparable elements, formula~\eqref{eq:1} no
longer applies. The following lemma records the correct Euler
characteristic formula in this setting, which is needed in
Sections~\ref{sec:bx2}, \ref{sec:bx3}, and~\ref{sec:bx4}.

\begin{lemma}\label{lem:euler3}
Let $X$ be a minimal finite $T_0$-space with $m$ minimal elements,
$n$ maximal elements, and middle set $B=\{b_1,\ldots,b_r\}$. Assume
that every chain contained in $B$ has length at most one (i.e., $B$
contains no three-element chain $b_i<b_j<b_k$); equivalently, $h(X)\leq 3$.
Under this hypothesis, $\mathcal{K}(X)$ has no $4$-simplex (so $f_4=0$),
and the formulas below for $f_0,\ldots,f_3$ are exact. Let
$E_B=\{(i,k):b_i<b_k\text{ in }X\}$
be the set of cover relations (immediate comparabilities) within $B$.
Note that since every chain within $B$ has length at most one, all
comparabilities within $B$ are covers, so $E_B$ equivalently equals
the set of comparable pairs $(i,k)$ with $b_i<b_k$.
Define $\beta_j=|\{c\in C:c<b_j\}|$ and
$\alpha_j=|\{a\in A:b_j<a\}|$ for each $j$, counting all
comparabilities (transitive and direct). If $\chi(\mathcal{K}(X))=1$, then
\begin{equation}\label{eq:2}
\sum_{j=1}^r(\beta_j-1)(\alpha_j-1)
-\sum_{(i,k)\in E_B}(\beta_i-1)(\alpha_k-1)
=C_{ca}-(m+n-1).
\end{equation}
\end{lemma}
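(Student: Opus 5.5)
The plan is to compute $\chi(\mathcal{K}(X))$ directly from the face numbers $f_0,\dots,f_3$ by classifying chains according to how many elements they take from each of $C$, $B$, $A$. Then I set $\chi=1$ and regroup the terms into the products $(\beta_j-1)(\alpha_j-1)$ and $(\beta_i-1)(\alpha_k-1)$. No topology is needed beyond $\chi(\mathcal{K}(X))=\sum_k(-1)^kf_k$.

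First I classify the chains. Since $C$ and $A$ are antichains, a chain contains at most one element of $C$ and at most one of $A$. By hypothesis it contains at most two elements of $B$, and these form a pair in $E_B$. This already shows $f_4=0$, since a $5$-element chain would need three elements of $B$. I would also record two consequences of transitivity: for $(i,k)\in E_B$, every $c<b_i$ satisfies $c<b_k$, and every $a>b_k$ satisfies $a>b_i$. Hence a chain $c<b_i<b_k<a$ is determined by an arbitrary choice of $c\in L_i$ and $a\in U_k$, independently. This is the one point needing care, and it is also why $\beta_j,\alpha_j$ must count all comparabilities, as in Remark~\ref{rem:transitive-closure}.

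The face numbers are then:
\begin{itemize}
\item $f_0=m+r+n$.
\item $f_1=C_{ca}+\sum_j\beta_j+\sum_j\alpha_j+|E_B|$. The four summands count the edges of type $C$--$A$ (all comparable pairs, naked or transitive), $C$--$B$, $B$--$A$ and $B$--$B$.
\item $f_2=\sum_j\beta_j\alpha_j+\sum_{(i,k)\in E_B}(\beta_i+\alpha_k)$. These count the chains $c<b_j<a$, $c<b_i<b_k$ and $b_i<b_k<a$.
\item $f_3=\sum_{(i,k)\in E_B}\beta_i\alpha_k$.
\end{itemize}
There are no triangles of other shapes, since any $3$-chain must use at least one element of $B$.

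Finally I expand
\[
\sum_j(\beta_j-1)(\alpha_j-1)=\sum_j\beta_j\alpha_j-\sum_j\beta_j-\sum_j\alpha_j+r
\]
and
\[
\sum_{E_B}(\beta_i-1)(\alpha_k-1)=\sum_{E_B}\beta_i\alpha_k-\sum_{E_B}\beta_i-\sum_{E_B}\alpha_k+|E_B|.
\]
Substituting these into $f_0-f_1+f_2-f_3$ gives
\[
\chi(\mathcal{K}(X))=m+n-C_{ca}+\sum_j(\beta_j-1)(\alpha_j-1)-\sum_{E_B}(\beta_i-1)(\alpha_k-1).
\]
Setting this equal to $1$ yields \eqref{eq:2}.

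The main obstacle is not the algebra but the correctness of the chain count, in particular the $f_3$ term and the triangles through a comparable pair in $B$. I would double-check these using the containments $L_i\subseteq L_k$ and $U_k\subseteq U_i$, which guarantee that every chain in the count is counted exactly once.
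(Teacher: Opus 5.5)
Your proposal is correct and follows essentially the same route as the paper: you compute $f_0,\dots,f_3$ by sorting chains according to how many elements they take from $C$, $B$, $A$ (with $f_4=0$ because a five-element chain would need a three-element chain in $B$), then expand $\chi=f_0-f_1+f_2-f_3$ and regroup the terms into $(\beta_j-1)(\alpha_j-1)$ and $(\beta_i-1)(\alpha_k-1)$. Your explicit remark that $L_i\subseteq L_k$ and $U_k\subseteq U_i$ make the choices $c\in L_i$, $a\in U_k$ independent in a chain $c<b_i<b_k<a$ usefully justifies the $f_2$ and $f_3$ counts, which the paper states without comment.
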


\begin{proof}
A $k$-simplex of $\mathcal{K}(X)$ is a chain of $k+1$ elements in
$X$. Any chain uses at most one element of $C$ and at most one of
$A$, since these are the minimal and maximal layers; hence a chain
of five or more elements would contain at least three elements of
$B$, forming a chain $b_i<b_j<b_k$ of length two in $B$. By
hypothesis no such chain exists, so $f_4=0$ and the Euler formula
terminates at $f_3$.

We compute each $f_k$. The vertices are $f_0=m+r+n$. The edges
split into four types: $C$--$B$ edges ($\sum_j\beta_j$), $B$--$B$
edges ($|E_B|$), $B$--$A$ edges ($\sum_j\alpha_j$), and $C$--$A$
edges ($C_{ca}$), giving
\[
f_1=\sum_j\beta_j+|E_B|+\sum_j\alpha_j+C_{ca}.
\]
The triangles are chains of length two. Chains $c<b_j<a$ with
$c\in L_j$ and $a\in U_j$ contribute $\sum_j\beta_j\alpha_j$;
chains $c<b_i<b_k$ with $(i,k)\in E_B$ and $c\in L_i$ contribute
$\sum_{(i,k)\in E_B}\beta_i$; chains $b_i<b_k<a$ with $(i,k)\in E_B$
and $a\in U_k$ contribute $\sum_{(i,k)\in E_B}\alpha_k$. Thus
\[
f_2=\sum_j\beta_j\alpha_j+\sum_{(i,k)\in E_B}(\beta_i+\alpha_k).
\]
The tetrahedra are chains $c<b_i<b_k<a$ with $(i,k)\in E_B$,
$c\in L_i$, and $a\in U_k$, contributing
\[
f_3=\sum_{(i,k)\in E_B}\beta_i\alpha_k.
\]

Substituting into $\chi=f_0-f_1+f_2-f_3$ and regrouping:
\begin{align*}
\chi&=(m+r+n)-\Big(\sum_j\beta_j+|E_B|+\sum_j\alpha_j+C_{ca}\Big)\\
&\quad+\Big(\sum_j\beta_j\alpha_j+\sum_{(i,k)\in E_B}(\beta_i+\alpha_k)\Big)
-\sum_{(i,k)\in E_B}\beta_i\alpha_k.
\end{align*}
For each $j\in\{1,\ldots,r\}$ the contribution is
\[
-\beta_j-\alpha_j+\beta_j\alpha_j=(\beta_j-1)(\alpha_j-1)-1,
\]
and for each $(i,k)\in E_B$ the contribution is
\[
-1+\beta_i+\alpha_k-\beta_i\alpha_k=-(\beta_i-1)(\alpha_k-1).
\]
Collecting these and writing $m+r+n-C_{ca}$ for the terms independent
of $j$ and $E_B$:
\[
\chi=(m+r+n)-C_{ca}-r+\sum_{j=1}^r(\beta_j-1)(\alpha_j-1)
-\sum_{(i,k)\in E_B}(\beta_i-1)(\alpha_k-1).
\]
Setting $\chi=1$ and rearranging yields
\[
\sum_{j=1}^r(\beta_j-1)(\alpha_j-1)
-\sum_{(i,k)\in E_B}(\beta_i-1)(\alpha_k-1)
=C_{ca}-(m+n-1),
\]
which is~\eqref{eq:2}.
\end{proof}

\begin{remark}\label{rem:euler2}
When $E_B=\emptyset$, Lemma~\ref{lem:euler3} reduces
to~\eqref{eq:1}. Whenever comparable middles are present one must
use~\eqref{eq:2}, computing $\beta_j$ and $\alpha_j$ as the full
lower and upper counts from $C$ and $A$ respectively, including
elements reachable by transitivity through other middle elements.
\end{remark}

\subsection{Lemmas from Cianci and Ottina}

We record the following results from \cite{CianciOttina2020},
adapting only the notation.

\begin{lemma}[{\cite[Lemma~3.1]{CianciOttina2020}}]\label{lem:L31}
Let $X$ be a minimal finite $T_0$-space and let $a>b$ in $X$.
Then $|\hat{U}_a|\geq|\hat{U}_b|+2$ and
$|\hat{F}_b|\geq|\hat{F}_a|+2$.
\end{lemma}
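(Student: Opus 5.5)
We prove the first inequality, $|\hat U_a|\geq|\hat U_b|+2$; the second follows by applying it to the order-dual $X^{\mathrm{op}}$. The dual space is again minimal, because up-beat and down-beat points are interchanged. In $X^{\mathrm{op}}$ the relation $a>b$ becomes $b>a$, and $\hat U$ becomes $\hat F$.

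Since $b<a$, every element below $b$ is below $a$, and $b$ itself lies in $\hat U_a$ but not in $\hat U_b$. So
\[
\hat U_b\cup\{b\}=U_b\subseteq\hat U_a ,
\]
which already gives $|\hat U_a|\geq|\hat U_b|+1$. It remains to find one more element of $\hat U_a$ outside $U_b$.

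Suppose, for contradiction, that $\hat U_a=U_b$. Then $\hat U_a$ has the maximum $b$, so $a$ is a down-beat point. This contradicts the minimality of $X$, since a minimal space has no beat points. Hence $U_b\subsetneq\hat U_a$, and since $|U_b|=|\hat U_b|+1$ we get $|\hat U_a|\geq|U_b|+1=|\hat U_b|+2$.

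The only point needing care is that the contradiction uses exactly the definition of a down-beat point ($\hat U_x$ has a maximum), and that minimality is preserved under passing to $X^{\mathrm{op}}$. Both are immediate from the definitions, so no real obstacle is expected.
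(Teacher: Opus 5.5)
Your proposal is correct and matches the paper's proof: both note $U_b=\hat{U}_b\cup\{b\}\subseteq\hat{U}_a$, rule out equality because $b$ would then be $\max(\hat{U}_a)$ and make $a$ a down-beat point, and obtain the second inequality by passing to $X^{\mathrm{op}}$.
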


Since $|\hat{U}_x|=|U_x|-1$ and $|\hat{F}_x|=|F_x|-1$, this is
equivalent to $|U_a|\geq|U_b|+2$ and $|F_b|\geq|F_a|+2$.

\begin{proof}
We prove $|\hat{U}_a|\geq|\hat{U}_b|+2$; the second inequality
$|\hat{F}_b|\geq|\hat{F}_a|+2$ follows by order duality (replace $X$
by $X^{\mathrm{op}}$).
Since $b<a$, every $y\in\hat{U}_b$ satisfies $y<b<a$, giving
$\hat{U}_b\subseteq\hat{U}_a$. Moreover $b\in\hat{U}_a$ and
$b\notin\hat{U}_b$, so $|\hat{U}_a|\geq|\hat{U}_b|+1$. If
equality held then $\hat{U}_a=\hat{U}_b\cup\{b\}$, so every
element of $\hat{U}_b$ lies below $b$ and $b=\max(\hat{U}_a)$,
making $a$ a down-beat point: contradiction.
\end{proof}

\begin{lemma}[{\cite[Lemma~3.3]{CianciOttina2020}}]\label{lem:L33}
Let $X$ be a homotopically trivial finite $T_0$-space with $h(X)=2$
and let $b,b'\in B$ be distinct. If $|F_b\cap F_{b'}|\geq 2$ then
$|U_b\cap U_{b'}|\leq 1$.
\end{lemma}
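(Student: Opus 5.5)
Assume for contradiction that $b\neq b'$ in $B$ satisfy $|F_b\cap F_{b'}|\geq 2$ and $|U_b\cap U_{b'}|\geq 2$, and exhibit a non-trivial $1$-cycle in $\mathcal{K}(X)$. Since $h(X)=2$, the set $B$ is an antichain (a comparability $b<b'$ together with elements of $C$ below and $A$ above would give a chain of length three). Hence $b,b'$ are incomparable, so $F_b\cap F_{b'}\subseteq A$ and $U_b\cap U_{b'}\subseteq C$. Choose distinct $a_1,a_2\in U_b\cap U_{b'}\cap A$ above both and distinct $c_1,c_2\in C$ below both.

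The four elements $c_1,c_2,a_1,a_2$ together with $b,b'$ span a suspension-like configuration. The join $\{c_1,c_2\}*\{b,b'\}*\{a_1,a_2\}$ would be an octahedron, i.e.\ a $2$-sphere, and every one of its triangles $c_i<b<a_j$ is a chain, so it lies in $\mathcal{K}(X)$. The plan is to show this $2$-sphere carries non-zero homology in $\mathcal{K}(X)$, contradicting homotopical triviality.

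Since $\mathcal{K}(X)$ has dimension $2$ (height two), $H_2(\mathcal{K}(X))=Z_2$ with no boundaries to quotient by. The octahedral cycle $z=\sum_{i,j}\pm\bigl([c_i,b,a_j]-[c_i,b',a_j]\bigr)$, with signs chosen as in the join orientation, is a genuine $2$-cycle. Its boundary cancels, because each edge $c_ib$, $ba_j$ and $c_ia_j$ appears in exactly two of its triangles with opposite signs. It is nonzero as a chain, so $H_2(\mathcal{K}(X);\mathbb{Z})\neq 0$. This contradicts $\widetilde H_*(\mathcal{K}(X))=0$, which holds because $\mathcal{K}(X)$ is contractible.

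The main point to check is the orientation bookkeeping. The key observation is that the edge $c_ia_j$ lies in the triangles $[c_i,b,a_j]$ and $[c_i,b',a_j]$, and these enter $z$ with opposite signs. With that settled, no hypothesis beyond height two is needed; minimality is not used.
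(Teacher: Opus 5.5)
Your proof is correct. It needs two small fixes.

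The paper does not prove this lemma; it quotes it from Cianci--Ottina. Your octahedral $2$-cycle is, however, the same device the paper uses in Pattern~6C of the proof of Theorem~\ref{thm:bx3}. There the chain $Y$ carries signs $(-1)^{i+j+k}$ on $\{c_2,c_3\}\times\{b_2,b_3\}\times\{a_3,a_4\}$, and that case is an instance of Lemma~\ref{lem:L33}: two middles sharing two lower and two upper neighbours. Your argument also shows that the lemma needs only $h(X)=2$ and $H_2(\mathcal{K}(X);\mathbb{Z})=0$; contractibility and minimality play no role.

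The two fixes:
\begin{enumerate}
\item The common upper bounds $a_1,a_2$ should be taken in $F_b\cap F_{b'}$. In the statement's notation $U_b$ is the down-set of $b$; you have borrowed the height-two $U_j$, which denotes upper neighbours in $A$.
\item The signs should be written out explicitly as
\[
z=\sum_{i,j\in\{1,2\}}(-1)^{i+j}\bigl([c_i,b,a_j]-[c_i,b',a_j]\bigr).
\]
The factor $(-1)^{i+j}$ cancels the edges $[c_i,b]$ and $[b,a_j]$ (and their primed versions), while the difference between the $b$- and $b'$-triangles cancels each $[c_i,a_j]$.
\end{enumerate}
With these, $\partial z=0$, $z\neq 0$, and $H_2=\ker\partial_2$ by dimension, exactly as you say.
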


In the height-two setting, $|F_b\cap F_{b'}|$ counts the common
upper neighbours of $b$ and $b'$ in $A$, and $|U_b\cap U_{b'}|$
counts their common lower neighbours in $C$.

\begin{definition}[{\cite[Def.\,2.7.1]{Barmak2011}}]\label{def:suspension}
The \emph{non-Hausdorff join} $X\circledast Y$ of two finite
$T_0$-spaces $X$ and $Y$ is the finite $T_0$-space on the disjoint
union $X\sqcup Y$ with the given order preserved on $X$ and $Y$
and $x<y$ for every $x\in X$ and $y\in Y$. The
\emph{non-Hausdorff suspension} of $X$ is
\[
\mathbb{S}(X):=X\circledast S^0,
\]
where $S^0=\{p_1,p_2\}$ is the two-point antichain. Concretely,
$\mathbb{S}(X)=X\cup\{p_1,p_2\}$ with $x<p_1$ and $x<p_2$ for every
$x\in X$, with $p_1\parallel p_2$, and no comparabilities among
elements of $X$ beyond those inherited from $X$. In particular,
$p_1$ and $p_2$ are the two maximal elements of $\mathbb{S}(X)$,
and $\operatorname{Min}(\mathbb{S}(X))=\operatorname{Min}(X)$.
\end{definition}

\begin{lemma}[{\cite[Lemma~3.4]{CianciOttina2020}}]\label{lem:L34}
Let $X$ be a topological space such that $\pi_1(X,x_0)$ is not a
non-trivial perfect group for some $x_0\in X$. If $\mathbb{S}(X)$
is homotopically trivial then $X$ is homotopically trivial.
\end{lemma}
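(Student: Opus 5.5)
The plan is to pass from the non-Hausdorff suspension $\mathbb{S}(X)$ to the classical unreduced suspension $\Sigma X$, and then read off the homotopy type of $X$ from homology together with the hypothesis on $\pi_1$.

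\emph{Step 1: compare $\mathbb{S}(X)$ with $\Sigma X$.} I would use McCord's theorem~\cite{McCord1966} that for every topological space $X$ there is a weak homotopy equivalence $\Sigma X\to\mathbb{S}(X)$. Geometrically, the two cone points of $\Sigma X$ map to $p_1$ and $p_2$, and the open cylinder maps to $X$. When $X$ is itself a finite $T_0$-space one can see this combinatorially. Chains of $\mathbb{S}(X)$ are chains of $X$, possibly extended by exactly one of $p_1,p_2$. Hence
\[
\mathcal{K}(\mathbb{S}(X))=\mathcal{K}(X)*S^0,
\]
the simplicial suspension of $\mathcal{K}(X)$. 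Since $\mathbb{S}(X)$ is homotopically trivial, $\Sigma X$ is weakly contractible, and therefore $\widetilde{H}_*(\Sigma X;\mathbb{Z})=0$ in singular homology.

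\emph{Step 2: homology of $X$.} I would apply Mayer--Vietoris to $\Sigma X$, covered by its two open cones, each contractible, whose intersection is homotopy equivalent to $X$. This gives the suspension isomorphism $\widetilde{H}_n(X)\cong\widetilde{H}_{n+1}(\Sigma X)=0$ for all $n\geq 0$. In particular $\widetilde{H}_0(X)=0$, so $X$ is nonempty and path-connected. We also get $H_1(X)=0$ and $\widetilde{H}_n(X)=0$ for all $n$.

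\emph{Step 3: fundamental group and higher homotopy.} Since $X$ is path-connected, $\pi_1(X,x)$ is independent of the basepoint up to isomorphism. By Hurewicz, $\pi_1(X,x)^{\mathrm{ab}}\cong H_1(X)=0$, so $\pi_1$ is perfect. The hypothesis that $\pi_1(X,x_0)$ is not a non-trivial perfect group then forces $\pi_1(X,x)=1$ for every $x$. Now $X$ is simply connected with vanishing reduced integral homology. The Hurewicz theorem, applied inductively to the lowest possibly non-trivial $\pi_n$ with $n\geq2$, gives $\pi_n(X)=0$ for all $n$. So $X$ is weakly contractible, which is homotopically trivial in the sense of Remark~\ref{rem:terminology}. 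If one wants this phrased via an order complex when $X$ is finite, use McCord once more to conclude that $|\mathcal{K}(X)|$ is contractible by Whitehead.

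\emph{Main obstacle.} The substantive input is Step~1 for an arbitrary topological space $X$, namely McCord's weak equivalence $\Sigma X\simeq_w\mathbb{S}(X)$. This requires his basis-like open cover criterion, since $\mathbb{S}(X)$ is non-Hausdorff. I would cite it rather than reprove it. In the finite $T_0$ case actually used in this paper, it reduces to the explicit identification $\mathcal{K}(\mathbb{S}(X))=\mathcal{K}(X)*S^0$ above. The remaining steps are standard algebraic topology. The only point needing care is that the perfect-group hypothesis at one basepoint transfers to all basepoints, which follows from path-connectedness.
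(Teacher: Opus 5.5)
Your proof is correct and follows essentially the same route as the original argument of Cianci and Ottina, which the paper cites without reproducing: McCord's weak equivalence $\Sigma X\to\mathbb{S}(X)$, the suspension isomorphism $\widetilde{H}_n(X)\cong\widetilde{H}_{n+1}(\Sigma X)$, perfectness and hence triviality of $\pi_1$, and then Hurewicz. One small correction: nonemptiness of $X$ (which the reduced Mayer--Vietoris sequence needs) comes from the hypothesis $x_0\in X$, not from $\widetilde{H}_0(X)=0$, since the latter also holds for $X=\emptyset$ under the usual conventions.
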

\begin{proposition}[{\cite[Proposition~3.5]{CianciOttina2020}}]\label{prop:P35}
A homotopically trivial finite $T_0$-space with $h(X)\leq 1$ is
contractible.
\end{proposition}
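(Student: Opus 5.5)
The plan is to work entirely in the poset $X$ and use McCord's theorem to convert homotopical triviality into contractibility of $|\mathcal{K}(X)|$. A finite $T_0$-space with $h(X)\leq 1$ has all its chains of length at most one. Hence $\mathcal{K}(X)$ is a graph, namely the Hasse diagram of $X$ viewed as a $1$-dimensional simplicial complex. By McCord's theorem and Remark~\ref{rem:terminology}, homotopical triviality of $X$ means this graph is contractible, i.e.\ it is a tree. The goal is then to show that a finite poset of height at most one whose comparability graph is a tree has a beat point whenever it has more than one element. Iterating beat point removal then reduces $X$ to a point, which by Stong's theorem gives contractibility.

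The key steps, in order, are as follows.

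\begin{enumerate}[label=\textup{(\arabic*)}]
\item Homotopical triviality forces $X$ to be nonempty and connected. If $|X|=1$ there is nothing to prove.
\item If $h(X)=0$, then $X$ is a connected antichain, hence a single point.
\item Suppose $h(X)=1$. The graph $\mathcal{K}(X)$ is a finite tree with at least two vertices, so it has a leaf $x$ with a unique neighbour $y$.
\item If $x<y$, then $x$ is minimal and $\hat{F}_x=\{y\}$ has a minimum, so $x$ is an up-beat point. If $x>y$, dually $x$ is a down-beat point.
\item Remove $x$. The result $X\setminus\{x\}$ is a strong deformation retract of $X$ by \cite[Prop.\,1.3.4]{Barmak2011}, so it is again homotopically trivial. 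Its height is still at most one, and its order complex is the tree with the leaf deleted.
\item Induct on $|X|$. Equivalently, note that a minimal space of height at most one with tree Hasse diagram must be a single point, so the core of $X$ is a point and $X$ is contractible.
\end{enumerate}

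The only step needing care is the identification of $\mathcal{K}(X)$ with a graph that must be a tree. This uses that a contractible $1$-dimensional CW complex is a tree: the graph is connected, and it is acyclic because $H_1=0$, which one can also phrase as $\chi=1$ for a connected graph. Everything else is an elementary leaf argument, so I expect no genuine obstacle.
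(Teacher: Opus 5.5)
Your proof is correct and complete. Since $h(X)\le 1$, the order complex is exactly the Hasse diagram, and contractibility forces it to be a tree. A leaf $x$ with unique neighbour $y$ then has $\hat F_x=\{y\}$ or $\hat U_x=\{y\}$, so it is a beat point. Deleting it leaves a tree of height at most one, so induction (equivalently, Stong's core theorem) gives contractibility.

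The paper does not prove this proposition itself; it imports it from Cianci--Ottina. Your leaf/beat-point argument is the standard self-contained proof. One cosmetic point: McCord's theorem is not needed, since in this paper homotopical triviality is defined directly as contractibility of $|\mathcal{K}(X)|$.
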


Taking the contrapositive: if $X$ is homotopically trivial and
non-contractible then $h(X)\geq 2$.

\subsection{Derived Lemmas}

Lemmas~\ref{lem:iso},~\ref{lem:beat}, and~\ref{lem:fullL} are the
primary elimination tools in the classification, each forcing a
beat-point contradiction from the structure of $\hat{U}_a$ when
$s(a)\in\{0,1\}$ (or the dual at $c\in C$ when $t(c)\in\{0,1\}$).
Lemmas~\ref{lem:naked} and~\ref{lem:template} then provide the
non-bounding $1$-cycle arguments used whenever the beat-point
route does not apply. All five follow from the minimality
hypothesis together with the structure of the order complex.

\begin{lemma}\label{lem:iso}
Let $X$ be a minimal homotopically trivial finite $T_0$-space, and
let $a\in A$. If $s(a)=0$ then at least two elements of $C$ lie
below $a$. Dually, if $t(c)=0$ for some $c\in C$ then at least
two elements of $A$ lie above $c$.
\end{lemma}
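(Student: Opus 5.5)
The plan is to prove the first assertion by ruling out the two small cases $|\hat U_a|=0$ and $|\hat U_a|=1$ directly from minimality, and then to obtain the second assertion by passing to $X^{\mathrm{op}}$. Fix $a\in A$ with $s(a)=0$. By definition of $s(a)$, no middle element lies below $a$. Since $A$ is the maximal layer, no other maximal element lies below $a$ either. (If $a'<a$ with $a'\in A$, then $a'$ would not be maximal.) Hence
\[
\hat U_a=\{c\in C: c<a\}.
\]
So it suffices to show that $|\hat U_a|\geq 2$.

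First I rule out $\hat U_a=\emptyset$. In that case $a$ is minimal as well as maximal, so $a$ is an isolated point of $X$. Since $X$ is homotopically trivial, $|\mathcal K(X)|$ is contractible, hence connected, and so $X$ is connected. A connected space with an isolated point must be that single point. But then $|X|=1$, which is incompatible with the standing situation: we have $h(X)\geq 2$ by Proposition~\ref{prop:P35}, or more simply $|X|=10$. Even if one only wants the lemma for general minimal non-contractible $X$, Proposition~\ref{prop:P35} still excludes $|X|=1$.

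Next I rule out $\hat U_a=\{c\}$ for a single $c$. Then $\hat U_a$ has maximum $c$, so $a$ is a down-beat point. This contradicts the minimality of $X$, which forbids beat points, as recalled after \cite[Prop.\,1.3.4]{Barmak2011}. Therefore at least two elements of $C$ lie below $a$.

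For the dual statement, I apply the argument to $X^{\mathrm{op}}$. This space is again minimal, since up-beat and down-beat points are interchanged. It is also homotopically trivial, because $\mathcal K(X^{\mathrm{op}})=\mathcal K(X)$. Passing to $X^{\mathrm{op}}$ swaps the roles of $C$ and $A$ and turns $t(c)$ into $s$. The main point requiring care is the degenerate empty case, which relies on connectedness rather than on beat points. This is routine, and I expect no real obstacle.
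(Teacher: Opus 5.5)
Your proof is correct and follows the paper's argument: the case $\hat U_a=\emptyset$ is excluded by connectedness of the contractible complex $\mathcal{K}(X)$, the case $\hat U_a=\{c\}$ is excluded because $a$ would be a down-beat point, and the dual statement follows by passing to $X^{\mathrm{op}}$. You are slightly more careful than the paper, which claims an isolated vertex contradicts path-connectedness without noting that this fails when $X$ is a single point. The lemma as literally stated does not exclude that case, and you correctly dispose of it using the standing hypotheses ($|X|=10$, or non-contractibility).
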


\begin{proof}
Suppose $s(a)=0$. If no $c\in C$ lies below $a$, then $a$ is an
isolated vertex of $\mathcal{K}(X)$, contradicting path-connectedness
of the contractible complex $\mathcal{K}(X)$. If exactly one
$c_0\in C$ satisfies $c_0<a$, then $\hat{U}_a=\{c_0\}$, so
$c_0=\max(\hat{U}_a)$ and $a$ is a down-beat point: contradiction.
Hence at least two elements of $C$ lie below $a$.
\end{proof}

\begin{lemma}\label{lem:beat}
Let $X$ be a minimal finite $T_0$-space. If $a\in A$
satisfies $s(a)=1$ with $b_k$ the unique middle below $a$, then at
least one element of $C\setminus L_k$ lies below $a$. Dually, if
$c\in C$ satisfies $t(c)=1$ with $b_k$ the unique middle above $c$,
then at least one element of $A\setminus U_k$ lies above $c$.
\end{lemma}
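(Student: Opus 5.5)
The plan is to argue by contradiction from minimality, much as in the proof of Lemma~\ref{lem:iso}. Suppose $a\in A$ has $s(a)=1$, let $b_k$ be the unique middle element below $a$, and suppose no element of $C\setminus L_k$ lies below $a$. The aim is to show that $\hat U_a$ then has maximum $b_k$, so that $a$ is a down-beat point, which minimality forbids.

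The key step is to describe $\hat U_a$ exactly. Since $a$ is maximal and $A$ is the maximal layer, nothing in $A$ lies strictly below $a$. Hence $\hat U_a$ consists of $b_k$ together with the elements of $C$ below $a$. These elements of $C$ are of two kinds: those in $L_k$, and those in $C\setminus L_k$. The second kind is empty by assumption, so
\[
\hat U_a=\{b_k\}\cup\{c\in C:c<a\}=\{b_k\}\cup L_k .
\]
Here $L_k\subseteq\hat U_a$ automatically, since $c<b_k<a$.

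Every element of $L_k$ lies below $b_k$ by definition, so $b_k=\max(\hat U_a)$ and $a$ is a down-beat point. This contradicts the minimality of $X$, so some element of $C\setminus L_k$ must lie below $a$.

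One point needs care: whether $L_k$ should count all $c<b_k$ or only covers. By Remark~\ref{rem:transitive-closure}, $L_k$ is defined with respect to the full order, so every element of $C$ below $b_k$ belongs to $L_k$. Moreover, if $B$ is not an antichain, any middle element lying below $b_k$ also lies below $a$ and would contradict $s(a)=1$. So $b_k$ is the only middle element in $\hat U_a$, and the description of $\hat U_a$ above holds without any height-two assumption.

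The dual statement, for $c\in C$ with $t(c)=1$ and unique middle element $b_k$ above $c$, follows by applying the argument to $X^{\mathrm{op}}$. Under this duality up-beat and down-beat points interchange, $U_k$ and $L_k$ interchange, and $\hat F_c=\{b_k\}\cup U_k$ would have minimum $b_k$. I expect no real obstacle; the only subtle step is confirming that $\hat U_a$ contains no element outside $\{b_k\}\cup C$, which is handled above.
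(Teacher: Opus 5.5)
Your proof is correct and is the same argument as the paper's: assuming no element of $C\setminus L_k$ lies below $a$ forces $\hat U_a=\{b_k\}\cup L_k$ with maximum $b_k$, so $a$ is a down-beat point, and the dual case uses $\hat F_c=\{b_k\}\cup U_k$ with minimum $b_k$. Your added remark is a harmless clarification the paper leaves implicit: $s(a)=1$ already excludes every other middle element from $\hat U_a$, so no height-two or antichain assumption is needed.
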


\begin{proof}
Suppose $b_k<a$ and no element of $C\setminus L_k$ lies below $a$.
Then $\hat{U}_a=\{b_k\}\cup L_k$. Since every element of $L_k$
lies below $b_k$, we have $b_k=\max(\hat{U}_a)$, making $a$
a down-beat point: contradiction. For the dual statement, suppose
$c<b_k$ and no element of $A\setminus U_k$ lies above $c$. Then
$\hat{F}_c=\{b_k\}\cup U_k$. Since every element of $U_k$ lies
above $b_k$, we have $b_k=\min(\hat{F}_c)$, making $c$ an up-beat
point: contradiction.
\end{proof}

\begin{lemma}\label{lem:fullL}
Let $X$ be a minimal finite $T_0$-space with $B\neq\emptyset$.
If $L_k=C$ for some $b_k\in B$ and $a\in A$ satisfies $s(a)=1$ with
$b_k$ the unique middle below $a$, then $a$ is a down-beat point,
contradicting minimality.
\end{lemma}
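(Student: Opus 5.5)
The plan is to argue by direct inspection of $\hat{U}_a$, in the same manner as Lemma~\ref{lem:beat}; the only new input is the hypothesis $L_k=C$. First I will determine all elements of $\hat{U}_a$. Any $y<a$ lies in $C$, in $B$, or in $A$. It cannot lie in $A$: the elements of $A$ are maximal, so no element of $A$ is strictly below another. If $y\in B$ then $y$ is a middle element below $a$. Since $s(a)=1$ with $b_k$ the unique such middle, this forces $y=b_k$. If $y\in C$ then $y\in C=L_k$, so $y<b_k$. Hence
\[
\hat{U}_a\subseteq\{b_k\}\cup C=\{b_k\}\cup L_k .
\]
Conversely $b_k<a$, and every $c\in L_k$ satisfies $c<b_k<a$, so equality holds.

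Next I will conclude that $b_k=\max(\hat{U}_a)$. Every other element of $\hat{U}_a$ lies in $L_k$ and is therefore below $b_k$. So $a$ is a down-beat point, which contradicts minimality. This is exactly the situation of Lemma~\ref{lem:beat}: the hypothesis $L_k=C$ makes $C\setminus L_k$ empty, so no element of $C\setminus L_k$ can lie below $a$. One could therefore simply cite that lemma, but I will write out the short direct check as well.

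I expect no real obstacle. The one point needing care is the use of Remark~\ref{rem:transitive-closure} when $B$ is not an antichain. We need every middle element below $a$ to be counted by $s(a)$, and every $c<b_k$ to be counted in $L_k$. Both hold because $s(a)$ and $L_k$ are defined through the poset order itself, not through covers. In particular, no middle $b_i<b_k$ can sneak into $\hat{U}_a$: such a $b_i$ would also lie below $a$ and so would be counted in $s(a)$, contradicting $s(a)=1$.
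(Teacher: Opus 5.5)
Your proof is correct and follows the paper's argument: you identify $\hat{U}_a=\{b_k\}\cup C$ using $s(a)=1$ and the maximality of $A$, then conclude $b_k=\max(\hat{U}_a)$ because $L_k=C$. The paper also notes that this is the degenerate case of Lemma~\ref{lem:beat} with $C\setminus L_k=\emptyset$.
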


\begin{proof}
If $L_k=C$ and $b_k<a$ with $s(a)=1$, then
$\hat{U}_a=\{b_k\}\cup C$. Every $c\in C=L_k$ satisfies $c<b_k$,
so $b_k=\max(\hat{U}_a)$: $a$ is a down-beat point. Note that
Lemma~\ref{lem:beat} would require a rescue element in
$C\setminus L_k=\emptyset$, which does not exist.
\end{proof}

\begin{lemma}\label{lem:naked}
Let $X$ be a minimal finite $T_0$-space and let $\{c,a\}$ be a
naked $C$--$A$ edge in $\mathcal{K}(X)$. Then $\{c,a\}$ belongs
to no triangle boundary in $\mathcal{K}(X)$. Consequently, any
$1$-cycle $Z$ in $\mathcal{K}(X)$ assigning a nonzero coefficient
to $[c,a]$ is not a boundary, so $[Z]\neq 0$ in
$H_1(\mathcal{K}(X);\mathbb{Z})$, and $X$ is not homotopically
trivial.
\end{lemma}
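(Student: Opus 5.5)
The plan is to prove the first assertion directly from the definition of $\mathcal{K}(X)$, and then to read off the homological consequences from the boundary map.

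\emph{Step 1: no triangle contains $\{c,a\}$.} Suppose $\{c,a\}$ is an edge of a $2$-simplex $\sigma=\{x,y,z\}$ of $\mathcal{K}(X)$. Then $\sigma$ is a three-element chain containing both $c$ and $a$, so $\sigma=\{c,a,x\}$ for a third element $x$ comparable with both.

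\begin{itemize}
\item Since $c\in C=\operatorname{Min}(X)$, nothing lies strictly below $c$, so $x>c$.
\item Since $a\in A=\operatorname{Max}(X)$, nothing lies strictly above $a$, so $x<a$.
\end{itemize}

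Thus $c<x<a$. Then $x$ is neither minimal (it lies above $c$) nor maximal (it lies below $a$), so $x\in B$, say $x=b_j$. By Definition~\ref{def:reach}, with $L_j,U_j$ computed in the full order as in Remark~\ref{rem:transitive-closure}, we get $c\in L_j$ and $a\in U_j$. So $a\in M(c)$ and the pair $(c,a)$ is transitive, contradicting nakedness.

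No height-two assumption is needed here. The argument only uses that $c$ is minimal and $a$ is maximal, and this is the one place where the level of generality has to be checked carefully.

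\emph{Step 2: $Z$ is not a boundary.} For any $2$-chain $W=\sum_\sigma n_\sigma\sigma$, the boundary $\partial_2W$ is a signed sum of edges of the $2$-simplices $\sigma$. By Step~1, none of these edges equals $\{c,a\}$. Hence the coefficient of $[c,a]$ in every element of $\operatorname{im}\partial_2$ is $0$. A $1$-cycle $Z$ with nonzero coefficient on $[c,a]$ therefore lies outside $\operatorname{im}\partial_2$, so $[Z]\neq0$ in $H_1(\mathcal{K}(X);\mathbb{Z})$.

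\emph{Step 3: $X$ is not homotopically trivial.} Since $H_1(\mathcal{K}(X);\mathbb{Z})\neq0$, the complex $|\mathcal{K}(X)|$ is not contractible. By the definition in Remark~\ref{rem:terminology}, $X$ is therefore not homotopically trivial.

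There is no real obstacle in this argument. The only subtlety is the one flagged in Step~1: confirming that the intermediate element $x$ must lie in $B$ and satisfy $c\in L_j$, $a\in U_j$ even when $B$ is not an antichain. This holds because $L_j$ and $U_j$ are defined via the full poset order.
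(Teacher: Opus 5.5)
Your proof is correct and follows the paper's argument: a $2$-simplex containing both $c$ and $a$ must be a chain $c<x<a$ with $x\in B$, which contradicts nakedness, so $[c,a]$ has zero coefficient in every boundary. Your version is slightly more careful than the paper's in two places: you treat an arbitrary $2$-chain $W$ rather than the paper's single simplex $\sigma$, and you derive $c<x<a$ directly from the minimality of $c$ and the maximality of $a$ instead of listing the extra triangle types that occur at larger heights.
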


\begin{proof}
Every triangle of $\mathcal{K}(X)$ is a chain $x<y<z$ in $X$. Its
boundary contains the edge $\{x,z\}$. For $\{c,a\}$ to be a
boundary edge of such a triangle, we would need $x=c$ and $z=a$
together with some $y\in X$ with $c<y<a$. Since $c\in C$ and
$a\in A$, the levels of $X$ force $y\in B$; thus the triangle is
$c<y<a$ with $y\in B$, witnessing transitivity of $\{c,a\}$ and
contradicting the assumption that $\{c,a\}$ is naked. (At
heights greater than two, $\mathcal{K}(X)$ also contains triangles
of the form $c<b<b'$ with $b,b'\in B$, or $b<b'<a$ with $b,b'\in
B$; the boundaries of these triangles contain only $C$--$B$ or
$B$--$A$ edges, not the $C$--$A$ edge $\{c,a\}$, so they do not
affect the argument.) Hence $\{c,a\}$ lies in no triangle boundary.
If $Z=\partial_2\sigma$, the coefficient of $[c,a]$ in $Z$ would
be zero: contradiction.
\end{proof}

\begin{lemma}\label{lem:template}
Let $X$ be a minimal finite $T_0$-space. Let $a^*\in A$, and let
$c_i,c_j\in C$ be any two distinct elements satisfying $c_i<a^*$
and $c_j<a^*$ in $X$, with at least one of these edges naked. If
there exists $a'\in A$ such that $c_i<a'$ and $c_j<a'$ in $X$, then
\[
Z=[c_i,a^*]-[c_j,a^*]+[c_j,a']-[c_i,a']
\]
satisfies $\partial Z=0$ and $[Z]\neq 0$ in
$H_1(\mathcal{K}(X);\mathbb{Z})$.
\end{lemma}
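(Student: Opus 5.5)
The plan is to verify the two claims about $Z$ separately: first that it is a cycle, then that it is not a boundary, with the second claim reduced entirely to Lemma~\ref{lem:naked}. Throughout I orient each edge of $\mathcal{K}(X)$ from its smaller to its larger endpoint, so $[x,y]$ with $x<y$ has $\partial[x,y]=y-x$. All four pairs $\{c_i,a^*\}$, $\{c_j,a^*\}$, $\{c_j,a'\}$, $\{c_i,a'\}$ are two-element chains in $X$ by hypothesis, so they are $1$-simplices of $\mathcal{K}(X)$ and $Z$ is a genuine $1$-chain.

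For the cycle condition I compute
\[
\partial Z=(a^*-c_i)-(a^*-c_j)+(a'-c_j)-(a'-c_i).
\]
The terms cancel in pairs, giving $\partial Z=0$. This holds whatever $a'$ is.

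For non-triviality I first need the four edges in $Z$ to be distinct, so that no coefficient cancels. Distinctness of $c_i$ and $c_j$ is given. I must also use $a'\neq a^*$, which I read as implicit in the hypothesis: if $a'=a^*$ then $Z=0$ and the claim fails. Given $a'\neq a^*$, the four edges are pairwise distinct. Hence the naked edge among $\{c_i,a^*\}$ and $\{c_j,a^*\}$ appears in $Z$ with coefficient $\pm1$, in particular nonzero.

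Lemma~\ref{lem:naked} then applies directly. The naked $C$--$A$ edge lies in no triangle boundary, so any boundary $\partial_2\sigma$ has zero coefficient on it, while $Z$ does not. Therefore $Z$ is not a boundary, and $[Z]\neq0$ in $H_1(\mathcal{K}(X);\mathbb{Z})$.

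The only step needing care is the coefficient bookkeeping, namely making the requirement $a'\neq a^*$ explicit. Everything else is a direct invocation of Lemma~\ref{lem:naked}. Note that neither the transitivity status of the other three edges nor minimality of $X$ is needed beyond what Lemma~\ref{lem:naked} already uses.
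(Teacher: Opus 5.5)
Your proof is correct and follows the paper's argument: check $\partial Z=0$ by pairwise cancellation, then apply Lemma~\ref{lem:naked} to the naked edge, which carries a nonzero coefficient in $Z$. Your point that $a'\neq a^*$ must be assumed is well taken, since otherwise $Z=0$; the statement omits this hypothesis, and the paper's proof uses it tacitly when it asserts that the naked edge has non-zero coefficient.
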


\begin{proof}
The boundary $\partial Z=(a^*-c_i)-(a^*-c_j)+(a'-c_j)-(a'-c_i)=0$.
The hypothesis gives a naked $C$--$A$ edge with non-zero coefficient
in $Z$ (either $\{c_i,a^*\}$ or $\{c_j,a^*\}$). By
Lemma~\ref{lem:naked}, $[Z]\neq 0$.
\end{proof}

\begin{remark}\label{rem:template}
Lemma~\ref{lem:template} applies regardless of which specific pair
$(c_i,c_j)$ is guaranteed by Lemma~\ref{lem:iso} or
Lemma~\ref{lem:beat}: provided the two forced elements share a
common upper bound $a'\in A$ that both reach in $X$ (transitively
or directly), the cycle exists and is non-bounding. In every application below, we verify explicitly: (i)~the existence of such $a'$ for any admissible pair $(c_i,c_j)$; and (ii)~the nakedness of the key edge $\{c_i,a^*\}$, which holds either because $s(a^*)=0$ (no transitive path through any $b_j$) or because $c_i\notin L_k$ for every $k$ with $b_k<a^*$.
\end{remark}

\begin{lemma}\label{lem:size}
For a homotopically trivial non-contractible minimal finite
$T_0$-space with $|X|=10$, one has $|C|\geq 3$, $|A|\geq 3$,
and $|B_X|\leq 4$.
\end{lemma}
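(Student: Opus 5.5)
We plan to prove the three inequalities separately, all resting on two facts: Proposition~\ref{prop:P35} forces $h(X)\geq 2$, and minimality forbids beat points.

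\textbf{Step 1: $|C|\geq 3$ and $|A|\geq 3$.} By order duality (the hypotheses are invariant under $X\mapsto X^{\mathrm{op}}$, and $\mathcal{K}(X^{\mathrm{op}})=\mathcal{K}(X)$), it suffices to show $|A|\geq 3$. Since $h(X)\geq 2$, the set $B$ is nonempty. Pick $b\in B$ maximal in $B$; then $\hat F_b\subseteq A$.

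If $|A|=1$, then $\hat F_b$ is a single point, so $b$ is an up-beat point, which is a contradiction.

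If $|A|=2$, say $A=\{a_1,a_2\}$, then minimality forces $\hat F_b=A$ for every $b$ maximal in $B$. More generally, every non-maximal $x$ must have $\hat F_x\supseteq A$: otherwise $\hat F_x$ lies in the upset of a single $a_i$ and, after descending, produces a minimum. The cleanest route is to observe that $X\setminus A$ has every element below both $a_1$ and $a_2$. Then $X=\mathbb{S}(X\setminus A)$, the non-Hausdorff suspension.

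To see this, let $x\notin A$ and suppose $x\not< a_2$. Then $\hat F_x\subseteq U_{a_1}$ upward, so $a_1$ is the maximum of $\hat F_x$. Take $y$ minimal in $\hat F_x\setminus\{a_1\}$, if it exists. I would argue via Lemma~\ref{lem:L31} and induction downward from the top that $\hat F_x$ has a minimum, making $x$ an up-beat point. Concretely, order the elements of $X\setminus A$ by decreasing height and show by induction that each lies below both maximal elements.

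With $X=\mathbb{S}(Y)$ and $|Y|=8$, the order complex $\mathcal{K}(Y)$ has free $\pi_1$ by \cite[Theorem~5.7]{CianciOttina2018}. A free group is not a nontrivial perfect group, so Lemma~\ref{lem:L34} makes $Y$ homotopically trivial. Moreover $Y$ is minimal: a beat point of $Y$ would be a beat point of $X$, because the suspension points lie above everything and do not affect down-beat conditions. For up-beat conditions, $\hat F^X_y=\hat F^Y_y\cup\{p_1,p_2\}$, which has a minimum exactly when $\hat F^Y_y$ does, provided that set is nonempty. If $\hat F^Y_y$ is empty, then $\hat F^X_y=\{p_1,p_2\}$ has no minimum, so that case is fine. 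Finally, $Y$ cannot be a single point, since $|Y|=8$. Hence $Y$ would be a homotopically trivial non-contractible minimal space with $8<9$ points, contradicting Cianci--Ottina's lower bound of nine.

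\textbf{Step 2: $|B_X|\leq 4$.} This follows from Step 1: $|B|=10-|C|-|A|\leq 10-3-3=4$.

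\textbf{Main obstacle.} The hardest point is the inductive claim in the $|A|=2$ case that every non-maximal element lies below both maximal elements. I would first try the following argument. Take a counterexample $x$ of maximal height, lying below $a_1$ only. Every element of $\hat F_x\setminus A$ is higher than $x$, so it lies below both $a_1$ and $a_2$. But then $a_2$ would be above $x$, a contradiction, unless $\hat F_x=\{a_1\}$, in which case $x$ is up-beat. This closes the induction cleanly.
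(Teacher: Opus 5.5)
Your proof is correct and follows essentially the paper's route. For $|A|=2$ you realise $X$ as $\mathbb{S}(X\setminus A)$, use the free fundamental group from \cite[Theorem~5.7]{CianciOttina2018} together with Lemma~\ref{lem:L34}, check that beat-point-freeness passes to the $8$-point subspace, and contradict the nine-point lower bound; $|C|\geq 3$ then follows by duality and $|B_X|\leq 4$ by counting.

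The only real difference is that you prove the suspension decomposition directly, via the extremal argument in your final paragraph, instead of citing \cite[Remark~2.3(3)]{CianciOttina2020}. The exploratory Lemma~\ref{lem:L31} sketch in the middle is not what that argument actually shows and can be deleted. You should also name Stong's theorem explicitly when concluding that the minimal $8$-point space $Y$ is non-contractible.
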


\begin{proof}
We prove $|A|\geq 3$; the bound $|C|\geq 3$ follows by order duality
($X^{\mathrm{op}}$ is also minimal, homotopically trivial, and
non-contractible since $\mathcal{K}(X^{\mathrm{op}})=\mathcal{K}(X)$),
and $|B_X|\leq 4$ follows from $|X|=10$.

Suppose $|A|\leq 2$. If $|A|=1$, then $X$ has a maximum and is
contractible, contradicting the hypothesis. Hence $|A|=2$. By
\cite[Remark~2.3(3)]{CianciOttina2020}, the minimality of $X$
together with $|\operatorname{Max}(X)|=2$ gives
$X\cong\mathbb{S}(X')$, where $X':=X\setminus\operatorname{Max}(X)$
and $|X'|=8$. Moreover $X'$ inherits the absence of beat points from $X$: for $x\in X'$, any up-beat witness $b=\min\hat F_x^{X'}$ in $X'$ would
remain the minimum of $\hat F_x^X=\hat F_x^{X'}\cup\{a_1,a_2\}$,
since $b\in X'$ satisfies $b<a_1$ and $b<a_2$ in $X=\mathbb S(X')$;
and down-beat witnesses are unchanged since
$\hat U_x^X=\hat U_x^{X'}$ for $x\in X'$ (the suspension maxima
lie above, not below, elements of $X'$). Either case would make
$x$ a beat point of $X$, contradicting minimality.

By \cite[Theorem~5.7]{CianciOttina2018}, $\pi_1(X',x_0)$ is free
for every $x_0\in X'$ (since $|X'|=8\leq 12$), hence not a
non-trivial perfect group. Since $\mathbb{S}(X')\cong X$ is
homotopically trivial, Lemma~\ref{lem:L34} gives that $X'$ is
homotopically trivial. Since $X'$ is beat-point-free it equals
its own core, and since $|X'|=8>1$, Stong's theorem
\cite[Cor.~4.2.12]{Barmak2011} implies $X'$ is not contractible.
Thus $X'$ is a homotopically trivial, non-contractible, finite
$T_0$-space with $|X'|=8<9$, contradicting
\cite[Theorem~3.6]{CianciOttina2020}, which states that every
homotopically trivial non-contractible finite $T_0$-space has at
least nine points.
\end{proof}

\begin{convention}\label{conv:distinct}
Recall from Definition~\ref{def:reach} that
$C_{ca}=|\{(c,a)\in C\times A: c<a\}|$ and
$T_{\mathrm{tr}}=\sum_{c\in C}|M(c)|$. Let $D:=C_{ca}-T_{\mathrm{tr}}$. Since every comparable
pair $(c,a)$ is either transitive or naked, it follows that
\[
D = |E|,
\]
where $E$ denotes the set of naked edges. In particular $D\geq 0$
always; whenever a computation yields $T_{\mathrm{tr}}>C_{ca}$ (equivalently
$D<0$), the configuration is impossible, and we record this in the
sequel as ``$T_{\mathrm{tr}}>C_{ca}$: impossible.''
We use the following counting principle throughout.

\smallskip\noindent\textit{Naked-edge criterion.}
A pair $(c,a)\in C\times A$ with $c<a$ is naked if and only if
$a\notin M(c)$, i.e., no middle element $b_j$ satisfies $c<b_j<a$.
Equivalently, $(c,a)$ is naked iff $c$ lies below no middle that
lies below $a$. This criterion is invoked whenever we assert a
specific edge is naked.

\smallskip\noindent\textit{Distinctness summary.} When Lemma~\ref{lem:iso}
or Lemma~\ref{lem:beat} is invoked at $k$ distinct second
coordinates $a^{(1)},\ldots,a^{(k)}\in A$ (resp.\ at $k$ distinct first
coordinates $c^{(1)},\ldots,c^{(k)}\in C$), the resulting forced naked
edges have pairwise distinct second (resp.\ first) coordinates and so
contribute $k$ distinct edges to $E$. We caution that
re-invoking the \emph{same} lemma at the \emph{same} coordinate does
not, by itself, produce a new edge: each lemma application identifies
a determined collection of forced edges, and we always count $|E|$ by
distinct edges rather than by number of invocations.

\smallskip\noindent\textit{Forced-edge distinctness.}
Each application of Lemma~\ref{lem:iso} to $a\in A$ with $s(a)=0$
forces at least two distinct naked edges $\{c,a\}$ and $\{c',a\}$
(distinct because $c\neq c'$, and both share the second coordinate $a$).
Each application of Lemma~\ref{lem:beat} to $a$ with $s(a)=1$,
with $b_k$ the unique middle below $a$, forces one naked edge
$\{c^*,a\}$ where $c^*\in C\setminus L_k$: no path $c^*<b_j<a$ exists
since $b_k$ is the only $b_j$ with $b_j<a$ and $c^*\notin L_k$,
confirming nakedness.
Edges produced by applications to \emph{different} elements of $A$
share no second coordinate and are therefore automatically distinct.
Edges produced by applications to the same $a$ share the same
second coordinate and are distinct by the distinct-first-coordinate
condition from the lemma.
The accumulated forced edges yield a lower bound $|E|\geq\text{count}$;
a contradiction follows whenever this exceeds $D$.

In applying Lemma~\ref{lem:iso} to $c\in C$ with $t(c)=0$, the
forced naked edges $\{c,a\}$ all share the first coordinate $c$ and
are distinct from edges produced by applications to different $c'\neq c$.
\end{convention}

\subsection{The Height Bound}

\begin{theorem}\label{thm:height}
Let $X$ be a homotopically trivial non-contractible minimal finite
$T_0$-space with $|X|=10$. Then $h(X)\leq 3$.
\end{theorem}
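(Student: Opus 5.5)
Suppose, for contradiction, that $h(X)\geq 4$, and fix a maximal chain $x_0<x_1<x_2<x_3<x_4<\cdots$ with $x_0\in C$ and its top element in $A$. The plan is to use Lemma~\ref{lem:L31} repeatedly along this chain to force $|X|>10$.

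The basic input is that Lemma~\ref{lem:L31} applied to consecutive elements gives $|\hat U_{x_{i+1}}|\geq|\hat U_{x_i}|+2$. Starting from $|\hat U_{x_0}|=0$, this yields $|\hat U_{x_i}|\geq 2i$. In particular $|\hat U_{x_4}|\geq 8$, so $|U_{x_4}|\geq 9$. Dually, $|\hat F_{x_0}|\geq 2\cdot 4=8$ along the same chain.

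The aim is to combine the downward bound at $x_4$ with the upward bound at $x_0$ to find at least two points beyond $U_{x_4}$. If $x_4$ is not maximal, there is some $y>x_4$, so already $|\hat U_y|\geq 10$ and $|X|\geq 11$, a contradiction. So we may assume $x_4\in A$ and the chain has length exactly $4$. Since $|A|\geq 3$ by Lemma~\ref{lem:size}, there are maximal elements $a',a''\neq x_4$. If both lie outside $U_{x_4}$, which they must because they are maximal and distinct from $x_4$, then $|X|\geq|U_{x_4}|+2\geq 11$, again a contradiction.

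This immediately kills height $\geq 4$, so the argument is short. The only point needing care is that Lemma~\ref{lem:L31} requires minimality, which is assumed, and that Lemma~\ref{lem:size}'s bound $|A|\geq 3$ uses only the hypotheses of Theorem~\ref{thm:height}. In fact $|A|\geq 2$ would suffice, since a single maximal element would be a maximum and make $X$ contractible.

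I expect no genuine obstacle here. The one thing to check is that $U_{x_4}$ contains no element of $A$ other than $x_4$, which holds because $A$ is an antichain.
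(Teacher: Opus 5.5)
Your argument is correct, and it takes a different and shorter route than the paper. The paper applies Lemma~\ref{lem:L31} (in dual form) only to the three middle elements $b_1<b_2<b_3$ of a long chain, which gives $|\hat F_{b_1}|\geq 6$, and then splits on $|B_X|$:
\begin{itemize}
\item for $|B_X|\leq 2$ there is no $3$-chain in $B_X$;
\item for $|B_X|=3$ with $|A|\leq 3$ the bound $|\hat F_{b_1}|\leq 2+|A|\leq 5$ fails outright;
\item in the tight cases ($|B_X|=4$, or $|B_X|=3$ with $|A|=4$), equality forces $\hat F_{b_1}=(B_X\setminus\{b_1\})\cup A$, and then every $c\in L_1$ is an up-beat point.
\end{itemize}
You instead run Lemma~\ref{lem:L31} along the whole chain, starting at the minimal endpoint where $\hat U_{x_0}=\emptyset$. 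So the top $a$ of a maximal chain of length $\ell$ satisfies $|U_a|\geq 2\ell+1$. Since the other maximal points lie outside $U_a$, this yields $|X|\geq 2h(X)+|\operatorname{Max}(X)|$ for every minimal finite $T_0$-space. With $|A|\geq 3$ you get $|X|\geq 11$, with no case split and no beat-point analysis. The paper's version stays inside the $C/B/A$ bookkeeping used in later sections, but yours is cleaner and more general. Both rely on Lemma~\ref{lem:size}.

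One correction: your closing remark that $|A|\geq 2$ would suffice is false. With $|A|=2$ your count gives only $|X|\geq 9+1=10$, which is no contradiction. This is sharp: take the fourfold non-Hausdorff suspension of $S^0$, i.e.\ five levels of two points each, with every point below every point of each higher level. It is a minimal ten-point space of height $4$ with exactly two maximal points. So the bound $|A|\geq 3$ of Lemma~\ref{lem:size} is precisely where homotopical triviality and non-contractibility enter your proof. That bound rests on the suspension reduction and the Cianci--Ottina nine-point theorem, and it cannot be replaced by the trivial observation that $X$ has no maximum.
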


\begin{proof}
Suppose $h(X)\geq 4$. Every chain of length $\geq 4$ extends to a
maximal chain $c=z_0<z_1<z_2<z_3<\cdots<z_\ell=a$ with $\ell\geq 4$,
$c\in C$, $a\in A$. Set $b_1=z_1$, $b_2=z_2$, $b_3=z_3$; since
$z_4$ exists and $c<b_1$, the elements $b_1,b_2,b_3\in B_X$ satisfy
$b_1<b_2<b_3$.

Applying Lemma~\ref{lem:L31} to $b_2<b_3$ and then to $b_1<b_2$ gives
\[
|\hat{F}_{b_1}|\geq|\hat{F}_{b_2}|+2\geq|\hat{F}_{b_3}|+4.
\]
Since $X$ is minimal, $b_3$ is not an up-beat point; hence $\hat{F}_{b_3}$
has no minimum. If $|\hat{F}_{b_3}|=0$ then $b_3\in\operatorname{Max}(X)$,
contradicting $b_3\in B_X$. If $|\hat{F}_{b_3}|=1$ then $\hat{F}_{b_3}$
has a unique element, making it the minimum: $b_3$ is an up-beat point,
contradiction. Hence $|\hat{F}_{b_3}|\geq 2$ and $|\hat{F}_{b_1}|\geq 6$.
On the other hand, every element above $b_1$ belongs to
$(B_X\setminus\{b_1\})\cup A$.

\smallskip\noindent\textit{Case $|B_X|=4$.} With $m+4+n=10$ and $m,n\geq 3$:
$m=n=3$. Hence
\[
|\hat{F}_{b_1}|\leq|B_X\setminus\{b_1\}|+n=3+3=6.
\]
Combined with $|\hat{F}_{b_1}|\geq 6$, equality holds:
$\hat{F}_{b_1}=(B_X\setminus\{b_1\})\cup A$. This means $b_1<b_j$
for every $j\in\{2,3,4\}$ and $b_1<a$ for every $a\in A$.

Since $\beta_1\geq 2$, fix any $c\in L_1$. We show $b_1=\min(\hat{F}_c)$.
Since $C=\operatorname{Min}(X)$, no element of $C$ lies in $\hat{F}_c$,
so it suffices to check $b_1\leq x$ for every $x\in\hat{F}_c\cap(B_X\cup A)$.
For $x=b'\in B_X$ with $c<b'$: either $b'=b_1$, or
$b'\in B_X\setminus\{b_1\}\subseteq\hat{F}_{b_1}$ gives $b_1<b'$.
For $x=a\in A$ with $c<a$: since $A\subseteq\hat{F}_{b_1}$, we have $b_1<a$.
Hence $b_1=\min(\hat{F}_c)$ and $c$ is an up-beat point, contradicting
minimality.

\smallskip\noindent\textit{Case $|B_X|=3$.} Since $|B_X|=3$ and
$b_1,b_2,b_3\in B_X$ are pairwise distinct, $\{b_1,b_2,b_3\}=B_X$, so
the chain $b_1<b_2<b_3$ uses every middle element. Hence
$|\hat{F}_{b_1}|\leq|\{b_2,b_3\}|+n=2+n$. With $m+3+n=10$ and
$m\geq 3$, one has $n\leq 4$. If $n\leq 3$ then $|\hat{F}_{b_1}|\leq 5<6$:
contradiction. If $n=4$ then $m=3$, so $|\hat{F}_{b_1}|\leq 2+4=6$. Combined with
$|\hat{F}_{b_1}|\geq 6$, equality holds: $\hat{F}_{b_1}$ has size
exactly $6$, and since $\hat{F}_{b_1}\subseteq\{b_2,b_3\}\cup A$
with $|\{b_2,b_3\}\cup A|=6$, we conclude
$\hat{F}_{b_1}=\{b_2,b_3\}\cup A$. This forces $b_1<b_2$, $b_1<b_3$
(from the chain), and $b_1<a$ for every $a\in A$.

For any $c\in L_1$, we show $b_1=\min(\hat{F}_c)$. Since
$C=\operatorname{Min}(X)$, no element of $C$ lies in $\hat{F}_c$.
The only elements of $B_X$ above $c$ are among $\{b_1,b_2,b_3\}$,
and all satisfy $b_1\leq b'$ (since $b_1<b_2<b_3$). For
$a\in A$ with $c<a$: since $A\subseteq\hat{F}_{b_1}$, we have
$b_1<a$. Hence $b_1=\min(\hat{F}_c)$ and $c$ is an up-beat point,
contradicting minimality.
\smallskip\noindent\textit{Case $|B_X|\leq 2$.} No three-element chain in $B_X$
is possible, contradicting $b_1<b_2<b_3\in B_X$.

In all cases, $h(X)\geq 4$ leads to a contradiction, so $h(X)\leq 3$.
\end{proof}

\begin{remark}\label{rem:height-logic}
The argument exploits $\hat{F}_{b_1}$ of the lowest middle, not
$\hat{U}_{b_3}$ of the highest; the equality $|\hat{F}_{b_1}|=6$
makes $b_1$ the minimum of $\hat{F}_c$ for every $c\in L_1$ (since
all other elements of $\hat{F}_c$ lie in $(B_X\setminus\{b_1\})\cup A$,
hence above~$b_1$). The case $|B_X|=3$ with $h(X)=4$ is excluded by
$|\hat{F}_{b_1}|\leq 5<6$ when $n\leq 3$, or by the up-beat argument
when $n=4$.
\end{remark}

Combined with the contrapositive of Proposition~\ref{prop:P35},
which gives $h(X)\geq 2$, the height satisfies $h(X)\in\{2,3\}$.
The case $h(X)=3$ is excluded within each subsequent section,
ultimately establishing $h(X)=2$ for all valid spaces.
\section{Case $|B_X|=1$: No Spaces Exist}\label{sec:bx1}

\begin{theorem}\label{thm:bx1}
No homotopically trivial non-contractible minimal finite $T_0$-space
with ten points has exactly one middle element.
\end{theorem}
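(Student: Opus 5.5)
Since $|B_X|=1$, the height of $X$ is exactly two, because Theorem~\ref{thm:height} and Proposition~\ref{prop:P35} give $h(X)\in\{2,3\}$ and $h(X)=3$ needs two comparable middles. So write $B=\{b\}$, $L=L_1$, $U=U_1$, $\beta=|L|\geq 2$, $\alpha=|U|\geq 2$, with $m,n\geq 3$ by Lemma~\ref{lem:size}. Rather than grinding through formula~\eqref{eq:1} case by case, I would prove the theorem by identifying the homotopy type of $\mathcal{K}(X)$ directly.

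\textbf{Step 1: describe $\mathcal{K}(X)$.} The simplices of $\mathcal{K}(X)$ containing $b$ form the closed star $\operatorname{St}(b)$. This is the cone with apex $b$ over the complete bipartite graph on $L\sqcup U$, so it is contractible. It contains every transitive $C$--$A$ edge, since these are exactly the pairs $(c,a)$ with $c\in L$ and $a\in U$. Every simplex not in $\operatorname{St}(b)$ is either a vertex of $(C\setminus L)\cup(A\setminus U)$ or a naked edge in $E$. Indeed, every chain not containing $b$ lies in $C\cup A$, and so is a vertex or a $C$--$A$ edge.

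\textbf{Step 2: quotient to a graph.} Collapse the contractible subcomplex $\operatorname{St}(b)$ to a point $*$. The quotient map is a homotopy equivalence, and the result is a graph $G$ with the following structure:
\begin{itemize}
\item its vertex set is $\{*\}\cup(C\setminus L)\cup(A\setminus U)$;
\item its edges are the elements of $E$, each joining its two endpoints after identifying $L\cup U$ with $*$.
\end{itemize}
No edge of $E$ joins two elements of $L\cup U$, because such an edge would be transitive. Hence $G$ has no loops, though it may a priori have multiple edges at $*$. Because $X$ is homotopically trivial, $G$ must be a tree. So $G$ has no multiple edges and
\[
|E| = (m-\beta)+(n-\alpha).
\]

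\textbf{Step 3: degree count against minimality.} I would bound the degrees in $G$ from below using the derived lemmas.
\begin{itemize}
\item Each $a\in A\setminus U$ has $s(a)=0$. By Lemma~\ref{lem:iso} it has at least two naked edges, so its degree in $G$ is at least $2$. Dually, each $c\in C\setminus L$ has degree at least $2$.
\item Each $a\in U$ has $s(a)=1$. By Lemma~\ref{lem:beat} there is a naked edge $\{c^*,a\}$ with $c^*\in C\setminus L$, and this edge contributes to the degree of $*$. Distinct $a\in U$ give distinct edges.
\item Dually, each $c\in L$ yields a naked edge $\{c,a^*\}$ with $a^*\in A\setminus U$. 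These edges are distinct from those in the previous item, since their $C$-endpoint lies in $L$ rather than $C\setminus L$.
\end{itemize}
Hence $\deg(*)\geq\alpha+\beta$. Summing all degrees,
\[
2|E|\;\geq\;2(m-\beta)+2(n-\alpha)+\alpha+\beta\;>\;2|E|,
\]
which is a contradiction.

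The main obstacle is Step 2: justifying that the quotient by the star is a homotopy equivalence, and checking carefully that no edge of $E$ becomes a loop. As a fallback, I would verify the same count via the Euler relation $\chi=1$ from~\eqref{eq:1}. That relation reads $(\beta-1)(\alpha-1)=C_{ca}-(m+n-1)$, and it is equivalent to $|E|=m+n-\alpha-\beta$. Separately, I would exhibit a cycle using Lemma~\ref{lem:template} if needed to rule out the multigraph case.
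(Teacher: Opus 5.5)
Your proof is correct, and it takes a genuinely different and much shorter route than the paper's.

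The paper uses only $\chi=1$, in the form $C_{ca}=9-\beta-\alpha+\beta\alpha$, together with the one-sided bounds $C_{ca}\geq\beta\alpha+2(m-\beta)$ and $C_{ca}\geq\beta\alpha+2(n-\alpha)$. These two bounds add up to an equality rather than a contradiction. The paper must therefore run a parity step, beat-point eliminations, budget contradictions, and a separate count for $(\beta,\alpha)\in\{(2,3),(3,2)\}$.

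Your handshake count on $G$ uses the same inputs. In addition, it credits the $\alpha+\beta$ naked edges at $*$ forced by Lemma~\ref{lem:beat} at each $a\in U$ and by its dual at each $c\in L$. The paper's bound discards these edges. Including them makes the inequality strict for every $(\beta,\alpha)$ at once. Equivalently, counting $E$ by $C$-endpoint gives $|E|\geq 2(m-\beta)+\beta$, and counting by $A$-endpoint gives $|E|\geq 2(n-\alpha)+\alpha$. Adding these contradicts $|E|=(m-\beta)+(n-\alpha)$. This is the same partition the paper itself uses to obtain~\eqref{eq:6} when $|B_X|=2$.

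The step you flag as the main obstacle is not actually needed. Since $\chi(\operatorname{St}(b))=1$, one has $\chi(\mathcal{K}(X))=1+(m-\beta)+(n-\alpha)-|E|$ directly. So $\chi=1$ already gives the edge count. Moreover, $2|E|=\sum_v\deg(v)$ holds for loopless multigraphs, so no Lemma~\ref{lem:template} fallback is required.

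The quotient is nonetheless legitimate, since collapsing a contractible subcomplex of a CW complex is a homotopy equivalence. What it buys beyond the paper is a structural description: with one middle element, $\mathcal{K}(X)$ is homotopy equivalent to the graph $G$. Homotopical triviality is then exactly the condition that $G$ be a tree.
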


\begin{proof}
Let $B_X=\{b\}$. By Lemma~\ref{lem:size}, $m,n\geq 3$ and
$m+n=9$. Since every chain through $b$ has length at most two,
$h(X)=2$. Write $\beta=|L|$ and $\alpha=|U|$ with $\beta,\alpha\geq 2$.
Setting $\chi=1$ in~\eqref{eq:1}:
\begin{equation}\label{eq:3}
C_{ca}=9-\beta-\alpha+\beta\alpha.
\end{equation}
For each $c\in L$, transitivity gives $\alpha$ maximal elements
above $c$. For each $c\in C\setminus L$ (with $t(c)=0$),
Lemma~\ref{lem:iso} forces at least two naked edges from $c$.
Hence $C_{ca}\geq\beta\alpha+2(m-\beta)$; a dual argument gives
$C_{ca}\geq\beta\alpha+2(n-\alpha)$. Substituting
into~\eqref{eq:3} and rearranging gives inequalities~\eqref{eq:4}:
\begin{equation}\label{eq:4}
9+\beta-\alpha\geq 2m \quad\text{and}\quad 9-\beta+\alpha\geq 2n.
\end{equation}
Adding both inequalities gives $18\geq 2(m+n)=18$, so equality
holds throughout and $m-n=\beta-\alpha$.

\smallskip\noindent\textbf{Step~1: Parity.}
Since $m=(9+\beta-\alpha)/2$ must be a positive integer, $\beta$
and $\alpha$ must have opposite parity. This eliminates all pairs
with $\beta\equiv\alpha\pmod{2}$.

\smallskip\noindent\textbf{Step~2: Beat-point elimination.}
From $m=(9+\beta-\alpha)/2$ one sees $\beta=m\Leftrightarrow\alpha=n$.
If $\beta=m$ then $L=C$ and $\hat{F}_c=\{b\}\cup A$ with
$b=\min(\hat{F}_c)$ for every $c\in C$: every minimal element is
an up-beat point, contradicting minimality. Symmetrically $\alpha=n$
makes every maximal element a down-beat point.

The admissible pairs are all $(\beta,\alpha)$ with $\beta,\alpha\geq 2$,
$\beta+\alpha\leq 9$, opposite parity, and $|\beta-\alpha|\leq 3$
(the last two conditions equivalent to $m,n\geq 3$). Together with
$\beta\leq m=(9+\beta-\alpha)/2$ these are precisely:
\[
(\beta,\alpha)\in\{
(2,3),(2,5),(3,2),(3,4),(3,6),(4,3),(4,5),(5,2),(5,4),(6,3)
\},
\]
giving exactly ten pairs. (One checks: $(3,6)$ gives $m=3,n=6$;
$(6,3)$ gives $m=6,n=3$; the other eight have smaller range.)
The pairs $(3,6),(4,5),(5,4),(6,3)$ are eliminated by Step~2
because $(3,6)\to\beta=3=m$, $(4,5)\to\beta=4=m$,
$(5,4)\to\alpha=4=n$, $(6,3)\to\alpha=3=n$.
The six surviving pairs are:
\[
(\beta,\alpha)\in\{(2,3),(2,5),(3,2),(3,4),(4,3),(5,2)\}.
\]

\smallskip\noindent\textbf{Step~3: Budget elimination.}
Since $T_{\mathrm{tr}}=\beta\alpha$ (the unique middle $b$ provides the only
transitive paths), we have $D=C_{ca}-\beta\alpha=9-\beta-\alpha$.

\emph{Case $(\beta,\alpha)=(2,5)$, $(m,n)=(3,6)$}: $D=2$. Write
$C\setminus L=\{c_3\}$ and $A\setminus U=\{a_6\}$. Since $|B|=1$,
the unique middle element $b$ is the only candidate for a transitive
path $c<b<a$, so $s(a_j)\leq 1$ for every $a_j\in A$. Each
$a_j\in U$ satisfies $b<a_j$, giving $s(a_j)\geq 1$; hence
$s(a_j)=1$ with $b$ the unique middle below $a_j$. Lemma~\ref{lem:beat}
then forces a naked edge $c_3<a_j$ (since $C\setminus L=\{c_3\}$
is the only element of $C$ not in $L=L_b$), for each $j=1,2,3,4,5$.
This gives five distinct naked edges $\{(c_3,a_j)\}$, but $D=2$:
contradiction.

\emph{Case $(\beta,\alpha)=(3,4)$, $(m,n)=(4,5)$}: $D=2$. Write
$C\setminus L=\{c_4\}$ and $A\setminus U=\{a_5\}$. Since $|B|=1$,
the unique middle element $b$ is the only candidate for a transitive
path $c<b<a$, so $s(a_j)\leq 1$ for every $a_j\in A$. Each
$a_j\in U$ satisfies $b<a_j$, giving $s(a_j)\geq 1$; hence
$s(a_j)=1$ with $b$ the unique middle below $a_j$. Lemma~\ref{lem:beat}
then forces a naked edge $c_4<a_j$ (since $C\setminus L=\{c_4\}$
is the only element of $C$ not in $L=L_b$), for each $j=1,2,3,4$.
This gives four distinct naked edges $\{(c_4,a_j)\}$, but $D=2$:
contradiction.

The cases $(\beta,\alpha)\in\{(5,2),(4,3)\}$ are the order-duals
of the two cases above: replace every $s(a_j)=1$ argument with
$t(c_j)=1$ via the dual of Lemma~\ref{lem:beat}, and every
$s(a_k)=0$ with $t(c_k)=0$ via the dual of Lemma~\ref{lem:iso},
giving symmetric budget contradictions.

The only remaining pairs are $(\beta,\alpha)=(2,3)$ with
$(m,n)=(4,5)$ and its order-dual $(3,2)$. Take $(\beta,\alpha)=(2,3)$,
$C_{ca}=10$. Label $L=\{c_1,c_2\}$, $U=\{a_1,a_2,a_3\}$,
$C\setminus L=\{c_3,c_4\}$, $A\setminus U=\{a_4,a_5\}$, and write
$p$, $q$, $r$ for the number of pairs in $\{c_1,c_2\}\times\{a_4,a_5\}$,
$\{c_3,c_4\}\times\{a_1,a_2,a_3\}$, and
$\{c_3,c_4\}\times\{a_4,a_5\}$ respectively, so $p+q+r=C_{ca}-T_{\mathrm{tr}}=4$.
Since $|B|=1$, every transitive path passes through $b$, so
$s(a_j)\leq 1$ for all $a_j\in A$. Each $a_j\in U$ has $b<a_j$,
giving $s(a_j)\geq 1$, hence $s(a_j)=1$. Lemma~\ref{lem:beat}
then forces at least one naked edge from $\{c_3,c_4\}=C\setminus L$
to each $a_j\in U$, giving $q\geq 3$. Each $a_k\in A\setminus U$
satisfies $b\not<a_k$, so $s(a_k)=0$; Lemma~\ref{lem:iso} forces
at least two elements of $C$ below each $a_k$, giving $p+r\geq 4$.
But $p+r=4-q\leq 1<4$: contradiction.
\end{proof}

\section{Case $|B_X|=2$: No Spaces Exist}\label{sec:bx2}

\begin{theorem}\label{thm:bx2}
No homotopically trivial non-contractible minimal finite $T_0$-space
with ten points has exactly two middle elements.
\end{theorem}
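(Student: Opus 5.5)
We write $B=\{b_1,b_2\}$, so that $m+n=8$ with $m,n\geq 3$ by Lemma~\ref{lem:size}. Hence $(m,n)\in\{(3,5),(4,4),(5,3)\}$, and by order duality (note $\mathcal{K}(X^{\mathrm{op}})=\mathcal{K}(X)$) it suffices to treat $(3,5)$ and $(4,4)$. The argument splits according to whether $b_1$ and $b_2$ are comparable, i.e.\ according to $h(X)\in\{2,3\}$ (Theorem~\ref{thm:height}).

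\textbf{Height three.} Suppose $b_1<b_2$. Lemma~\ref{lem:L31} gives $|\hat F_{b_1}|\geq|\hat F_{b_2}|+2\geq 4$ and dually $|\hat U_{b_2}|\geq 4$. Since $\hat F_{b_1}\subseteq\{b_2\}\cup A$, we get $\alpha_1\geq 3$; likewise $\beta_2\geq 3$.
\begin{itemize}
\item If $U_1=U_2$ (which is automatic when $U_1\subseteq U_2$ together with $\alpha_1=\alpha_2$), then for any $c\in L_1$ the element $b_1$ is the minimum of $\hat F_c$, unless $c$ lies below some $a\notin U_1$ or is otherwise rescued. I will therefore use Lemma~\ref{lem:beat} and its dual to force naked edges. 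Note that $U_2\subseteq U_1$, since everything above $b_2$ is above $b_1$.
\item By Lemma~\ref{lem:L31}, $|\hat F_{b_1}|=1+\alpha_1\geq\alpha_2+3$, so $\alpha_1\geq\alpha_2+2$. Dually, $\beta_2\geq\beta_1+2$. With $\alpha_2,\beta_1\geq2$ this gives $\alpha_1\geq4$ and $\beta_2\geq4$, so $n\geq4$ and $m\geq4$. Hence $(m,n)=(4,4)$ and $\alpha_1=\beta_2=4$, $\alpha_2=\beta_1=2$, so $U_1=A$ and $L_2=C$.
\end{itemize}
Every $a\in U_2$ now has $\hat U_a\supseteq\{b_1,b_2\}\cup C$, and it equals that set. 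So $b_2=\max\hat U_a$ and $a$ is a down-beat point, contradicting minimality. The case $b_2<b_1$ is symmetric, so $B$ is an antichain.

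\textbf{Height two.} Now \eqref{eq:1} with $\chi=1$ gives
\[
C_{ca}=9-\sum_{j}(\beta_j+\alpha_j)+\sum_j\beta_j\alpha_j,
\]
equivalently $\sum_j(\beta_j-1)(\alpha_j-1)=C_{ca}-7$. I will then compute $T_{\mathrm{tr}}=\sum_c|M(c)|$ and the budget $D=C_{ca}-T_{\mathrm{tr}}$. By inclusion–exclusion, $T_{\mathrm{tr}}=\beta_1\alpha_1+\beta_2\alpha_2-|L_1\cap L_2|\,|U_1\cap U_2|$. Lemma~\ref{lem:L33} forces $|L_1\cap L_2|\leq1$ or $|U_1\cap U_2|\leq1$, so the overlap term is at most $\max(|L_1\cap L_2|,|U_1\cap U_2|)$. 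This yields
\[
D=9-\sum_j(\beta_j+\alpha_j)+|L_1\cap L_2|\,|U_1\cap U_2|.
\]
The naked edges are then bounded from below:
\begin{itemize}
\item every $a$ with $s(a)=0$ contributes at least $2$ (Lemma~\ref{lem:iso});
\item every $a$ with $s(a)=1$ contributes at least $1$ (Lemma~\ref{lem:beat});
\item the same holds dually on $C$, and if $L_k=C$, Lemma~\ref{lem:fullL} kills any $a$ below only $b_k$.
\end{itemize}
Counting on the $A$ side alone gives $|E|\geq 2\,\#\{s=0\}+\#\{s=1\}$. Since $\#\{s=2\}=|U_1\cap U_2|$ and $\#\{s\geq1\}=|U_1\cup U_2|$, we obtain $|E|\geq 2n-|U_1|-|U_2|$. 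Similarly $|E|\geq 2m-|L_1|-|L_2|$. Adding these and comparing with $2D$ gives
\[
2(m+n)-\sum_j(\beta_j+\alpha_j)\leq 18-2\sum_j(\beta_j+\alpha_j)+2|L_1\cap L_2|\,|U_1\cap U_2|,
\]
that is, $\sum_j(\beta_j+\alpha_j)\leq 2+2\,|L_1\cap L_2|\,|U_1\cap U_2|$.

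Since $\sum_j(\beta_j+\alpha_j)\geq 8$, we need $|L_1\cap L_2|\,|U_1\cap U_2|\geq3$. With one of the two factors at most $1$ (Lemma~\ref{lem:L33}), the other factor must be at least $3$, and the sum must stay small. Without loss of generality take $|L_1\cap L_2|=1$ and $|U_1\cap U_2|=u\geq3$. Then $\alpha_1+\alpha_2\geq 2u$, while $\beta_1+\beta_2\geq 4$, so $2u+4\leq\sum_j(\beta_j+\alpha_j)\leq 2+2u$, which is a contradiction.

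\textbf{Main obstacle.} The hardest step is making the double counting of forced naked edges rigorous: edges forced from the $A$ side and from the $C$ side may coincide, so the two lower bounds cannot simply be added. The inequality I actually need is the single bound $|E|\geq\max(\cdot,\cdot)$, i.e.\ the average of the two bounds. Since $\max\geq$ average, the averaged inequality above remains valid, and that is all the contradiction uses. If this slack turns out to be insufficient in some subcase, my fallback is explicit case enumeration over $(\beta_j,\alpha_j)$, using Lemma~\ref{lem:template} to produce non-bounding $1$-cycles.
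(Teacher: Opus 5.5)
There is a genuine gap in your height-three case. Your height-two argument is correct, and tidier than the paper's. Adding $|E|\ge 2n-\alpha_1-\alpha_2$ and $|E|\ge 2m-\beta_1-\beta_2$ is legitimate, because each is separately a lower bound for $|E|=D$; so your worry about coinciding edges is unfounded. The sum gives $\sum_j(\beta_j+\alpha_j)\le 2+2tp$ for all three $(m,n)$ at once, whereas the paper runs \eqref{eq:6} and \eqref{eq:7} case by case.

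The gap is in the height-three case. Lemma~\ref{lem:L31} applied to $b_1<b_2$ gives $|\hat{F}_{b_1}|\ge|\hat{F}_{b_2}|+2$. Here $\hat{F}_{b_2}=U_2$ and $\hat{F}_{b_1}=\{b_2\}\cup U_1$, so you only get $1+\alpha_1\ge\alpha_2+2$, i.e.\ $\alpha_1\ge\alpha_2+1$, and dually $\beta_2\ge\beta_1+1$. Your bound $1+\alpha_1\ge\alpha_2+3$ is off by one, so $m,n\ge 4$, $U_1=A$ and $L_2=C$ do not follow.

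With the correct bounds, your beat-point finish still works whenever $L_2=C$ (down-beat at any $a\in U_2$) or $U_1=A$ (up-beat at any $c\in L_1$). That settles $(3,5)$ and $(5,3)$, where $\beta_2\ge 3=m$, respectively $\alpha_1\ge 3=n$. It also settles $(4,4)$ with $\beta_2=4$ or $\alpha_1=4$. It leaves untreated exactly one configuration: $(m,n)=(4,4)$ with $(\beta_1,\beta_2,\alpha_1,\alpha_2)=(2,3,3,2)$.

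The paper closes this configuration with Lemma~\ref{lem:euler3}, which gives $C_{ca}=5+\beta_1+\beta_2$. It plays this against the lower bound $C_{ca}\ge\beta_1+\beta_2+8$, obtained by counting maximal elements above each $c$:
\begin{itemize}
\item for $c\in L_1$, avoiding an up-beat point;
\item for $c\in L_2\setminus L_1$, Lemma~\ref{lem:beat};
\item for $c\notin L_2$, Lemma~\ref{lem:iso}.
\end{itemize}

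A shorter patch in your own style also works. Each $c\in L_1$ must lie below the unique $a^*\in A\setminus U_1$; otherwise $\hat{F}_c=\{b_1,b_2\}\cup U_1$ has minimum $b_1$. Since $U_2\subseteq U_1$, we have $s(a^*)=0$, so both edges from $L_1$ to $a^*$ are naked. Lemma~\ref{lem:template}, with any $a\in U_1$ as common upper bound, then gives $H_1(\mathcal{K}(X);\mathbb{Z})\neq 0$. Your first bullet in the height-three part is a stray remark and can be dropped.
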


\begin{proof}
Let $B_X=\{b_1,b_2\}$. By Lemma~\ref{lem:size}, $m,n\geq 3$ and
$m+n=8$, so $(m,n)\in\{(3,5),(4,4),(5,3)\}$.

We first develop inequalities for the antichain case $b_1\parallel b_2$.
Set $t=|L_1\cap L_2|$ and $p=|U_1\cap U_2|$. By inclusion-exclusion,
$T_{\mathrm{tr}}=\beta_1\alpha_1+\beta_2\alpha_2-tp$. Setting $\chi=1$
in~\eqref{eq:1}:
\begin{equation}\label{eq:5}
D=9-(\beta_1+\beta_2+\alpha_1+\alpha_2)+tp.
\end{equation}
Partitioning $C$ as $C\setminus(L_1\cup L_2)$ (size
$m-\beta_1-\beta_2+t$, $t(c)=0$), $L_1\setminus L_2$ (size
$\beta_1-t$, $t(c)=1$), and $L_2\setminus L_1$ (size $\beta_2-t$,
$t(c)=1$), and applying Lemma~\ref{lem:iso} to the first group and
Lemma~\ref{lem:beat} to the other two gives $D\geq 2m-\beta_1-\beta_2$.
Combined with~\eqref{eq:5}:
\begin{equation}\label{eq:6}
tp\geq\alpha_1+\alpha_2+2m-9.
\end{equation}
An analogous partition of $A$ yields:
\begin{equation}\label{eq:7}
tp\geq\beta_1+\beta_2+2n-9.
\end{equation}
By Lemma~\ref{lem:L33}: $p\geq 2$ implies $t\leq 1$; by order-duality:
$t\geq 2$ implies $p\leq 1$.

\smallskip\noindent\textbf{Case 1: $(m,n)=(3,5)$.}

Suppose $b_1<b_2$ (height three). Since $\hat{U}_{b_1}=L_1$ and
$\hat{U}_{b_2}\subseteq C\cup\{b_1\}$, Lemma~\ref{lem:L31} gives
$\beta_2+1\geq\beta_1+2$. The bound
$|\hat{U}_{b_2}|\leq|C|+1=4$ and $\beta_1\geq 2$ force equality:
$\beta_1=2$ and $\beta_2=3$. The dual of Lemma~\ref{lem:L31} gives
$\alpha_1\geq\alpha_2+1$. If $\alpha_1=n=5$ then $U_1=A$, so for
any $c\in L_1$ one has $\hat{F}_c=\{b_1,b_2\}\cup A$ with
$b_1=\min(\hat{F}_c)$ (since $b_1<b_2$), making $c$ an up-beat
point: contradiction. Hence $\alpha_1\leq 4$.

Applying Lemma~\ref{lem:euler3} with $E_B=\{(1,2)\}$, $m=3$, $n=5$:
$C_{ca}=5+\alpha_1+\alpha_2$.
Since $b_1<b_2$, $L_1\subseteq L_2$, so $C$ partitions as
$L_1=\{c_1,c_2\}$, $L_2\setminus L_1=\{c_3\}$,
$C\setminus L_2=\emptyset$.
For $c_i\in L_1$: if no element of $A\setminus U_1$ lies above $c_i$
then $\hat{F}_{c_i}=\{b_1,b_2\}\cup U_1$ with $b_1=\min(\hat{F}_{c_i})$
(an up-beat point): contradiction. Hence $c_i$ reaches at least
$\alpha_1+1$ maximals. For $c_3\in L_2\setminus L_1$ (with $t(c_3)=1$):
Lemma~\ref{lem:beat} gives some $a\in A\setminus U_2$ above $c_3$,
so $c_3$ reaches at least $\alpha_2+1$ maximals. Therefore
$C_{ca}\geq 2(\alpha_1+1)+(\alpha_2+1)=2\alpha_1+\alpha_2+3$.
Combined with $C_{ca}=5+\alpha_1+\alpha_2$: $\alpha_1\leq 2$,
contradicting $\alpha_1\geq\alpha_2+1\geq 3$.

Now suppose $b_1\parallel b_2$. From~\eqref{eq:7} with $n=5$:
$tp\geq\beta_1+\beta_2+1\geq 5$ (using $\beta_j\geq 2$). If $p=1$ then $tp=t\leq|C|=3<5$:
contradiction. So $p\geq 2$; Lemma~\ref{lem:L33} gives $t\leq 1$.
Since $t=0$ gives $tp=0<5$, we have $t=1$ and $tp=p\geq 5$.
Then $|U_1\cap U_2|=p\geq 5=n$, forcing $U_1\cap U_2=A$ and
$\alpha_1=\alpha_2=5$. Inequality~\eqref{eq:6} then gives
$tp\geq 5+5-3=7>5=tp$: contradiction.

\smallskip\noindent\textbf{Case 2: $(m,n)=(5,3)$.}
This is the full order-dual of Case~1 under $X\mapsto X^{\mathrm{op}}$
(which preserves the hypotheses, swaps $(C,A)$, $(\beta,\alpha)$,
$(t,p)$, and exchanges~\eqref{eq:6} with~\eqref{eq:7}): the
$b_1<b_2$ sub-case dualizes to the $b_2<b_1$ argument of Case~1
with $C$ and $A$ interchanged, and the $b_1\parallel b_2$ sub-case
dualizes via the bound $tp\geq\alpha_1+\alpha_2+2m-9\geq 5$
from~\eqref{eq:6} ($m=5$, $\alpha_j\geq 2$),
running through $p=1$ ($t\geq 5$ forces $L_1\cap L_2=C$ and
$\beta_1=\beta_2=5$, then~\eqref{eq:7} gives $tp\geq 7>tp$) and
$p\geq 2$ ($t\leq 1$, $t=1$ gives $tp=p\geq 5>|A|=3$); both sub-cases
contradict.

\smallskip\noindent\textbf{Case 3: $(m,n)=(4,4)$.}

Suppose $b_1<b_2$.
Lemma~\ref{lem:L31} gives $\beta_2\geq\beta_1+1$; its dual gives
$\alpha_1\geq\alpha_2+1\geq 3$. If $\alpha_1=4=n$ then $U_1=A$,
and for any $c\in L_1$ we have $\hat{F}_c=\{b_1,b_2\}\cup A$ with
$b_1=\min(\hat{F}_c)$: $c$ is an up-beat point. Hence $\alpha_1\leq 3$,
giving $\alpha_1=3$ and $\alpha_2=2$.

Lemma~\ref{lem:euler3} with $E_B=\{(1,2)\}$ gives
$C_{ca}=5+\beta_1+\beta_2$.
Since $L_1\subseteq L_2$ (as $b_1<b_2$) and $U_2\subseteq U_1$,
partition $C$ as $L_1$, $L_2\setminus L_1$, and $C\setminus L_2$.
For $c\in L_1$: if no element of $A\setminus U_1$ lies above $c$
then $b_1=\min(\hat{F}_c)$ (up-beat): contradiction. Hence $c$
reaches at least $\alpha_1+1=4=n$ maximals.
For $c\in L_2\setminus L_1$ ($t(c)=1$): Lemma~\ref{lem:beat}
gives at least $\alpha_2+1=3$ maximals above $c$.
For $c\in C\setminus L_2$ ($t(c)=0$): Lemma~\ref{lem:iso} gives
at least $2$ maximals above $c$.
Summing: $C_{ca}\geq 4\beta_1+3(\beta_2-\beta_1)+2(4-\beta_2)
=\beta_1+\beta_2+8$. But $C_{ca}=5+\beta_1+\beta_2$, giving
$5\geq 8$: contradiction.

Now suppose $b_1\parallel b_2$.
Inequality~\eqref{eq:6} with $m=4$ gives $tp\geq\alpha_1+\alpha_2-1\geq 3$;
inequality~\eqref{eq:7} with $n=4$ gives $tp\geq\beta_1+\beta_2-1\geq 3$
(both using $\alpha_j,\beta_j\geq 2$).
By Lemma~\ref{lem:L33} and its dual, at least one of $t\leq 1$ or
$p\leq 1$ holds.
\emph{Sub-case $t\leq 1$}: Since $t=0$ gives $tp=0<3$, we have
$t=1$ and $tp=p\geq\alpha_1+\alpha_2-1$. Taking $\alpha_1\geq\alpha_2$
without loss of generality: $p\leq|U_2|=\alpha_2$ and
$p\geq\alpha_1+\alpha_2-1\geq 2\alpha_2-1$, so $\alpha_2\leq 1$:
contradicts $\alpha_2\geq 2$.
\emph{Sub-case $p\leq 1$}: Since $p=0$ gives $tp=0<3$, we have
$p=1$ and $tp=t\geq\beta_1+\beta_2-1$. Without loss of generality
$\beta_1\leq\beta_2$. Since $t=|L_1\cap L_2|\leq|L_1|=\beta_1$,
combining with $t\geq\beta_1+\beta_2-1\geq 2\beta_1-1$ gives
$\beta_1\geq 2\beta_1-1$, i.e., $\beta_1\leq 1$: contradicts
$\beta_1\geq 2$.
\end{proof}
\section{Case $|B_X|=3$: Exactly Six Spaces}\label{sec:bx3}

\begin{theorem}\label{thm:bx3}
There are exactly six homotopically trivial non-contractible minimal
finite $T_0$-spaces with $|X|=10$ and $|B_X|=3$, up to
homeomorphism.
\end{theorem}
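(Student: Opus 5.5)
By Lemma~\ref{lem:size}, with $|B|=3$ we have $m+n=7$ and $m,n\geq 3$, so $(m,n)\in\{(3,4),(4,3)\}$. Since $X\mapsto X^{\mathrm{op}}$ preserves all hypotheses and swaps $(m,n)$, I will treat $(m,n)=(3,4)$ only and obtain the other three spaces as order-duals. The proof then has three stages: first show $h(X)=2$, then enumerate the admissible height-two configurations, and finally certify that each survivor is homotopically trivial.

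\textbf{Excluding height three.} By Theorem~\ref{thm:height}, $h(X)\le 3$, so suppose $b_i<b_k$ for some pair in $B$. Lemma~\ref{lem:L31} gives $\beta_k\geq\beta_i+1$ and $\alpha_i\geq\alpha_k+1$. Because $|C|=3$ and $\beta_i\ge 2$, this forces $\beta_i=2$, $\beta_k=3$, so $L_k=C$. The argument of Case~1 in Theorem~\ref{thm:bx2} then applies: if $U_i=A$ the elements of $L_i$ are up-beat points, which is excluded. I will then use Lemma~\ref{lem:euler3} with $|E_B|\in\{1,2\}$ (the possible comparability patterns on three middles), together with the per-element lower bounds on edges from $C$ coming from Lemmas~\ref{lem:beat} and~\ref{lem:fullL}. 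The expected outcome is that $C_{ca}$ is overcounted against the value forced by~\eqref{eq:2}, giving a budget contradiction. This mirrors the $|B|=2$ computation, with one extra middle acting either as an antichain partner or as a second cover.

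\textbf{The antichain case.} With $B$ an antichain, \eqref{eq:1} gives
\[
D=C_{ca}-T_{\mathrm{tr}}=6-\sum_j(\beta_j+\alpha_j)+\sum_j\beta_j\alpha_j-T_{\mathrm{tr}},
\]
where $T_{\mathrm{tr}}$ is computed by inclusion–exclusion over the sets $L_j\times U_j$. I will partition $C$ by $t(c)\in\{0,1,2,3\}$ and $A$ by $s(a)$, and apply Lemmas~\ref{lem:iso} and~\ref{lem:beat} to obtain lower bounds on $|E|=D$. Pairwise constraints come from Lemma~\ref{lem:L33}: common $A$-neighbourhood of size $\geq 2$ forces common $C$-neighbourhood of size $\le 1$, and dually. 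Since $\beta_j\in\{2,3\}$ and $\alpha_j\in\{2,3,4\}$, the search space is small. I will organize it by the multiset of $(\beta_j,\alpha_j)$, discarding every configuration that contains a beat point, violates the budget $D$, or admits a non-bounding cycle via Lemma~\ref{lem:template}. The survivors should be exactly three non-isomorphic posets, Types~I–III. For each one I will check minimality directly and give an explicit elementary collapse sequence of $\mathcal{K}(X)$, which yields contractibility. Finally, I must confirm that the three types are pairwise non-homeomorphic, for instance via the degree sequences of $\mathcal{K}(X)$ or the multisets $(\beta_j,\alpha_j)$, and that none is self-dual. Self-duality cannot occur here because $|C|\ne|A|$, so the count is exactly $2\cdot 3=6$.

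\textbf{Main obstacle.} The hard step is the exhaustive antichain enumeration. The budget and beat-point lemmas prune most cases, but some configurations satisfy $\chi=1$ with no naked edge forced. For these I expect to need explicit $1$-cycles, or higher homology, to rule them out, as well as careful isomorphism bookkeeping so that each surviving labelled configuration is identified with one of Types~I–III.
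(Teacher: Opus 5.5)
Your outline follows the paper's architecture: reduce to $(m,n)=(3,4)$ by duality, exclude height three, enumerate antichain configurations by degree-pair multisets, certify the survivors by collapses, and double via $X^{\mathrm{op}}$. The height-three step, however, has a genuine gap.

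Lemma~\ref{lem:L31} yields $\beta_k\ge\beta_i+1$ only when $\hat U_{b_k}$ contains no middle other than $b_i$. Likewise it yields $\alpha_i\ge\alpha_k+1$ only when $\hat F_{b_i}$ contains no middle other than $b_k$. For the two shapes with $|E_B|=2$ this fails on one side:
\begin{itemize}
\item If $b_1<b_2$ and $b_1<b_3$, then $\hat F_{b_1}=\{b_2,b_3\}\cup U_1$, and you only get $\alpha_1\ge\alpha_2$ and $\alpha_1\ge\alpha_3$.
\item If $b_1<b_2$ and $b_3<b_2$, then $\hat U_{b_2}=\{b_1,b_3\}\cup L_2$, and you only get $\beta_2\ge\beta_1$. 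So $L_k=C$ is not forced; the paper's Case~(0b) in fact passes through $L_1=L_2=L_3$.
\end{itemize}
The up-beat argument you import from Case~1 of Theorem~\ref{thm:bx2} also used $|B|=2$. If $c\in L_1\cap L_3$ with $b_3\parallel b_1$, then $b_1$ is not $\min\hat F_c$ even when $U_1=A$. This is why the paper needs separate up-beat, down-beat and cycle subcases in Case~(0c).

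More importantly, no overcount of $C_{ca}$ can finish the height-three case. Take
\begin{itemize}
\item $b_1<b_2$, $b_1<b_3$, $b_2\parallel b_3$;
\item $L_1=\{c_1,c_2\}$ and $L_2=L_3=C$;
\item $U_1=U_2=U_3=\{a_1,a_2,a_3\}$;
\item $a_4$ lying directly above all of $C$.
\end{itemize}
This poset has no beat points. Its face numbers are $10,31,34,12$, so $\chi=1$; equivalently $C_{ca}=12$, as \eqref{eq:2} requires. We have $T_{\mathrm{tr}}=9$ and $D=3$. The three naked edges $\{c_i,a_4\}$ satisfy every lower bound coming from Lemmas~\ref{lem:iso}, \ref{lem:beat}, \ref{lem:fullL}, and from the up-beat obstruction at $c_1,c_2$. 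Yet $[c_1,a_4]-[c_2,a_4]+[c_2,a_1]-[c_1,a_1]$ is non-bounding by Lemma~\ref{lem:naked}. So height three genuinely requires the naked-edge cycles of Lemma~\ref{lem:template}, as in the paper's Cases~(0a)--(0c), not just budget arithmetic.

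The same caution applies in the antichain stage. The configuration with pairs $(2,2),(2,4),(3,2)$ and $L_1\cup L_2=C$ (Pattern~6C) has $D=0$ and no beat points. Lemma~\ref{lem:template} is therefore unavailable, and the paper kills it with an explicit octahedral $2$-cycle on $\{c_2,c_3\}\times\{b_2,b_3\}\times\{a_3,a_4\}$. Your ``higher homology'' caveat is essential there, not optional.

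Minor point: the constant in your displayed formula for $D$ should be $9$, not $6$.
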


\begin{proof}
By Lemma~\ref{lem:size}, $m+n=7$ with $m,n\geq 3$, so
$\{m,n\}=\{3,4\}$. We treat $(m,n)=(3,4)$ in full, and obtain
$(m,n)=(4,3)$ by the following duality principle.

The map $X\mapsto X^{\mathrm{op}}$ preserves minimality,
homotopical triviality (since $\mathcal{K}(X^{\mathrm{op}})=\mathcal{K}(X)$),
and non-contractibility (Stong's core commutes with duality), and
sends layer vector $(|C|,|B|,|A|)=(3,3,4)$ to $(4,3,3)$. Every
lemma used below is self-dual or stated with its dual. Hence the
$(m,n)=(4,3)$ classification is obtained by order-dualizing the
three surviving $(3,4)$ types, yielding Types~I$^{\mathrm{op}}$,
II$^{\mathrm{op}}$, III$^{\mathrm{op}}$.

\smallskip
Henceforth we fix $(m,n)=(3,4)$,
$C=\{c_1,c_2,c_3\}$, $B=\{b_1,b_2,b_3\}$,
$A=\{a_1,a_2,a_3,a_4\}$.

\smallskip\noindent\textbf{Height reduction.}
By Theorem~\ref{thm:height}, $h(X)\leq 3$. With $|B_X|=3$, a chain of
length~$4$ in $X$ would force all three middles to form a $3$-chain
in $B$, excluded by Remark~\ref{rem:height-logic}. So $h(X)\in\{2,3\}$;
suppose $h(X)=3$. Then some maximal chain $c=z_0<z_1<z_2<z_3=a$ has
length~$3$ with $c\in C$, $a\in A$. Set $b_1=z_1$, $b_2=z_2$
(both in $B$). The third middle $b_3\in B\setminus\{b_1,b_2\}$ does
not extend $\{b_1,b_2\}$ to a $3$-chain in~$B$ (already excluded),
so it satisfies one of the following three exhaustive cases.

\smallskip
\noindent\emph{Case~(0a): $b_1<b_3$ and $b_2\parallel b_3$.}
Since $b_2>b_1$ and $b_3>b_1$, Lemma~\ref{lem:L31} applied to
$b_1<b_2$ and $b_1<b_3$ gives $|\hat{U}_{b_2}|,|\hat{U}_{b_3}|
\geq\beta_1+2$. Since $\hat{U}_{b_j}\subseteq C\cup\{b_1\}$ has
size at most $4$ and $\beta_1\geq 2$, equality forces $\beta_1=2$,
$L_1=\{c_1,c_2\}$, $L_2=C$, and $L_3=C$ (so $\hat{U}_{b_2}
=\hat{U}_{b_3}=C\cup\{b_1\}$).

By transitivity $U_2\cup U_3\subseteq U_1$, so every
$a^*\in A\setminus U_1$ has $s(a^*)=0$.

For each $c_i\in L_1$: if no $a^*\in A\setminus U_1$ lies above
$c_i$, then $\hat{F}_{c_i}=\{b_1,b_2,b_3\}\cup U_1$ with
$b_1=\min$: up-beat. Hence some $c_i\in L_1$ has a naked edge to
$a^*\in A\setminus U_1$. Apply Lemma~\ref{lem:iso} to $a^*$ (with
$s(a^*)=0$): $\geq 2$ elements of $C$ below $a^*$, all naked; let
$c_j\neq c_i$ be a second such.

\textit{Sub-case (i): $c_j\in L_1$.} Pick $a_r\in U_1$; both
$c_i,c_j$ reach $a_r$ via $b_1$. Form
$Z=[c_i,a^*]-[c_j,a^*]+[c_j,a_r]-[c_i,a_r]$: $[Z]\neq 0$.

\textit{Sub-case (ii): $c_j\notin L_1$.} Since $\beta_1=2$ and
$|C|=3$, $c_j=c_3$ is forced; $L_2=C$ gives $c_3<b_2$. Pick
$a_r\in U_2\subseteq U_1$; $c_i$ via $b_1$, $c_3$ via $b_2$
reach $a_r$ transitively. Form
$Z=[c_i,a^*]-[c_3,a^*]+[c_3,a_r]-[c_i,a_r]$: $[Z]\neq 0$.

In both sub-cases $H_1\neq 0$. Hence Case~(0a) is impossible.

\smallskip
\noindent\emph{Case~(0b): $b_3<b_2$ and $b_3\parallel b_1$.}
$E_B=\{(1,2),(3,2)\}$. Lemma~\ref{lem:L31} (dual) at $b_1<b_2$ and
$b_3<b_2$ gives $\alpha_1\geq\alpha_2+1$, $\alpha_3\geq\alpha_2+1$;
also $U_2\subseteq U_1\cap U_3$.

\noindent\textit{Sub-step: $\alpha_2=2$.}
Suppose $\alpha_2\geq 3$; the dual-L31 inequalities $\alpha_1,\alpha_3
\geq\alpha_2+1$ together with $\alpha_j\leq|A|=4$ force
$\alpha_2=3$ and $\alpha_1=\alpha_3=4$, hence $U_1=U_3=A$.
For $c\in L_1\setminus L_3$: $\hat{F}_c=\{b_1,b_2\}\cup A$ with
$b_1=\min$: up-beat. So $L_1\subseteq L_3$, and symmetrically
$L_1=L_3$.

\emph{If $L_1=C$:} $\beta_1=\beta_3=3$ and $L_2\supseteq L_1=C$,
so $L_2=C$. Lemma~\ref{lem:euler3} with these parameters
yields LHS$=8$, $C_{ca}=14>mn=12$: contradiction.

\emph{If $L_1\subsetneq C$:} Pick $e\in L_1$, $a_r\in U_2$. For
$c\in C\setminus L_1=C\setminus L_3$:
if $c\in L_2$: either $\hat{F}_c=\{b_2\}\cup U_2$ ($b_2=\min$,
up-beat) or some $c<a^*\in A\setminus U_2$ is naked, and
Lemma~\ref{lem:template} at $(e,c,a^*,a_r)$ gives $[Z]\neq 0$;
if $c\notin L_2$ ($t(c)=0$), Lemma~\ref{lem:iso} gives
$c<a_p,a_q$ naked, and Lemma~\ref{lem:template} gives $[Z]\neq 0$.
Hence $\alpha_2=2$, $\alpha_1,\alpha_3\geq 3$.

\noindent\textit{Sub-step: $\alpha_1=\alpha_3=3$.}
Suppose $\alpha_1=4$, $U_1=A$. For $c\in L_1\setminus L_3$:
$\hat{F}_c=\{b_1,b_2\}\cup A$ with $b_1=\min$: up-beat. So
$L_1\subseteq L_3$.

If $L_3=C$: $L_2=C$ and $\hat{U}_a\supseteq\{b_1,b_2,b_3\}\cup C$
with $b_2=\max$: down-beat. If $L_3\neq C$: pick $c'\in C\setminus
L_3$ ($c'\notin L_1$). If $c'\in L_2$: either
$\hat{F}_{c'}=\{b_2\}\cup U_2$ with $b_2=\min$ (up-beat at $c'$,
contradiction), or some $a^*\in A\setminus U_2$ has $c'<a^*$ naked,
in which case Lemma~\ref{lem:template} applied at
$(e,c',a^*,a_r)$ with $e\in L_1$, $a_r\in U_2$ gives $[Z]\neq 0$.
If $c'\notin L_2$ ($t(c')=0$): Lemma~\ref{lem:iso} gives distinct
$a_p,a_q\in A$ with $c'<a_p,a_q$ naked; since $e<b_1<a_p,a_q$
transitively, Lemma~\ref{lem:template} with $(e,c',a_p,a_q)$
(naked edge $\{c',a_p\}$, common bound $a_q$) gives $[Z]\neq 0$.

In all cases $\alpha_1=4$ is impossible. Hence $\alpha_1=3$;
$\alpha_3=3$ similarly.

\noindent\textit{Sub-step~(0b-1): $L_1=L_2=L_3$.}
At this point $\alpha_1=\alpha_3=3$, $\alpha_2=2$, and
$U_2\subseteq U_1\cap U_3$. We prove $L_1=L_3$, then $L_1=L_2$.

\smallskip
\noindent\textbf{(0b-1.A) $L_1=L_3$.}
Suppose $c\in L_1\setminus L_3$. Since $|U_1|=3$ and $|A|=4$, write
$A\setminus U_1=\{a_4\}$. Note $a_4\notin U_2$ (from $U_2\subseteq U_1$),
so $s(a_4)\in\{0,1\}$ (only $b_3$ may lie below~$a_4$). Split on
whether $c<a_4$.

\emph{Case $c\not<a_4$.} Using $L_1\subseteq L_2$ and the cover
relations, $\hat{F}_c=\{b_1,b_2\}\cup U_1$. Since $b_1<b_2$ and
$b_1<a$ for every $a\in U_1$, $b_1=\min(\hat{F}_c)$: $c$ is an
up-beat point, contradiction.

\emph{Case $c<a_4$.} The edge $\{c,a_4\}$ is naked ($a_4\notin
U_1\cup U_2$ blocks $b_1,b_2$-routes, $c\notin L_3$ blocks $b_3$).
From $s(a_4)\in\{0,1\}$, Lemma~\ref{lem:iso} (if $s(a_4)=0$) or
Lemma~\ref{lem:beat} (if $s(a_4)=1$, using $C\setminus L_3\ni c$
as rescue) yields $c'\neq c$ with $\{c',a_4\}$ naked. Choose
$a'\in A\setminus\{a_4\}\subseteq U_1$ specifically so that
$\{c',a'\}$ is an edge: pick $a'\in U_1\setminus\{a_4\}$ if
$c'\in L_1$ (transitive via $b_1$), $a'\in U_1\cap U_3$ if
$c'\in L_3\setminus L_1$ (transitive via $b_3$, where $U_1\cap
U_3\supseteq U_2\neq\emptyset$), $a'\in U_2$ if $c'\in L_2
\setminus(L_1\cup L_3)$ (transitive via $b_2$), and any
$a'\in U_1\setminus\{a_4\}$ if $t(c')=0$ (with $\{c',a'\}$ naked
if $c'<a'$ in $X$, harmless for the cycle). Apply
Lemma~\ref{lem:template} at $(c,c',a_4,a')$:
\[
Z\;=\;[c,a_4]\;-\;[c',a_4]\;+\;[c',a']\;-\;[c,a'].
\]
Edges: $\{c,a_4\}$ and $\{c',a_4\}$ naked; $\{c',a'\}$ as above;
$\{c,a'\}$ transitive via $b_1$ ($c\in L_1$, $a'\in U_1$). The
naked edge $\{c,a_4\}$ has coefficient $+1\neq 0$, so
Lemma~\ref{lem:naked} gives $[Z]\neq 0$: contradiction.

Hence $L_1\subseteq L_3$. By the symmetric argument (swapping
$b_1\leftrightarrow b_3$), $L_3\subseteq L_1$, so $L_1=L_3$.

\smallskip
\noindent\textbf{(0b-1.B) $L_1=L_2$.}
We have $L_1\subseteq L_2$ from $b_1<b_2$. Suppose $c\in L_2\setminus L_1$;
then $c\notin L_3$ (Part~A). Since $|U_2|=2$, $|A|=4$, and
$U_2\subseteq U_1$ with $|U_1|=3$, write $A\setminus U_2=\{a_3,a_4\}$
with $a_3\in U_1$ and $a_4\notin U_1$. Three sub-cases.

\emph{B1: $c\not<a_3$ and $c\not<a_4$.} Then
$\hat{F}_c=\{b_2\}\cup U_2$ with $b_2=\min$: up-beat, contradiction.

\emph{B2: $c<a_3$ (naked, since $a_3\notin U_2$ blocks $b_2$ and
$c\notin L_1\cup L_3$ blocks $b_1,b_3$).} Pick $e\in L_1$,
$a_r\in U_2$. Apply Lemma~\ref{lem:template} at $(e,c,a_3,a_r)$:
\[
Z\;=\;[e,a_3]\;-\;[c,a_3]\;+\;[c,a_r]\;-\;[e,a_r].
\]
Edges: $\{e,a_3\}$ transitive via $b_1$ ($e\in L_1$, $a_3\in U_1$);
$\{c,a_3\}$ naked; $\{c,a_r\}$ transitive via $b_2$ ($c\in L_2$,
$a_r\in U_2$); $\{e,a_r\}$ transitive via $b_2$ ($e\in L_1\subseteq
L_2$, $a_r\in U_2$). $[Z]\neq 0$ by Lemma~\ref{lem:naked}:
contradiction.

\emph{B3: $c<a_4$ (naked, as $a_4\notin U_1\cup U_2$ and $c\notin
L_3$).} As in Part~A, $s(a_4)\in\{0,1\}$, so Lemma~\ref{lem:iso}
or~\ref{lem:beat} at $a_4$ yields $c''\neq c$ with $\{c'',a_4\}$
naked. Pick $a_r\in U_2$. Apply Lemma~\ref{lem:template} at
$(c,c'',a_4,a_r)$:
\[
Z\;=\;[c,a_4]\;-\;[c'',a_4]\;+\;[c'',a_r]\;-\;[c,a_r].
\]
Edges: $\{c,a_4\}$ and $\{c'',a_4\}$ naked; $\{c,a_r\}$ transitive
via $b_2$; $\{c'',a_r\}$ transitive (via $b_1, b_3,$ or $b_2$
depending on which $L_j$ contains $c''$, all using $a_r\in U_2
\subseteq U_1\cap U_3$) or naked if $t(c'')=0$. $[Z]\neq 0$:
contradiction.

Hence $L_2\subseteq L_1$, and combined with $L_1\subseteq L_2$,
$L_1=L_2$. Combining (A) and (B), $L_1=L_2=L_3=:L$ with
$\beta:=|L|\geq 2$.

\noindent\textit{Sub-step (0b-2): Euler formula contradiction.}
With $L_j=L$ for all $j$ and $\alpha_1=\alpha_3=3$, $\alpha_2=2$,
$\beta_1=\beta_2=\beta_3=\beta$: the simplex counts are
$f_0=10$, $f_1=3\beta+10+C_{ca}$, $f_2=10\beta+4$, $f_3=4\beta$.
Setting $\chi=1$ gives $C_{ca}=3\beta+3$.

Since $U_2\subseteq U_1\cap U_3$ and $|U_2|=2\leq|U_1\cap U_3|\leq 3$:
\emph{Sub-case $|U_1\cap U_3|=2$:} $|U_1\cup U_3|=4$, $T_{\mathrm{tr}}=4\beta$, $D=3-\beta$.
If $\beta=2$: $D=1$ but the isolated minimal $c_3\notin L$ needs $\geq 2$
naked edges by Lemma~\ref{lem:iso}: contradiction.
If $\beta=3$: $D=0$. Elements of $U_1\setminus U_3$ and $U_3\setminus U_1$
(each of size~$1$) have $s=1$ above $b_1$ or $b_3$ only; since $L_1=L_3=C$,
Lemma~\ref{lem:fullL} gives a down-beat point: contradiction.
\emph{Sub-case $|U_1\cap U_3|=3$:} $U_1=U_3$, $T_{\mathrm{tr}}=3\beta$, $D=3$.
The unique $a_4\in A\setminus U_1$ has $s(a_4)=0$. Lemma~\ref{lem:iso}
gives distinct $c_i,c_j\in C$ below $a_4$ (naked).

If $\beta=2$: let $c_3\in C\setminus L$. If $c_3\not<a_4$: both
$c_i,c_j\in L$, so $c_i<b_1<a'$ transitive for $a'\in U_1$;
Lemma~\ref{lem:template} with $(c_i,c_j,a_4,a')$ gives $[Z]\neq 0$.
If $c_3<a_4$: by the dual of Lemma~\ref{lem:iso} applied to $c_3$
(with $t(c_3)=0$), at least two elements of $A$ lie above $c_3$.
One is $a_4$; let $a''\in A$ be a second, $a''\neq a_4$. Since
$|A\setminus U_1|=1=\{a_4\}$, necessarily $a''\in U_1$.
Then $c_3<a''$ exists in $X$ and is naked (since $c_3\notin L=L_1$,
no transitive path $c_3<b_j<a''$ exists). Let $c_i\in L$ be one of
the two forced below $a_4$; $c_i<b_1<a''$ transitive.
Lemma~\ref{lem:template} with $(c_3,c_i,a_4,a'')$ gives $[Z]\neq 0$.

If $\beta=3$: $c_i,c_j\in L_1=C$, so $c_i,c_j<b_1$ transitively.
Form $Z=[c_i,a_4]-[c_j,a_4]+[c_j,b_1]-[c_i,b_1]$:
$\partial Z=0$; $\{c_i,a_4\},\{c_j,a_4\}$ naked; $[Z]\neq 0$ by
Lemma~\ref{lem:naked}: contradiction.

In every sub-case $h(X)=3$ is impossible in Case~(0b).

\smallskip
\noindent\emph{Case~(0c): $b_3\parallel b_1$ and $b_3\parallel b_2$.}
We have $b_1<b_2$ and $b_3$ incomparable to both $b_1$ and $b_2$,
giving $h(X)=3$.

\smallskip\noindent\textit{Step~1 for Case~(0c): degree constraints.}
Since $\hat{U}_{b_1}=L_1$ and $\hat{U}_{b_2}\subseteq C\cup\{b_1\}$
(as $b_3\parallel b_2$), Lemma~\ref{lem:L31} gives
$|\hat{U}_{b_2}|\geq\beta_1+2$. With $|\hat{U}_{b_2}|\leq m+1=4$ and
$\beta_1\geq 2$: equality forces $\beta_1=2$ and $|\hat{U}_{b_2}|=4$,
giving $L_2=C$ ($\beta_2=3$). The dual of Lemma~\ref{lem:L31} for
$b_1<b_2$ gives $\alpha_1\geq\alpha_2+1$.

We show $\alpha_1\leq 3$ by ruling out $\alpha_1=4$. Suppose
$\alpha_1=4$, so $U_1=A$.

\textit{(a) $L_1\not\subseteq L_3$:} pick $c\in L_1\setminus L_3$.
$\hat{F}_c=\{b_1,b_2\}\cup A$ with $b_1=\min$: up-beat.

\textit{(b) $L_1\subseteq L_3$ and $U_2\not\subseteq U_3$:} pick
$a\in U_2\setminus U_3$. $\hat{U}_a=\{b_1,b_2\}\cup C$ with
$b_2=\max$: down-beat.

\textit{(c) $L_1\subseteq L_3$ and $U_2\subseteq U_3$:} let $c_3$
be the unique element of $C\setminus L_1$. Lemma~\ref{lem:euler3}
with $E_B=\{(1,2)\}$ gives
$C_{ca}=8+\alpha_2+(\beta_3-1)(\alpha_3-1)$.
Since $U_1=A$, $M(c_1)=M(c_2)=A$. For $c_3\in L_2=C$ with
$c_3\notin L_1$: $M(c_3)=U_2$ if $c_3\notin L_3$, else
$M(c_3)=U_3$ (using $U_2\subseteq U_3$). Hence
$T_{\mathrm{tr}}=8+|M(c_3)|$.

If $c_3\notin L_3$: $L_3=L_1$, $\beta_3=2$, $|M(c_3)|=\alpha_2$,
so $D=\alpha_3-1\geq 1$. If $c_3\in L_3$: $L_3=C$, $\beta_3=3$,
$|M(c_3)|=\alpha_3$, so $D=\alpha_2+\alpha_3-2\geq 2$. Either way
$D\geq 1$, so at least one naked edge exists in
$E$. Since $M(c_1)=M(c_2)=A$, every naked edge has first
coordinate $c_3$; pick $a^*\in A\setminus M(c_3)$ with $(c_3,a^*)$
naked. Pick $a_r\in U_2$ and form
$Z=[c_3,a^*]-[c_1,a^*]+[c_1,a_r]-[c_3,a_r]$:
$\{c_3,a^*\}$ naked ($c_3\notin L_1$, $a^*\notin U_2,U_3$);
$\{c_1,a^*\},\{c_1,a_r\}$ transitive via $b_1$ ($U_1=A$);
$\{c_3,a_r\}$ transitive via $b_2$. $[Z]\neq 0$.

Sub-cases (a)–(c) exhaust $\alpha_1=4$, so $\alpha_1\leq 3$;
combined with $\alpha_1\geq\alpha_2+1\geq 3$, $\alpha_1=3$ and
$\alpha_2=2$.

\smallskip\noindent\textit{Step~2 for Case~(0c): Euler formula.}
Applying Lemma~\ref{lem:euler3} with $E_B=\{(1,2)\}$, $\beta_1=2$,
$\beta_2=3$, $\alpha_1=3$, $\alpha_2=2$:
\[
C_{ca}=9+(\beta_3-1)(\alpha_3-1).
\]
Since $\beta_3,\alpha_3\geq 2$: $C_{ca}\in\{10,11,12\}$.

\smallskip\noindent\textit{Step~3 for Case~(0c): contradiction.}
$\alpha_1=3$ gives a unique $a_4\in A\setminus U_1$.

If $a_4\notin U_3$ ($s(a_4)=0$): Lemma~\ref{lem:iso} gives
distinct $c_i,c_j<a_4$ naked, at least one in $L_1$. If both
$c_1,c_2<a_4$: pick $a_r\in U_1$ and form
$Z=[c_1,a_4]-[c_2,a_4]+[c_2,b_1]-[c_1,b_1]$, $[Z]\neq 0$. If
$c_1,c_3<a_4$: pick $a_r\in U_2$; $L_2=C$ gives $c_1,c_3<b_2<a_r$
transitive. Lemma~\ref{lem:template} at $(c_1,c_3,a_4,a_r)$:
$[Z]\neq 0$.

If $a_4\in U_3$ ($s(a_4)=1$): if $\beta_3=3$, Lemma~\ref{lem:fullL}
makes $a_4$ a down-beat point. If $\beta_3=2$,
Lemma~\ref{lem:beat} gives $c^*\in C\setminus L_3$ with $c^*<a_4$
naked. Pick $c_L\in L_3$; since $L_2=C$, $c^*,c_L<b_2$. Form
$Z=[c^*,a_4]-[c_L,a_4]+[c_L,b_2]-[c^*,b_2]$: $\{c^*,a_4\}$ naked,
$\{c_L,a_4\}$ transitive via $b_3$, others via $b_2$. $[Z]\neq 0$.

Hence $h(X)=3$ is impossible in Case~(0c).

\smallskip
Cases~(0a)--(0c) are mutually exclusive and exhaustive, so
$h(X)=2$ and $B$ is an antichain.
\smallskip\noindent\textbf{Degree constraints.}
Setting $\chi=1$ in~\eqref{eq:1} with $B$ an antichain and
$(m,n)=(3,4)$, and writing $k_j=(\beta_j-1)(\alpha_j-1)$:
\begin{equation}\label{eq:8}
C_{ca}=\sum_{j=1}^3 k_j+6,
\end{equation}
where $k_j\in\{1,2,3,4,6\}$ corresponding to the six valid pairs
$P_1=(2,2)$, $P_2=(2,3)$, $P_3=(3,2)$, $P_4=(2,4)$, $P_5=(3,3)$,
$P_6=(3,4)$ with $k$-values $1,2,2,3,4,6$ respectively (the bounds
$\beta_j\leq 3$, $\alpha_j\leq 4$). The constraint $C_{ca}\leq mn=12$
reduces to $\sum k_j\leq 6$. A direct enumeration of unordered
$3$-multisets of $\{P_1,\ldots,P_6\}$ satisfying this constraint
yields exactly $14$ classes, listed in Table~\ref{tab:pat}.

\begin{table}[ht]
\centering\footnotesize
\begin{tabular}{llccr}
\toprule
Pattern & Pairs $(\beta_j,\alpha_j)$ & $[\beta]$ & $[\alpha]$ & $C_{ca}$\\
\midrule
3A & $(2,2),(2,2),(2,2)$ & $[2,2,2]$ & $[2,2,2]$ & $9$\\
4A & $(2,2),(2,2),(2,3)$ & $[2,2,2]$ & $[2,2,3]$ & $10$\\
4B & $(2,2),(2,2),(3,2)$ & $[2,2,3]$ & $[2,2,2]$ & $10$\\
5A & $(2,2),(2,3),(2,3)$ & $[2,2,2]$ & $[2,3,3]$ & $11$\\
5B & $(2,2),(2,2),(2,4)$ & $[2,2,2]$ & $[2,2,4]$ & $11$\\
5C & $(2,2),(2,3),(3,2)$ & $[2,2,3]$ & $[2,3,2]$ & $11$\\
5D & $(2,2),(3,2),(3,2)$ & $[2,3,3]$ & $[2,2,2]$ & $11$\\
6A & $(2,2),(2,2),(3,3)$ & $[2,2,3]$ & $[2,2,3]$ & $12$\\
6B & $(2,2),(2,3),(2,4)$ & $[2,2,2]$ & $[2,3,4]$ & $12$\\
6C & $(2,2),(2,4),(3,2)$ & $[2,2,3]$ & $[2,4,2]$ & $12$\\
6D & $(2,3),(2,3),(2,3)$ & $[2,2,2]$ & $[3,3,3]$ & $12$\\
6E & $(2,3),(2,3),(3,2)$ & $[2,2,3]$ & $[3,3,2]$ & $12$\\
6F & $(2,3),(3,2),(3,2)$ & $[2,3,3]$ & $[3,2,2]$ & $12$\\
6G & $(3,2),(3,2),(3,2)$ & $[3,3,3]$ & $[2,2,2]$ & $12$\\
\bottomrule
\end{tabular}
\caption{All fourteen admissible degree patterns for $|B_X|=3$,
$(m,n)=(3,4)$. Each entry gives the multiset of pairs
$(\beta_j,\alpha_j)$ for $j=1,2,3$, the sorted multiset
$[\beta]=\{\beta_1,\beta_2,\beta_3\}$ of $\beta$-values, the
\emph{index-ordered} sequence $[\alpha]=(\alpha_1,\alpha_2,\alpha_3)$
of $\alpha$-values matching the order of pairs in the second column,
and $C_{ca}=\sum_j\beta_j\alpha_j-\sum_j\beta_j-\sum_j\alpha_j+9$.
The pairs in the second column are listed in non-decreasing
$\beta$-order, so the $\beta$-multiset is automatically sorted; the
$\alpha$-sequence is left in the same index order to make
$\alpha_j$ readable directly off the $j$-th pair (in particular,
patterns~5C, 6C, 6E, 6F retain visibly distinct $\alpha$-orderings
even when the underlying multisets coincide).
Patterns~5B, 5C, 5D, and~6C are eliminated by quick arguments;
the remaining ten patterns are analysed by orbit enumeration below.}
\label{tab:pat}
\end{table}

\smallskip\noindent\textbf{Orbit enumeration.}
Fix a canonical ordering $(\beta_1,\alpha_1),(\beta_2,\alpha_2),
(\beta_3,\alpha_3)$ of the degree pair multiset lexicographically.

\noindent\textit{Reading convention for sub-orbits.}
Each sub-orbit specifies the upper sets $U_1,U_2,U_3\subseteq A$
and the lower sets $L_1,L_2,L_3\subseteq C$ explicitly. From these,
all values $s(a_j)=|\{k:a_j\in U_k\}|$ and $t(c_i)=|\{k:c_i\in L_k\}|$
are read off directly. The notation ``$s(a_j)=1$ (above $b_k$,
$c^*\notin L_k$)'' means: $a_j\in U_k$ and $a_j\notin U_{k'}$
for $k'\neq k$ (so $s(a_j)=1$ with $b_k$ the unique middle below $a_j$),
and $c^*\in C\setminus L_k$ so that Lemma~\ref{lem:beat} forces
the naked edge $c^*<a_j$. An edge $c<b_k<a$ is \emph{transitive}
whenever $c\in L_k$ and $a\in U_k$; the transitive path
$c<b_k<a$ witnesses $\{c,a\}\notin E$.

\smallskip\noindent\textit{Strategy.} Each orbit is eliminated by one
of three methods, applied in this priority order:
\begin{enumerate}[label=\textup{(\alph*)}]
\item \emph{Budget contradiction:} the number of naked edges forced
  by Lemmas~\ref{lem:iso} and~\ref{lem:beat} (using
  Convention~\ref{conv:distinct}) strictly exceeds $D=C_{ca}-T_{\mathrm{tr}}$.
\item \emph{Beat-point:} some element is forced to be a down-beat
  or up-beat point by Lemma~\ref{lem:fullL} or direct inspection,
  contradicting minimality.
\item \emph{Naked-edge $1$-cycle:} a cycle $Z$ is explicitly
  constructed and every edge is verified to be either transitive
  (with $c\in L_j$ and $a\in U_j$ given for the relevant $b_j$)
  or naked (with the specific transitive paths checked to be absent).
  Lemma~\ref{lem:naked} then gives $[Z]\neq 0$, contradicting
  homotopical triviality.
\end{enumerate}
An edge $\{c,a\}$ is asserted naked only when supported by one of:
$s(a)=0$ (no middle below $a$, so all edges to $a$ are naked),
$t(c)=0$ (dually, $M(c)=\emptyset$), or $s(a)=1$ with the unique
middle $b_k<a$ satisfying $c\notin L_k$ (Lemma~\ref{lem:beat});
no other naked-edge claims are made.
For each surviving orbit, we verify that $T_{\mathrm{tr}}=C_{ca}$ (so $D=0$ and
no naked edge exists) and that no element is a beat point.
Two incidence structures $(L_1,L_2,L_3;\,U_1,U_2,U_3)$ and
$(L'_1,L'_2,L'_3;\,U'_1,U'_2,U'_3)$ carrying the same ordered
degree sequence are \emph{isomorphic} if there exist
$\sigma\in S_3$ preserving degree pairs, a bijection $\phi:C\to C$,
and a bijection $\psi:A\to A$, such that $L'_{\sigma(j)}=\phi(L_j)$
and $U'_{\sigma(j)}=\psi(U_j)$ for all $j$. Since every contradiction
depends only on the $(L_j,U_j)$ structure, the argument for one
representative applies to every isomorphic structure.

\smallskip\noindent\textbf{Pattern~3A: $\beta=[2,2,2]$, $\alpha=[2,2,2]$,
$C_{ca}=9$.}

Three $L$-orbits under $S_3$:
(3A-1) all equal: $L_1=L_2=L_3=\{c_1,c_2\}$;
(3A-2) two equal: $L_1=L_2=\{c_1,c_2\}$, $L_3=\{c_1,c_3\}$;
(3A-3) all distinct: $L_1=\{c_1,c_2\}$, $L_2=\{c_1,c_3\}$, $L_3=\{c_2,c_3\}$.

\noindent\textit{Orbit~(3A-1): $L_1=L_2=L_3=\{c_1,c_2\}$.}

$t(c_3)=0$, $M(c_3)=\emptyset$; every edge from $c_3$ to $A$ is naked.
Lemma~\ref{lem:iso} (applied to $c_3$) gives distinct $a_p,a_q\in A$
with $c_3<a_p$ and $c_3<a_q$ nakedly.

(3A-1-a) $U_1=U_2=U_3=\{a_1,a_2\}$. $T_{\mathrm{tr}}=4$, $D=5$.
Since $s(a_3)=s(a_4)=0$ and $t(c_3)=0$: every edge from $c_3$ to
$A$ is naked ($M(c_3)=\emptyset$), and every edge from $C$ to
$a_3$ or $a_4$ is naked (no middle lies below $a_3$ or $a_4$). By
the dual of Lemma~\ref{lem:iso}, $c_3$ lies below at least two
elements of~$A$; we split on whether any such element lies in
$U_1=\{a_1,a_2\}$.

\emph{If $c_3<a_s$ for some $a_s\in\{a_1,a_2\}$}: the dual of
Lemma~\ref{lem:iso} gives a second element $a_t\neq a_s$ with
$c_3<a_t$ in~$X$ (naked since $M(c_3)=\emptyset$). If
$a_t\in\{a_1,a_2\}$: pick $c'\in L_1=\{c_1,c_2\}$; then
$c'<b_1<a_s$ and $c'<b_1<a_t$ are transitive ($c'\in L_1$,
$a_s,a_t\in U_1$). Form $Z=[c_3,a_s]-[c',a_s]+[c',a_t]-[c_3,a_t]$:
$\{c_3,a_s\}$ and $\{c_3,a_t\}$ naked; $\{c',a_s\}$ and
$\{c',a_t\}$ transitive via~$b_1$; $[Z]\neq 0$. If
$a_t\in\{a_3,a_4\}$ (so $s(a_t)=0$): Lemma~\ref{lem:iso} applied
to $a_t$ gives $c'\in\{c_1,c_2\}=L_1$ with $c'<a_t$ naked
(since $c_3$ accounts for one of the $\geq 2$ forced, the second
lies in $\{c_1,c_2\}$). Then $c'<b_1<a_s$ is transitive
($c'\in L_1$, $a_s\in U_1$). Form
$Z=[c_3,a_t]-[c',a_t]+[c',a_s]-[c_3,a_s]$: $\{c_3,a_t\}$ and
$\{c',a_t\}$ naked ($s(a_t)=0$); $\{c',a_s\}$ transitive
via~$b_1$; $\{c_3,a_s\}$ naked ($M(c_3)=\emptyset$). By
Lemma~\ref{lem:naked}, $[Z]\neq 0$. In either sub-case:
contradiction.
\emph{If $c_3$ lies below no element of $\{a_1,a_2\}$}: then the
$\geq 2$ elements of~$A$ above $c_3$ both lie in $\{a_3,a_4\}$,
giving $c_3<a_3$ and $c_3<a_4$ (both naked). Lemma~\ref{lem:iso}
applied to $a_3$ gives $c_i\in\{c_1,c_2\}=L_1$ with $c_i<a_3$
naked (since $c_3$ accounts for one of the $\geq 2$ forced below
$a_3$, the second lies in $\{c_1,c_2\}$). Similarly,
Lemma~\ref{lem:iso} applied to $a_4$ gives
$c_k\in\{c_1,c_2\}=L_1$ with $c_k<a_4$ naked. If $c_i=c_k$:
form $Z=[c_3,a_3]-[c_i,a_3]+[c_i,a_4]-[c_3,a_4]$; all four
edges are naked ($s(a_3)=s(a_4)=0$ and $M(c_3)=\emptyset$); by
Lemma~\ref{lem:naked}, $[Z]\neq 0$: contradiction. If
$c_i\neq c_k$ (so $\{c_i,c_k\}=\{c_1,c_2\}$ and both lie
in~$L_1$): pick $a_1\in U_1$ and form the $1$-cycle
\[
Z=[c_i,a_3]-[c_3,a_3]+[c_3,a_4]-[c_k,a_4]+[c_k,a_1]-[c_i,a_1].
\]
A direct computation gives $\partial Z=0$. The edges $\{c_i,a_3\}$
and $\{c_3,a_3\}$ are naked ($s(a_3)=0$); $\{c_3,a_4\}$ and
$\{c_k,a_4\}$ are naked ($s(a_4)=0$); $\{c_k,a_1\}$ and
$\{c_i,a_1\}$ are transitive via~$b_1$ ($c_i,c_k\in L_1$,
$a_1\in U_1$). Since $\{c_3,a_3\}$ lies in no triangle boundary
($s(a_3)=0$), Lemma~\ref{lem:naked} gives $[Z]\neq 0$:
contradiction.

(3A-1-b) $U_1=U_2=\{a_1,a_2\}$, $U_3=\{a_1,a_3\}$. $T_{\mathrm{tr}}=6$, $D=3$.
$s(a_4)=0$: Lemma~\ref{lem:iso} gives $c_i,c_j<a_4$ naked.
$s(a_3)=1$ above $b_3$ only, $c_3\notin L_3$: Lemma~\ref{lem:beat}
forces $c_3<a_3$ naked. At least one of $c_i,c_j$, say $c_i$,
lies in $L_3=\{c_1,c_2\}$; then $c_i<b_3<a_3$ transitive.
Form $Z=[c_3,a_4]-[c_i,a_4]+[c_i,a_3]-[c_3,a_3]$:
$\{c_3,a_4\}$ and $\{c_i,a_4\}$ naked; $\{c_i,a_3\}$ transitive;
$\{c_3,a_3\}$ naked. By Lemma~\ref{lem:naked}, $[Z]\neq 0$.

(3A-1-c) $U_1=U_2=\{a_1,a_2\}$, $U_3=\{a_3,a_4\}$. $T_{\mathrm{tr}}=8$, $D=1$.
$s(a_3)=s(a_4)=1$ (above $b_3$, $c_3\notin L_3$): Lemma~\ref{lem:beat} forces two distinct naked edges
(same pattern, different $a$); $|E|\geq 2>1=D$: impossible.

(3A-1-d) $U_1=\{a_1,a_2\}$, $U_2=\{a_2,a_3\}$, $U_3=\{a_3,a_4\}$.
$T_{\mathrm{tr}}=8$, $D=1$. $s(a_1)=1$ (above $b_1$, $c_3\notin L_1$) and
$s(a_4)=1$ (above $b_3$, $c_3\notin L_3$): Lemma~\ref{lem:beat} forces two distinct naked edges;
$|E|\geq 2>1=D$: impossible.

(3A-1-e) $U_1=\{a_1,a_2\}$, $U_2=\{a_1,a_3\}$, $U_3=\{a_2,a_3\}$.
$T_{\mathrm{tr}}=6$, $D=3$. $s(a_4)=0$: Lemma~\ref{lem:iso} gives
$c_i,c_j<a_4$ naked. If $c_i,c_j\in\{c_1,c_2\}$: both lie in
$L_1$, so $c_i<b_1<a_1$ and $c_j<b_1<a_1$ are transitive; form
$Z=[c_i,a_4]-[c_j,a_4]+[c_j,a_1]-[c_i,a_1]$ with
$\{c_i,a_4\},\{c_j,a_4\}$ naked and $\{c_j,a_1\},\{c_i,a_1\}$
transitive; $[Z]\neq 0$ by Lemma~\ref{lem:naked}. If $c_j=c_3$
(so $c_i\in\{c_1,c_2\}$): by the dual of Lemma~\ref{lem:iso},
$c_3$ lies below $\geq 2$ elements of $A$; since $a_4$ is one,
some $a'\in\{a_1,a_2,a_3\}$ also satisfies $c_3<a'$ (naked, as
$M(c_3)=\emptyset$). Then $c_i<b_k<a'$ is transitive for any $k$
with $a'\in U_k$ (such $k$ exists since
$U_1\cup U_2\cup U_3=\{a_1,a_2,a_3\}\ni a'$). Form
$Z=[c_3,a_4]-[c_i,a_4]+[c_i,a']-[c_3,a']$:
$\{c_3,a_4\},\{c_i,a_4\}$ naked ($s(a_4)=0$); $\{c_i,a'\}$
transitive; $\{c_3,a'\}$ naked ($M(c_3)=\emptyset$). By
Lemma~\ref{lem:naked}, $[Z]\neq 0$.

(3A-1-f) $U_1=\{a_1,a_2\}$, $U_2=\{a_1,a_3\}$, $U_3=\{a_1,a_4\}$.
$T_{\mathrm{tr}}=8$, $D=1$. $s(a_2)=1$ (above $b_1$, $c_3\notin L_1$) and
$s(a_3)=1$ (above $b_2$, $c_3\notin L_2$): Lemma~\ref{lem:beat} forces two distinct naked edges;
$|E|\geq 2>1=D$: impossible.

\noindent\textit{Orbit~(3A-2): $L_1=L_2=\{c_1,c_2\}$,
$L_3=\{c_1,c_3\}$.}
$M(c_1)=U_1\cup U_2\cup U_3$, $M(c_2)=U_1\cup U_2$, $M(c_3)=U_3$.

(3A-2-a) $U_1=U_2=U_3=\{a_1,a_2\}$. $T_{\mathrm{tr}}=6$, $D=3$.
$s(a_3)=s(a_4)=0$: Lemma~\ref{lem:iso} applied to each of $a_3,a_4$
forces $\geq 2$ naked edges per element (distinct second coordinates):
$|E|\geq 4>3=D$. Impossible.

(3A-2-b) $U_1=U_2=\{a_1,a_2\}$, $U_3=\{a_1,a_3\}$. $T_{\mathrm{tr}}=7$, $D=2$.
$s(a_4)=0$ forces $\geq 2$ naked edges to $a_4$ (Lemma~\ref{lem:iso});
$s(a_3)=1$ (above $b_3$, $c_2\notin L_3$): Lemma~\ref{lem:beat}
forces $c_2<a_3$ naked, distinct from the two edges to $a_4$
(different second coordinate). $|E|\geq 3>2=D$: impossible.

(3A-2-c) $U_1=U_2=\{a_1,a_2\}$, $U_3=\{a_3,a_4\}$. $T_{\mathrm{tr}}=8$, $D=1$.
$s(a_3)=s(a_4)=1$ (each above $b_3$ only, $c_2\notin L_3$):
Lemma~\ref{lem:beat} forces $c_2<a_3$ and $c_2<a_4$ naked.
These edges are distinct ($a_3\neq a_4$, so second coordinates differ).
$|E|\geq 2>1=D$: impossible.

(3A-2-d) $U_1=\{a_1,a_2\}$, $U_2=\{a_2,a_3\}$, $U_3=\{a_3,a_4\}$.
$T_{\mathrm{tr}}=9$, $D=0$. $s(a_4)=1$ (above $b_3$, $c_2\notin L_3$):
Lemma~\ref{lem:beat} forces $c_2<a_4$ naked; $|E|\geq 1>0=D$: impossible.

(3A-2-e) $U_1=\{a_1,a_2\}$, $U_2=\{a_1,a_3\}$, $U_3=\{a_2,a_3\}$.
$T_{\mathrm{tr}}=8$, $D=1$. $s(a_4)=0$: Lemma~\ref{lem:iso} forces
$\geq 2$ naked edges to $a_4$: $|E|\geq 2>1=D$. Impossible.

(3A-2-f) $U_1=\{a_1,a_2\}$, $U_2=\{a_1,a_3\}$, $U_3=\{a_1,a_4\}$.
$T_{\mathrm{tr}}=9$, $D=0$. $s(a_3)=1$ (above $b_2$, $c_3\notin L_2$):
Lemma~\ref{lem:beat} forces $c_3<a_3$ naked; $|E|\geq 1>0=D$: impossible.

(3A-2-g) $U_1=U_3=\{a_1,a_2\}$, $U_2=\{a_1,a_3\}$. $T_{\mathrm{tr}}=8$, $D=1$.
$s(a_4)=0$: Lemma~\ref{lem:iso} forces $\geq 2$ distinct naked edges from $C$ to $a_4$
(distinct since both have second coord $a_4$, different first coords);
$|E|\geq 2>1=D$: impossible.

(3A-2-h) $U_1=U_3=\{a_1,a_2\}$, $U_2=\{a_3,a_4\}$.
Then $M(c_1)=U_1\cup U_2\cup U_3=A$, $M(c_2)=U_1\cup U_2=A$, and
$M(c_3)=U_3=\{a_1,a_2\}$, so $T_{\mathrm{tr}}=4+4+2=10>9=C_{ca}$, contradicting
$D\geq 0$ (Convention~\ref{conv:distinct}). Impossible.

(3A-2-i) $U_1=\{a_1,a_2\}$, $U_2=\{a_3,a_4\}$, $U_3=\{a_1,a_3\}$.
$M(c_1)=A$, $M(c_2)=A$, $M(c_3)=\{a_1,a_3\}$. $T_{\mathrm{tr}}=10>9$. Impossible.

\noindent\textit{Orbit~(3A-3): $L_1=\{c_1,c_2\}$,
$L_2=\{c_1,c_3\}$, $L_3=\{c_2,c_3\}$.}
$M(c_i)=U_i\cup U_j$ for the two sets $U_k$ containing $b_k$ above $c_i$.

(3A-3-a) $U_1=U_2=U_3=\{a_1,a_2\}$. $T_{\mathrm{tr}}=6$, $D=3$.
$s(a_3)=s(a_4)=0$: Lemma~\ref{lem:iso} applied to each of $a_3,a_4$
forces $\geq 2$ naked edges per element: $|E|\geq 4>3=D$. Impossible.

(3A-3-b) $U_1=U_2=\{a_1,a_2\}$, $U_3=\{a_1,a_3\}$. $T_{\mathrm{tr}}=8$, $D=1$.
$s(a_4)=0$: Lemma~\ref{lem:iso} forces $\geq 2$ distinct naked edges from $C$ to $a_4$
(distinct since both have second coord $a_4$, different first coords);
$|E|\geq 2>1=D$: impossible.

(3A-3-c) $U_1=U_2=\{a_1,a_2\}$, $U_3=\{a_3,a_4\}$.
$M(c_1)=\{a_1,a_2\}$, $M(c_2)=M(c_3)=A$. $T_{\mathrm{tr}}=2+4+4=10>9$. Impossible.

(3A-3-d) $U_1=\{a_1,a_2\}$, $U_2=\{a_2,a_3\}$, $U_3=\{a_3,a_4\}$.
$M(c_1)=\{a_1,a_2,a_3\}$, $M(c_2)=A$, $M(c_3)=\{a_2,a_3,a_4\}$.
$T_{\mathrm{tr}}=3+4+3=10>9$. Impossible.

(3A-3-e) $U_1=\{a_1,a_2\}$, $U_2=\{a_1,a_3\}$, $U_3=\{a_2,a_3\}$.
$M(c_i)=\{a_1,a_2,a_3\}$ for all $i$. $T_{\mathrm{tr}}=9$, $D=0$.
$s(a_4)=0$: Lemma~\ref{lem:iso} forces $\geq 2$ naked edges to
$a_4$: $|E|\geq 2>0=D$. Impossible.

(3A-3-f) $U_1=\{a_1,a_2\}$, $U_2=\{a_1,a_3\}$, $U_3=\{a_1,a_4\}$.
$T_{\mathrm{tr}}=9$, $D=0$. $s(a_4)=1$ (above $b_3$, $c_1\notin L_3$):
Lemma~\ref{lem:beat} forces $c_1<a_4$ naked; $|E|\geq 1>0=D$: impossible.

Pattern~3A is completely eliminated.

\smallskip\noindent\textbf{Pattern~4A: $\beta=[2,2,2]$, $\alpha=[2,2,3]$,
$C_{ca}=10$.}

Assign $\alpha_3=3$, $|U_3|=3$. Degree-preserving permutations:
$\sigma=\mathrm{id}$ and $\sigma=(1\;2)$. Four $L$-orbits:
(4A-1) all equal: $L_1=L_2=L_3=\{c_1,c_2\}$;
(4A-2a) $L_1=L_2=\{c_1,c_2\}$, $L_3=\{c_1,c_3\}$;
(4A-2b) $L_1=L_3=\{c_1,c_2\}$, $L_2=\{c_1,c_3\}$;
(4A-3) all distinct: $L_1=\{c_1,c_2\}$, $L_2=\{c_1,c_3\}$, $L_3=\{c_2,c_3\}$.

\noindent\textit{Orbit~(4A-1): $L_1=L_2=L_3=\{c_1,c_2\}$.}

Here $t(c_3)=0$ and $M(c_3)=\emptyset$, so every edge from $c_3$ is
naked. Lemma~\ref{lem:iso} (dual) at $c_3$ gives distinct $a_p,a_q$
with $c_3<a_p,c_3<a_q$ naked; since $|U_3|=3,|A\setminus U_3|=1$,
relabel so $a_p\in U_3$. Choose $c_k\in\{c_1,c_2\}$ with $c_k<a_q$
(exists: if $s(a_q)\geq 1$ via $b_j$, then $c_k\in L_j=\{c_1,c_2\}$
gives $c_k<b_j<a_q$; if $s(a_q)=0$, Lemma~\ref{lem:iso} at $a_q$
gives $\geq 2$ predecessors, at least one in $\{c_1,c_2\}$). Form
$Z=[c_3,a_p]-[c_k,a_p]+[c_k,a_q]-[c_3,a_q]$:
$\{c_3,a_p\},\{c_3,a_q\}$ naked, $\{c_k,a_p\}$ transitive via $b_3$.
$[Z]\neq 0$. The construction depends only on
$t(c_3)=0$ and $M(c_3)=\emptyset$, forced by $L_1=L_2=L_3=\{c_1,c_2\}$
alone (so $c_3\notin L_j$ for every $j$); the specific
$(U_1,U_2,U_3)$ data is irrelevant, and the same cycle eliminates
all six $U$-sub-orbits of~(4A-1).

\noindent\textit{Orbit~(4A-2a): $L_1=L_2=\{c_1,c_2\}$,
$L_3=\{c_1,c_3\}$.}
$t(c_2)=2$ (below $b_1,b_2$), $t(c_3)=1$ (below $b_3$ only),
$M(c_3)=U_3$. Write $A\setminus U_3=\{a_4\}$. The residual
symmetry $\sigma=(b_1\;b_2)$ acts on $(U_1,U_2)$, so $U$-sub-orbits
are taken modulo this swap; we eliminate each by $T_{\mathrm{tr}}, D$
computation.
(4A-2a-a) $U_1=U_2=\{a_1,a_2\}$, $U_3=\{a_1,a_2,a_3\}$
($T_{\mathrm{tr}}=8$, $D=2$): $s(a_4)=0$ contributes $\geq 2$
naked edges (Lemma~\ref{lem:iso}); $s(a_3)=1$ above $b_3$,
$c_2\notin L_3$ forces $c_2<a_3$ naked (Lemma~\ref{lem:beat}):
$|E|\geq 3>2=D$, impossible.
(4A-2a-b) $U_1=\{a_1,a_2\}$, $U_2=\{a_1,a_3\}$, $U_3=\{a_1,a_2,a_3\}$
($T_{\mathrm{tr}}=9$, $D=1$): $s(a_4)=0$ forces $\geq 2$
naked edges (Lemma~\ref{lem:iso}), $|E|\geq 2>1=D$, impossible.
(4A-2a-c) $U_1=\{a_1,a_2\}$, $U_2=\{a_2,a_4\}$, $U_3=\{a_1,a_2,a_3\}$
($T_{\mathrm{tr}}=10$, $D=0$): $s(a_4)=1$ above $b_2$,
$c_3\notin L_2$, Lemma~\ref{lem:beat} forces $c_3<a_4$ naked,
$|E|\geq 1>0$, impossible.
(4A-2a-d) $U_1=\{a_1,a_2\}$, $U_2=\{a_3,a_4\}$, $U_3=\{a_1,a_2,a_3\}$
($T_{\mathrm{tr}}=11>10$): impossible.
(4A-2a-e) $U_1=\{a_1,a_4\}$, $U_2=\{a_2,a_4\}$, $U_3=\{a_1,a_2,a_3\}$
($T_{\mathrm{tr}}=10$, $D=0$): $s(a_3)=1$, Lemma~\ref{lem:beat} forces a naked edge, $|E|\geq 1>0$, impossible.
(4A-2a-f) $U_1=U_2=\{a_1,a_4\}$, $U_3=\{a_1,a_2,a_3\}$
($T_{\mathrm{tr}}=9$, $D=1$): $s(a_2)=s(a_3)=1$ above $b_3$,
$c_2\notin L_3$, Lemma~\ref{lem:beat} forces two distinct naked
edges from $c_2$, $|E|\geq 2>1=D$, impossible.

\noindent\textit{Orbit~(4A-2b): $L_1=L_3=\{c_1,c_2\}$,
$L_2=\{c_1,c_3\}$.}
$t(c_3)=1$ (below $b_2$ only); $M(c_3)=U_2$.
(4A-2b-a) $U_1=U_2=\{a_1,a_2\}$, $U_3=\{a_1,a_2,a_3\}$.
$T_{\mathrm{tr}}=8$, $D=2$. By Lemma~\ref{lem:iso} applied to $a_4$
($s(a_4)=0$): at least two distinct $c_i,c_j\in C$ satisfy
$\{c_i,a_4\},\{c_j,a_4\}\in E$ naked.
By the dual of Lemma~\ref{lem:beat} applied to $c_3$
($t(c_3)=1$ with $b_2$ the unique middle above $c_3$, since
$c_3\notin L_1\cup L_3$): at least one element $a'\in A\setminus U_2
=\{a_3,a_4\}$ has $\{c_3,a'\}\in E$ naked.

\emph{Case $a'=a_3$:} Then $\{c_3,a_3\}$ is naked. Combined with
the two naked edges $\{c_i,a_4\},\{c_j,a_4\}$ (distinct second
coordinate $a_4\neq a_3$), we obtain $|E|\geq 3>2=D$: contradiction.

\emph{Case $a'=a_4$:} Then $\{c_3,a_4\}$ is naked. From
$\{c_i,c_j\}\subseteq C$ with $c_i\neq c_j$, at least one of them,
say $c_k$, lies in $\{c_1,c_2\}\subseteq L_1$. Then $c_k<b_1<a_1$
gives $\{c_k,a_1\}$ transitive via $b_1$ ($c_k\in L_1$, $a_1\in U_1$),
and $c_3<b_2<a_1$ gives $\{c_3,a_1\}$ transitive via $b_2$
($c_3\in L_2$, $a_1\in U_2$). Form
$Z=[c_3,a_4]-[c_k,a_4]+[c_k,a_1]-[c_3,a_1]$:
$\{c_3,a_4\}$ and $\{c_k,a_4\}$ naked; $\{c_k,a_1\}$ and
$\{c_3,a_1\}$ transitive. By Lemma~\ref{lem:naked}, $[Z]\neq 0$.

Remaining sub-orbits: (4A-2b-b) $U_1=U_2=\{a_1,a_2\}$,
$U_3=\{a_1,a_3,a_4\}$ ($T_{\mathrm{tr}}=10$, $D=0$):
$s(a_3)=s(a_4)=1$ above $b_3$, $c_3\notin L_3$, so
Lemma~\ref{lem:beat} forces two distinct naked edges from $c_3$:
$|E|\geq 2>0=D$, impossible.
(4A-2b-c) $U_1=\{a_1,a_2\}$, $U_2=\{a_1,a_3\}$,
$U_3=\{a_1,a_2,a_3\}$ ($T_{\mathrm{tr}}=8$, $D=2$):
$s(a_4)=0$, eliminated by the same cycle as (4A-2b-a).
(4A-2b-d) $U_1=\{a_1,a_2\}$, $U_2=\{a_1,a_3\}$,
$U_3=\{a_1,a_2,a_4\}$ ($T_{\mathrm{tr}}=9$, $D=1$):
$s(a_3)=1$ above $b_2$ ($c_2\notin L_2$) and
$s(a_4)=1$ above $b_3$ ($c_3\notin L_3$):
Lemma~\ref{lem:beat} at each forces two distinct naked edges
(distinct second coordinates), $|E|\geq 2>1=D$, impossible.
(4A-2b-e) $U_1=\{a_1,a_3\}$, $U_2=\{a_1,a_4\}$,
$U_3=\{a_1,a_2,a_3\}$ ($T_{\mathrm{tr}}=10$, $D=0$):
$s(a_2)=1$ above $b_3$ ($c_3\notin L_3$),
Lemma~\ref{lem:beat} forces a naked edge, $|E|\geq 1>0$, impossible.
(4A-2b-f) $U_1=\{a_1,a_4\}$, $U_2=\{a_1,a_2\}$,
$U_3=\{a_1,a_2,a_3\}$ ($T_{\mathrm{tr}}=10$, $D=0$):
$s(a_4)=1$ above $b_1$ ($c_2\notin L_1$),
Lemma~\ref{lem:beat} forces a naked edge, $|E|\geq 1>0$, impossible.
(4A-2b-g) $U_1=\{a_1,a_2\}$, $U_2=\{a_3,a_4\}$,
$U_3=\{a_1,a_2,a_3\}$ ($T_{\mathrm{tr}}=9$, $D=1$):
$s(a_4)=1$ above $b_2$ ($c_2\notin L_2$): Lemma~\ref{lem:beat}
forces $c_2<a_4$ naked; $t(c_3)=1$ (below $b_2$ only):
the dual Lemma~\ref{lem:beat} at $c_3$ forces another naked edge
(distinct first coordinate), $|E|\geq 2>1=D$, impossible.
(4A-2b-h) $U_1=\{a_3,a_4\}$, $U_2=\{a_1,a_2\}$,
$U_3=\{a_1,a_2,a_3\}$ ($T_{\mathrm{tr}}=10$, $D=0$):
$s(a_2)=1$ above $b_2$ ($c_2\notin L_2$),
Lemma~\ref{lem:beat} forces a naked edge, $|E|\geq 1>0$, impossible.

\noindent\textit{Orbit~(4A-3): $L_1=\{c_1,c_2\}$,
$L_2=\{c_1,c_3\}$, $L_3=\{c_2,c_3\}$.}
(4A-3-a) $U_1=U_2=\{a_1,a_2\}$, $U_3=\{a_1,a_2,a_3\}$:
$T_{\mathrm{tr}}=8$, $D=2$; $s(a_4)=0$ uses $\geq 2$ of the
budget (Lemma~\ref{lem:iso}); $s(a_3)=1$ above $b_3$ with
$c_1\notin L_3$ forces another distinct naked edge to $a_3$
(Lemma~\ref{lem:beat}): $|E|\geq 3>2=D$, impossible.
(4A-3-b) $U_1=\{a_1,a_2\}$, $U_2=\{a_1,a_3\}$, $U_3=\{a_1,a_2,a_3\}$:
$T_{\mathrm{tr}}=9$, $D=1$; $s(a_4)=0$ forces $\geq 2$
naked edges (Lemma~\ref{lem:iso}): $|E|\geq 2>1=D$, impossible.
(4A-3-c) $U_1=\{a_1,a_2\}$, $U_2=\{a_1,a_4\}$, $U_3=\{a_1,a_2,a_3\}$:
$T_{\mathrm{tr}}=10$, $D=0$; $s(a_4)=1$ above $b_2$ ($c_2\notin L_2$),
Lemma~\ref{lem:beat} forces a naked edge from $c_2$, giving
$|E|\geq 1>0=D$: impossible.
(4A-3-d) $U_1=\{a_1,a_2\}$, $U_2=\{a_3,a_4\}$, $U_3=\{a_1,a_2,a_3\}$:
$T_{\mathrm{tr}}=11>10$: impossible.
(4A-3-e) $U_1=\{a_1,a_4\}$, $U_2=\{a_2,a_3\}$, $U_3=\{a_1,a_2,a_3\}$:
$T_{\mathrm{tr}}=11>10$: impossible.
(4A-3-f) $U_1=\{a_1,a_3\}$, $U_2=\{a_1,a_4\}$, $U_3=\{a_1,a_2,a_3\}$:
$T_{\mathrm{tr}}=10$, $D=0$; $s(a_2)=1$ above $b_3$,
$c_1\notin L_3$: Lemma~\ref{lem:beat} forces a naked edge;
$|E|\geq 1>0=D$: impossible.

Pattern~4A is completely eliminated.

\smallskip\noindent\textbf{Pattern~4B: $\beta=[2,2,3]$, $\alpha=[2,2,2]$,
$C_{ca}=10$.}

Assign $\beta_3=3$, so $L_3=C$. Observe first the following global
containment, which is used throughout both orbits below:
\[
U_3\subseteq U_1\cup U_2.
\]
Indeed, suppose $a\in U_3\setminus(U_1\cup U_2)$; then $s(a)=1$ with
$b_3$ the unique middle below $a$, and $L_3=C$ together with
Lemma~\ref{lem:fullL} forces $a$ to be a down-beat point,
contradicting minimality. Hence every element of $U_3$ also lies
in $U_1$ or $U_2$, giving the stated containment.

Two $L$-orbits: (4B-1)~$L_1=L_2=\{c_1,c_2\}$, $L_3=C$;
(4B-2)~$L_1=\{c_1,c_2\}$, $L_2=\{c_1,c_3\}$, $L_3=C$.

\noindent\textit{Orbit~(4B-1):} Recall the global containment
$U_3\subseteq U_1\cup U_2$ established at the start of Pattern~4B.
Since $L_3=C$ but $L_1=L_2=\{c_1,c_2\}$, we have $c_3\in L_3$ only,
giving $t(c_3)=1$ with $b_3$ the unique middle above $c_3$. By
Lemma~\ref{lem:beat} (applied to $c_3$), some $a'\in A\setminus U_3$
satisfies $c_3<a'$ nakedly.

If $|U_1\cup U_2|=2$: $s(a_3)=s(a_4)=0$; Lemma~\ref{lem:iso} gives
$c_i,c_j<a_3$ and $c_k,c_l<a_4$ (all naked). Pick $c_i,c_j\in C$
with $c_i\in L_3$; form $Z=[c_j,a_3]-[c_i,a_3]+[c_i,b_3]-[c_j,b_3]$:
$\{c_j,a_3\}$ and $\{c_i,a_3\}$ naked; $\{c_i,b_3\}$ and $\{c_j,b_3\}$
transitive. $[Z]\neq 0$.

If $|U_1\cup U_2|=3$: we compute $T_{\mathrm{tr}}$ and $D$ explicitly.
With $L_1=L_2=\{c_1,c_2\}$, $L_3=C$, and $U_3\subseteq U_1\cup U_2$:
$M(c_1)=M(c_2)=U_1\cup U_2\cup U_3=U_1\cup U_2$ (size $3$, since
$U_3\subseteq U_1\cup U_2$); $M(c_3)=U_3$ (size $2$). Hence
$T_{\mathrm{tr}}=3+3+2=8$ and $D=C_{ca}-T_{\mathrm{tr}}=10-8=2$.

Since $|A|=4$ and $|U_1\cup U_2|=3$, there is a unique element
$a^*\in A\setminus(U_1\cup U_2)$. Since $U_3\subseteq U_1\cup U_2$,
$a^*\notin U_3$ either, so $s(a^*)=0$. By Lemma~\ref{lem:iso},
there exist distinct $c^{(1)},c^{(2)}\in C$ with
$\{c^{(1)},a^*\},\{c^{(2)},a^*\}\in E$ (both naked).

Since $|U_3|=2$, we have $|A\setminus U_3|=2$; write
$A\setminus U_3=\{a^*,a^{**}\}$. Note $a^{**}\in U_1\cup U_2$
(since $U_1\cup U_2$ has size~$3$ and $a^*\notin U_1\cup U_2$, the
three elements of $U_1\cup U_2$ are precisely $A\setminus\{a^*\}$,
which includes $a^{**}$). Hence $s(a^{**})\geq 1$ (via $b_1$
or $b_2$) but $a^{**}\notin U_3$, so $b_3\not<a^{**}$.

By $t(c_3)=1$ (with $b_3$ the unique middle above $c_3$),
Lemma~\ref{lem:beat} forces some $a\in A\setminus U_3=\{a^*,a^{**}\}$
with $c_3<a$ nakedly.
\emph{Sub-case $a=a^{**}$:} Then $\{c_3,a^{**}\}\in E$. This edge
is distinct from $\{c^{(1)},a^*\}$ and $\{c^{(2)},a^*\}$ (different
second coordinate), so $|E|\geq 3>2=D$: contradiction.
\emph{Sub-case $a=a^*$:} Then $c_3$ is among the elements forced
below $a^*$. We may take $c^{(1)}=c_3$; then $c^{(2)}\in\{c_1,c_2\}$.
Since $L_1=L_2=\{c_1,c_2\}$, we have $c^{(2)}\in L_1\cap L_2$.
Pick any $a_r\in U_1$ (non-empty since $\alpha_1\geq 2$); then
$c^{(2)}<b_1<a_r$ is transitive.
For $\{c_3,a_r\}$: since $c_3\in L_3=C$ and we need $a_r\in U_3$
to get a transitive path $c_3<b_3<a_r$, pick $a_r\in U_1\cap U_3$.
Such $a_r$ exists: $|U_1|=2$, $|U_3|=2$, $|U_1\cup U_3|\leq 3$,
so $|U_1\cap U_3|\geq 1$.
Form
\[
Z=[c_3,a^*]-[c^{(2)},a^*]+[c^{(2)},a_r]-[c_3,a_r].
\]
Then $\partial Z=0$; $\{c_3,a^*\}$ and $\{c^{(2)},a^*\}$ are naked
(by $s(a^*)=0$); $\{c^{(2)},a_r\}$ is transitive ($c^{(2)}\in L_1$,
$a_r\in U_1$); $\{c_3,a_r\}$ is transitive ($c_3\in L_3$, $a_r\in U_3$).
By Lemma~\ref{lem:naked}, $[Z]\neq 0$: contradiction.

In either sub-case, the case $|U_1\cup U_2|=3$ is impossible.

If $|U_1\cup U_2|=4$: $D=0$; Lemma~\ref{lem:beat} on $c_3$ forces a
naked edge, contradicting $D=0$: impossible.

\noindent\textit{Orbit~(4B-2): $L_1=\{c_1,c_2\}$,
$L_2=\{c_1,c_3\}$, $L_3=C$.}
Here $t(c_1)=3$, $t(c_2)=2$ (below $b_1,b_3$), $t(c_3)=2$ (below
$b_2,b_3$). $M(c_2)=U_1\cup U_3$, $M(c_3)=U_2\cup U_3$.

\textit{$|U_1\cup U_2|=2$.} Since $|U_1|=|U_2|=2$ and
$|U_1\cup U_2|=2$, we have $U_1=U_2=\{a_1,a_2\}$, say. The global
containment $U_3\subseteq U_1\cup U_2=\{a_1,a_2\}$ from
Pattern~4B's opening, combined with $|U_3|=2$, forces
$U_3=\{a_1,a_2\}$ as well. Hence
$U_1=U_2=U_3=\{a_1,a_2\}$, so
$M(c_1)=M(c_2)=M(c_3)=\{a_1,a_2\}$ and
\[
T_{\mathrm{tr}}=|M(c_1)|+|M(c_2)|+|M(c_3)|=2+2+2=6,\qquad D=C_{ca}-T_{\mathrm{tr}}=10-6=4.
\]
The elements $a_3,a_4\in A\setminus\{a_1,a_2\}$ satisfy
$a_3,a_4\notin U_j$ for every $j$, so $s(a_3)=s(a_4)=0$. By
Lemma~\ref{lem:iso} applied to $a_3$, at least two distinct elements
of $C$ lie below $a_3$; say $c_i,c_j\in C$ with $c_i\neq c_j$ and
$(c_i,a_3),(c_j,a_3)\in E$ (both naked since $s(a_3)=0$).

Since $L_3=C$, both $c_i$ and $c_j$ lie below $b_3$ in $X$, so the
edges $\{c_i,b_3\}$ and $\{c_j,b_3\}$ are $1$-simplices of
$\mathcal{K}(X)$. Form the $1$-cycle
\[
Z=[c_j,a_3]-[c_i,a_3]+[c_i,b_3]-[c_j,b_3].
\]
A direct computation gives $\partial Z=0$. The edges $\{c_j,a_3\}$
and $\{c_i,a_3\}$ are naked (since $s(a_3)=0$), while $\{c_i,b_3\}$
and $\{c_j,b_3\}$ are edges of $\mathcal{K}(X)$ (direct $C$--$B$
comparabilities, not $C$--$A$ edges, so the naked/transitive
distinction does not apply to them). By Lemma~\ref{lem:naked},
since $Z$ contains the naked edge $\{c_j,a_3\}$ with coefficient
$+1$, we have $[Z]\neq 0$ in $H_1(\mathcal{K}(X);\mathbb{Z})$,
contradicting homotopical triviality.

\textit{$|U_1\cup U_2|=3$}: Since $|U_1|=|U_2|=2$ and
$|U_1\cup U_2|=3$, we have $|U_1\cap U_2|=1$. Write
$U_1\cap U_2=\{x\}$, $U_1=\{x,y\}$, $U_2=\{x,z\}$ with $y\neq z$,
and let $a^*$ be the unique element of $A\setminus(U_1\cup U_2)$,
so $A=\{x,y,z,a^*\}$. From Lemma~\ref{lem:fullL}, $U_3\subseteq U_1\cup U_2
=\{x,y,z\}$, and since $|U_3|=\alpha_3=2$, $U_3$ is one of
$\{x,y\}=U_1$, $\{x,z\}=U_2$, or $\{y,z\}$. We eliminate each
configuration.

Note first that $a^*\notin U_1\cup U_2\cup U_3$, so $s(a^*)=0$, and
Lemma~\ref{lem:iso} forces at least two distinct naked edges
$(c_i,a^*),(c_j,a^*)\in E$ with $c_i\neq c_j$.

\emph{Configuration~$(\mathrm{i})$: $U_3=U_1=\{x,y\}$.}
Compute $|M(c_1)|=|U_1\cup U_2\cup U_3|=|U_1\cup U_2|=3$,
$|M(c_2)|=|U_1\cup U_3|=|U_1|=2$, and
$|M(c_3)|=|U_2\cup U_3|=|\{x,z\}\cup\{x,y\}|=3$, so
$T_{\mathrm{tr}}=3+2+3=8$ and $D=2$. Now $z\in U_2\setminus U_3$ and
$z\notin U_1$ (since $z\neq x,y$), so $s(z)=1$ with $b_2$ the
unique middle below $z$. Since $c_2\notin L_2=\{c_1,c_3\}$,
Lemma~\ref{lem:beat} applied to $z$ forces the naked edge
$(c_2,z)\in E$ (this edge is naked because no transitive path
$c_2<b_j<z$ exists: $j=1$ fails since $z\notin U_1$; $j=2$ fails
since $c_2\notin L_2$; $j=3$ fails since $z\notin U_3=U_1$).
This edge is distinct from $(c_i,a^*),(c_j,a^*)$ (different second
coordinate), so $|E|\geq 3>2=D$: contradiction.
\emph{Configuration~$(\mathrm{ii})$: $U_3=U_2=\{x,z\}$.}
Symmetric to~$(\mathrm{i})$ under
$b_1\leftrightarrow b_2$, $L_1\leftrightarrow L_2$,
$y\leftrightarrow z$. Explicitly: $T_{\mathrm{tr}}=8$, $D=2$; the element
$y\in U_1\setminus U_3$ has $s(y)=1$ with $b_1$ the unique middle
below $y$; Lemma~\ref{lem:beat} forces a naked edge $(c_3,y)$
with $c_3\notin L_1=\{c_1,c_2\}$ (the edge is naked since
$c_3\notin L_1$, $y\notin U_2$, and $y\notin U_3=U_2$). Combined
with the two naked edges to $a^*$: $|E|\geq 3>2=D$, contradiction.
\emph{Configuration~$(\mathrm{iii})$: $U_3=\{y,z\}$.}
Compute $|M(c_1)|=3$, $|M(c_2)|=|U_1\cup U_3|=|\{x,y\}\cup\{y,z\}|=3$,
and $|M(c_3)|=|U_2\cup U_3|=|\{x,z\}\cup\{y,z\}|=3$, so
$T_{\mathrm{tr}}=3+3+3=9$ and $D=1$. The two forced naked edges to $a^*$
already give $|E|\geq 2>1=D$: contradiction.

All three configurations yield a contradiction, so the sub-case
$|U_1\cup U_2|=3$ is impossible.

\textit{$|U_1\cup U_2|=4$}: Since $|U_1|=|U_2|=2$ and their union
has size $4=|A|$, we have $U_1\cap U_2=\emptyset$ and $U_1\cup U_2=A$.
Let $p=|U_1\cap U_3|$ and $q=|U_2\cap U_3|$. Since $U_3\subseteq A$
with $|U_3|=\alpha_3=2$ and $U_1,U_2$ partition $A$, we have
$p+q=|U_3|=2$, so $(p,q)\in\{(2,0),(1,1),(0,2)\}$.

Compute $T_{\mathrm{tr}}$ using $|M(c_1)|=|U_1\cup U_2\cup U_3|=4$,
$|M(c_2)|=|U_1\cup U_3|=2+2-p=4-p$, and
$|M(c_3)|=|U_2\cup U_3|=2+2-q=4-q$. Hence
$T_{\mathrm{tr}}=4+(4-p)+(4-q)=12-(p+q)=12-2=10=C_{ca}$, so $D=0$ in every
configuration.

We eliminate each configuration by exhibiting a beat point.

\emph{Configuration $(p,q)=(2,0)$ ($U_3=U_1$).}
Pick any $a\in U_2$; since $U_1\cap U_2=\emptyset$ and $U_3=U_1$,
we have $a\notin U_1$ and $a\notin U_3$, so $s(a)=1$ with $b_2$ the
unique middle below $a$. Consider $\hat{U}_a$: it contains $b_2$ and
all $c\in C$ with $c<a$. No element of $C\setminus L_2$ lies below $a$
via a transitive path (no middle other than $b_2$ lies below $a$), so
any such $c$ would give a naked edge. Since $D=0$, no naked edges
exist, hence $c<a$ implies $c\in L_2=\{c_1,c_3\}$. Therefore
$\hat{U}_a=\{b_2\}\cup L_2=\{b_2,c_1,c_3\}$. Since $b_2>c_1$ and
$b_2>c_3$, we have $b_2=\max(\hat{U}_a)$: $a$ is a down-beat point,
contradicting minimality.

\emph{Configuration $(p,q)=(0,2)$ ($U_3=U_2$).}
Symmetric to the previous configuration, exchanging $b_1\leftrightarrow b_2$
and $L_1\leftrightarrow L_2$. Pick any $a\in U_1$; then $a\notin U_2$
and $a\notin U_3$, so $s(a)=1$ with $b_1$ the unique middle below $a$.
The same $D=0$ argument gives $\hat{U}_a=\{b_1\}\cup L_1=\{b_1,c_1,c_2\}$
with $b_1=\max(\hat{U}_a)$: down-beat, contradiction.
\emph{Configuration $(p,q)=(1,1)$.}
Write $U_1\cap U_3=\{x\}$, $U_2\cap U_3=\{y\}$ with $x\neq y$ and
$U_3=\{x,y\}$. Let $z$ be the unique element of $U_1\setminus U_3$
(so $U_1=\{x,z\}$) and $w$ the unique element of $U_2\setminus U_3$
(so $U_2=\{y,w\}$). Then $A=\{x,y,z,w\}$, all four elements distinct,
$s(x)=|\{b_1,b_3\}|=2$, $s(y)=|\{b_2,b_3\}|=2$, $s(z)=1$ above $b_1$
only, and $s(w)=1$ above $b_2$ only.

Consider $z$: we have $z\in U_1$ but $z\notin U_2$ and $z\notin U_3$,
so $s(z)=1$ with $b_1$ the unique middle below $z$. We compute
$\hat{U}_z$. First, $b_1\in\hat{U}_z$. No other middle lies in
$\hat{U}_z$: $b_2\not<z$ (since $z\notin U_2$) and $b_3\not<z$
(since $z\notin U_3$). For $c\in C$: a comparability $c<z$ is either
transitive (requiring $c<b_j<z$ for some $j$, which forces $j=1$
and hence $c\in L_1$) or naked. Since $D=0$, no naked edges exist,
so $c<z$ implies $c\in L_1=\{c_1,c_2\}$; in particular $c_3\not<z$.
Hence $\hat{U}_z=\{b_1\}\cup L_1=\{b_1,c_1,c_2\}$. Since $c_1<b_1$
and $c_2<b_1$, we have $b_1=\max(\hat{U}_z)$, so $z$ is a down-beat
point, contradicting minimality. (By the identical argument applied
to $w\in U_2\setminus U_3$, the element $w$ is also a down-beat
point with $b_2=\max(\hat{U}_w)$; either observation alone
suffices for the contradiction.)

All three configurations yield a contradiction, so the sub-case
$|U_1\cup U_2|=4$ is impossible.

Pattern~4B is completely eliminated.

\smallskip\noindent\textbf{Pattern~5A: $\beta=[2,2,2]$, $\alpha=[2,3,3]$,
$C_{ca}=11$.}

Assign $\alpha_2=\alpha_3=3$, $\alpha_1=2$. Degree-preserving
permutations: $\sigma=\mathrm{id}$ and $\sigma=(2\;3)$. Four $L$-orbits:
(5A-1) all equal: $L_1=L_2=L_3=\{c_1,c_2\}$;
(5A-2) $L_1=L_2=\{c_1,c_2\}$, $L_3=\{c_1,c_3\}$;
(5A-2') $L_1=\{c_1,c_2\}$, $L_2=L_3=\{c_1,c_3\}$;
(5A-3) all distinct: $L_1=\{c_1,c_2\}$, $L_2=\{c_1,c_3\}$, $L_3=\{c_2,c_3\}$.

\noindent\textit{Orbit~(5A-1): $L_1=L_2=L_3=\{c_1,c_2\}$.}

Here $t(c_3)=0$ and $M(c_3)=\emptyset$, so every edge $\{c_3,a\}$
is naked. Lemma~\ref{lem:iso} (dual) gives distinct $a_p,a_q\in A$
with $c_3<a_p,c_3<a_q$ naked. Since $|U_3|=3$ and $|A\setminus
U_3|=1$, at most one of $a_p,a_q$ lies outside $U_3$; relabel so
$a_p\in U_3$. Choose $c_k\in\{c_1,c_2\}$ with $c_k<a_q$ (exists:
if $s(a_q)\geq 1$ via $b_j$, then $c_k\in L_j=\{c_1,c_2\}$ gives
$c_k<b_j<a_q$; if $s(a_q)=0$, Lemma~\ref{lem:iso} at $a_q$ gives
$\geq 2$ predecessors, at least one in $\{c_1,c_2\}$). Form
$Z=[c_3,a_p]-[c_k,a_p]+[c_k,a_q]-[c_3,a_q]$:
$\{c_3,a_p\},\{c_3,a_q\}$ naked; $\{c_k,a_p\}$ transitive via $b_3$;
$\{c_k,a_q\}$ is an edge by construction (its naked/transitive
status doesn't affect the conclusion).
Lemma~\ref{lem:naked} gives $[Z]\neq 0$.

\noindent\textit{Orbit~(5A-2): $L_1=L_2=\{c_1,c_2\}$,
$L_3=\{c_1,c_3\}$.}

Since $|L_1\cap L_2|=|\{c_1,c_2\}|=2$, the dual of Lemma~\ref{lem:L33}
applied to $b_1,b_2$ forces $|U_1\cap U_2|\leq 1$. With $|U_1|=2$ and $|U_2|=3$,
inclusion–exclusion gives
$|U_1\cup U_2|=|U_1|+|U_2|-|U_1\cap U_2|\geq 4=|A|$, so
$U_1\cup U_2=A$ and $|U_1\cap U_2|=1$. In particular, the previously
considered sub-cases ``$s(a_4)=0$'' (which would require $a_4\notin
U_1\cup U_2$) and ``$|U_1\cup U_2|=3$'' are vacuous; the only
configurations to consider are $a_4\in U_1\cup U_2$ with $|U_1\cup U_2|=4$.
Compute
\[
M(c_1)=M(c_2)=U_1\cup U_2=A,\qquad M(c_3)=U_3,
\]
giving $T_{\mathrm{tr}}=|M(c_1)|+|M(c_2)|+|M(c_3)|=4+4+3=11=C_{ca}$, hence
$D=0$.

Now $t(c_3)=1$ with $b_3$ the unique middle above $c_3$ (since
$c_3\notin L_1\cup L_2$), and $A\setminus U_3=\{a_4\}$. The dual of
Lemma~\ref{lem:beat} applied to $c_3$ forces the naked edge
$\{c_3,a_4\}\in E$, giving $|E|\geq 1>0=D$: contradiction. The
argument is uniform across all values of $s(a_4)\in\{1,2\}$ (the
only possibilities given $a_4\in U_1\cup U_2$).

\noindent\textit{Orbit~(5A-2'): $L_1=\{c_1,c_2\}$,
$L_2=L_3=\{c_1,c_3\}$.}

$t(c_2)=1$ (below $b_1$ only); $M(c_2)=U_1$. Lemma~\ref{lem:beat}
at $c_2$ forces a naked edge to some $a\in A\setminus U_1$.

(5A-2'-a) $U_1=\{a_1,a_2\}$, $U_2=U_3=\{a_1,a_2,a_3\}$:
$a_4\notin U_1\cup U_2\cup U_3$, so $s(a_4)=0$, all $\{c,a_4\}$ naked.
$T_{\mathrm{tr}}=8$, $D=3$. Lemma~\ref{lem:iso} at $a_4$ gives distinct
$c_i,c_j<a_4$ naked. If $c_2\in\{c_i,c_j\}$, take the other
$c_k\in\{c_1,c_3\}\subseteq L_2$; pick $a_1\in U_1\subseteq U_2$ and form
$Z=[c_2,a_4]-[c_k,a_4]+[c_k,b_2]+[b_2,a_1]-[c_2,a_1]$. Otherwise
$\{c_i,c_j\}=\{c_1,c_3\}\subseteq L_2$; with $a_1\in U_2$, form
$Z=[c_1,a_4]-[c_3,a_4]+[c_3,b_2]+[b_2,a_1]-[c_1,a_1]$. In each
case two $a_4$-edges are naked, others transitive: $[Z]\neq 0$.

(5A-2'-b) $U_1=\{a_1,a_2\}$, $U_2=\{a_1,a_2,a_3\}$, $U_3=\{a_1,a_2,a_4\}$:
$T_{\mathrm{tr}}=10$, $D=1$. $s(a_3)=1$ above $b_2$ with $c_2\notin L_2$:
Lemma~\ref{lem:beat} forces $c_2<a_3$ naked; $s(a_4)=1$ above $b_3$
with $c_2\notin L_3$: Lemma~\ref{lem:beat} forces $c_2<a_4$ naked.
Two distinct naked edges (different second coordinates) exceed
$D=1$: impossible.

(5A-2'-c) $U_1=\{a_1,a_2\}$, $U_2=\{a_1,a_2,a_3\}$, $U_3=\{a_1,a_3,a_4\}$:
$T_{\mathrm{tr}}=10$, $D=1$. $s(a_4)=1$ above $b_3$ with $c_2\notin L_3$:
Lemma~\ref{lem:beat} forces $c_2<a_4$ naked. Pick $a_1\in U_2\cap U_3$. Form
$Z=[c_2,a_4]-[c_1,a_4]+[c_1,b_2]+[b_2,a_1]-[c_2,a_1]$:
$\{c_1,a_4\}$ transitive via $b_3$ ($c_1\in L_3$, $a_4\in U_3$);
others transitive. $[Z]\neq 0$.

(5A-2'-d) $U_1=\{a_1,a_2\}$, $U_2=U_3=\{a_1,a_3,a_4\}$:
$T_{\mathrm{tr}}=9$, $D=2$. $s(a_2)=1$ above $b_1$ with $c_3\notin L_1$:
Lemma~\ref{lem:beat} forces $c_3<a_2$ naked. Pick $a_1\in U_2$. Form
$Z=[c_3,a_2]-[c_1,a_2]+[c_1,b_2]+[b_2,a_1]-[c_3,a_1]$:
$\{c_3,a_2\}$ naked; others transitive. $[Z]\neq 0$.

(5A-2'-e) $U_1=\{a_1,a_2\}$, $U_2=\{a_1,a_3,a_4\}$, $U_3=\{a_2,a_3,a_4\}$:
$T_{\mathrm{tr}}=10$, $D=1$. Lemma~\ref{lem:beat} at $c_2$ forces
$c_2<a_3$ or $c_2<a_4$ naked; WLOG $c_2<a_3$ (the case $c_2<a_4$
is isomorphic via $a_3\leftrightarrow a_4$). Form
$Z=[c_2,a_3]-[c_1,a_3]+[c_1,b_1]+[b_1,a_1]-[c_2,a_1]$:
$\{c_2,a_3\}$ naked; others transitive. $[Z]\neq 0$.

\noindent\textit{Orbit~(5A-3): $L_1=\{c_1,c_2\}$,
$L_2=\{c_1,c_3\}$, $L_3=\{c_2,c_3\}$.}

$t(c_i)=2$ for all $i$, $M(c_1)=U_1\cup U_2$, $M(c_2)=U_1\cup U_3$,
$M(c_3)=U_2\cup U_3$. Since $\alpha_2=\alpha_3=3$, let
$p=A\setminus U_2$ and $q=A\setminus U_3$.

\emph{Case $p\neq q$:} Then $p\in U_3$ and $q\in U_2$. Let
$k:=|\{p,q\}\cap U_1|\in\{0,1,2\}$. Since $M(c_3)=A$ gives
$|M(c_3)|=4$, and $M(c_1)$ has size $3$ iff $p\notin U_1$ else
$4$ (similarly for $M(c_2)$ with $q$), we obtain
$T_{\mathrm{tr}}=10+k$ and $D=1-k$.

\textit{$k=2$:} $|U_1|\geq 2$ forces $U_1=\{p,q\}$; but $D=-1<0$
contradicts Convention~\ref{conv:distinct}.

\textit{$k=0$:} $p\notin U_1\cup U_2$ with $p\in U_3$: $s(p)=1$
above $b_3$ only, and $C\setminus L_3=\{c_1\}$: Lemma~\ref{lem:beat}
forces $\{c_1,p\}\in E$ naked. Dually (by Lemma~\ref{lem:beat} applied
to $q$), $\{c_2,q\}\in E$ naked. Two distinct naked edges (different
second coordinates) exceed $D=1$: impossible.

\textit{$k=1$:} Using the residual $\sigma=(2\;3)$ symmetry of
Pattern~5A (which exchanges $b_2\leftrightarrow b_3$ and hence
$p\leftrightarrow q$), assume WLOG $p\in U_1$, $q\notin U_1$;
$D=0$. $s(q)=1$ above $b_2$ with
$C\setminus L_2=\{c_2\}$: Lemma~\ref{lem:beat} forces
$\{c_2,q\}\in E$ naked: $|E|\geq 1>0$, impossible.

\emph{Case $p=q=a_*$ (so $U_2=U_3=A\setminus\{a_*\}$):}

If $a_*\notin U_1$: $T_{\mathrm{tr}}=9$, $D=2$, every $c\in C$
reaches $A\setminus\{a_*\}$ transitively. $s(a_*)=0$ gives
(Lemma~\ref{lem:iso}) distinct $c_k,c_l<a_*$ naked. Pick $a'\in
U_1\subseteq A\setminus\{a_*\}$; $c_k<a',c_l<a'$ transitive.
Lemma~\ref{lem:template} at $(c_k,c_l,a_*,a')$ gives $[Z]\neq 0$.

If $a_*\in U_1$: $U_1=\{a_*,x\}$ for some $x\in U_2\cap U_3$;
$T_{\mathrm{tr}}=11$, $D=0$. $s(a_*)=1$ above $b_1$ only with
$C\setminus L_1=\{c_3\}$: Lemma~\ref{lem:beat} forces
$\{c_3,a_*\}\in E$ naked: $|E|\geq 1>0$, impossible.

Pattern~5A is completely eliminated.

\smallskip\noindent\textbf{Pattern~5B: $\beta=[2,2,2]$, $\alpha=[2,2,4]$,
$C_{ca}=11$.}
Pairs $(\beta_j,\alpha_j)\in\{(2,2),(2,2),(2,4)\}$.
Label $\alpha_3=4$, so $U_3=A$. Since $\beta_3=2$, write
$C\setminus L_3=\{c'\}$. We first show no $a\in A$ has $s(a)=1$
with $b_3$ the unique middle below.

Suppose such $a$ exists. If $c'\not<a$ then
$\hat{U}_a=\{b_3\}\cup L_3$ with $b_3=\max$: down-beat,
contradicting minimality. If $c'<a$ then $\{c',a\}$ is naked
($c'\notin L_3$). Fix $c''\in L_3$ and choose $a_r\in U_3=A$;
$\{c'',a\},\{c'',a_r\}$ are transitive via $b_3$. The edge
$\{c',a_r\}$ is transitive if $t(c')\geq 1$ (pick $a_r\in M(c')$)
and otherwise naked (lies in no triangle). Form
$Z=[c',a]-[c'',a]+[c'',a_r]-[c',a_r]$: Lemma~\ref{lem:naked} gives
$[Z]\neq 0$, contradiction.

Hence every $a\in A$ lies in $U_1\cup U_2$, so $U_1\cup U_2=A$;
$|U_1|=|U_2|=2$ forces $U_1\cap U_2=\emptyset$.

\emph{Orbit~(5B-1)} ($L_1=L_2=L_3=\{c_1,c_2\}$, $t(c_3)=0$):
$T_{\mathrm{tr}}=8$, $D=3$. Lemma~\ref{lem:iso} at $c_3$ gives
distinct $a',a''\in A$ with $c_3<a',c_3<a''$ naked.
$\{c_1,a'\},\{c_1,a''\}$ transitive via $b_3$. Form
$Z=[c_3,a']-[c_1,a']+[c_1,a'']-[c_3,a'']$: $[Z]\neq 0$.

\emph{Orbit~(5B-2)} ($L_3=\{c_1,c_2\}$, $c_3\notin L_3$):
From the Pattern~5B opening, $U_1\cup U_2=A$ and (since
$|U_1|=|U_2|=2$) $U_1\cap U_2=\emptyset$. Since $c_1,c_2\in L_3$
and $U_3=A$, $|M(c_1)|=|M(c_2)|=4$. Split on $c_3$:
\begin{enumerate}[label=\textup{(\arabic*)}]
\item $c_3\in L_1\cap L_2$: $M(c_3)=U_1\cup U_2=A$, so
  $T_{\mathrm{tr}}=12>11=C_{ca}$: impossible.
\item $c_3\in L_1\setminus L_2$: $M(c_3)=U_1$,
  $T_{\mathrm{tr}}=10$, $D=1$. The unique naked edge from $c_3$
  goes to $a^*\in U_2=A\setminus U_1$. Pick $a_r\in U_1$. Form
  $Z=[c_3,a^*]-[c_1,a^*]+[c_1,a_r]-[c_3,a_r]$:
  $\{c_3,a^*\}$ naked; the other three edges transitive
  ($\{c_1,a^*\}$ via $b_3$, since $c_1\in L_3$ and $a^*\in U_3=A$;
  $\{c_3,a_r\}$ via $b_1$, since $c_3\in L_1$ and $a_r\in U_1$;
  $\{c_1,a_r\}$ via $b_3$, since $c_1\in L_3$ and $a_r\in U_3=A$
  --- note that $c_1$ need not lie in $L_1$, e.g.\ $L_1=\{c_2,c_3\}$
  is possible).
  Lemma~\ref{lem:naked} gives $[Z]\neq 0$.
\item $c_3\in L_2\setminus L_1$: symmetric to~(2).
\item $c_3\notin L_1\cup L_2$: $t(c_3)=0$, $M(c_3)=\emptyset$,
  $T_{\mathrm{tr}}=8$, $D=3$; all edges $\{c_3,a\}$ naked.
  Lemma~\ref{lem:iso} at $c_3$ gives distinct $a',a''$ with
  $c_3<a',c_3<a''$ naked; $\{c_1,a'\},\{c_1,a''\}$ transitive via
  $b_3$. Form $Z=[c_3,a']-[c_1,a']+[c_1,a'']-[c_3,a'']$:
  $[Z]\neq 0$.
\end{enumerate}

\smallskip\noindent\textbf{Pattern~5C: $\beta=[2,2,3]$, $\alpha=[2,3,2]$,
$C_{ca}=11$.}
Pairs $(\beta_j,\alpha_j)\in\{(2,2),(2,3),(3,2)\}$.
Label so that $\beta_3=3$ (giving $L_3=C$) and $\alpha_2=3$.
Since $L_3=C$: any $a$ with $s(a)=1$ above only $b_3$ has
$\hat{U}_a=C\cup\{b_3\}$ with $b_3=\max$: down-beat.
Hence $U_3\subseteq U_1\cup U_2$. Since $|U_2|=3$, there is
exactly one element $a^*\in A\setminus U_2$. We consider $s(a^*)$:

\emph{$s(a^*)=0$}: Lemma~\ref{lem:iso} gives distinct $c',c''\in C$
below $a^*$ nakedly. Since $|L_2|=2$, at least one, say $c'$,
lies in $L_2$. Pick $a_r\in U_2\cap U_3$. (Non-empty: $|A|=4$,
$|U_2|=3$, so $|A\setminus U_2|=1$; since $|U_3|=\alpha_3\geq 2>1$,
we have $U_3\not\subseteq A\setminus U_2$, hence $U_2\cap U_3\neq\emptyset$.)
Then $c'<b_2<a_r$ transitive ($c'\in L_2$, $a_r\in U_2$);
$c''<b_3<a_r$ transitive (since $L_3=C\ni c''$ and $a_r\in U_3$). Form
$Z=[c'',a^*]-[c',a^*]+[c',a_r]-[c'',a_r]$:
$\{c'',a^*\}$ and $\{c',a^*\}$ naked; others transitive.
$[Z]\neq 0$. Contradiction.

\emph{$s(a^*)=1$ above $b_1$ only}:
If no $c'\in C\setminus L_1$ has naked edge to $a^*$,
$\hat{U}_{a^*}=\{b_1\}\cup L_1$ with $b_1=\max$: down-beat.
Otherwise pick such a $c'$ (so $c'\in L_3=C$ trivially, and
$\{c',a^*\}$ is naked). Pick $c_0\in L_1\cap L_2$ (non-empty
since $|L_1|+|L_2|=4>3=|C|$ forces $L_1\cap L_2\neq\emptyset$),
and pick any $a_r\in U_3$ (so $a_r\neq a^*$ since $a^*\notin U_3$,
which means $a_r\in A\setminus\{a^*\}=U_2$ as well; here $|U_3|=2$,
so two choices for $a_r$ are available). Form
$Z=[c',a^*]-[c_0,a^*]+[c_0,a_r]-[c',a_r]$: $\{c',a^*\}$ naked;
$\{c_0,a^*\}$ transitive via $b_1$ ($c_0\in L_1$, $a^*\in U_1$);
$\{c_0,a_r\}$ transitive via $b_2$ ($c_0\in L_2$, $a_r\in U_2$);
$\{c',a_r\}$ transitive via $b_3$ ($c'\in L_3$, $a_r\in U_3$).
Lemma~\ref{lem:naked} gives $[Z]\neq 0$.

\emph{$s(a^*)=1$ above $b_3$ only}:
$\hat{U}_{a^*}=C\cup\{b_3\}$ with $b_3=\max$: down-beat.

\emph{$s(a^*)=2$ (above $b_1$ and $b_3$)}: $a^*\in U_1\cap U_3$;
every $c\in C$ reaches $a^*$ transitively via $b_3$.

If $C\setminus(L_1\cup L_2)\neq\emptyset$: some $c'$ lies in $L_3$
only, so $|M(c')|\leq 2$, $T_{\mathrm{tr}}\leq 10<11$, $D\geq 1$.
The naked edge from $c'$ goes to $a'\in A\setminus\{a^*\}=U_2$.
Pick $c''\in L_2$, $a_r\in U_3\setminus\{a^*\}\subseteq U_2$. Form
$Z=[c',a']-[c'',a']+[c'',a_r]-[c',a_r]$: $\{c',a'\}$ naked, rest
transitive via $b_2$ and $b_3$. $[Z]\neq 0$.

If $L_1\cup L_2=C$ (so $|L_1\cap L_2|=1$): let $c^{**}$ be the
unique element of $L_1\setminus L_2$; $M(c^{**})=U_1\cup U_3$.

\textit{$|U_1\cap U_3|=1$:} the unique $a^{**}\in A\setminus
(U_1\cup U_3)$ satisfies $a^{**}\notin U_1$ (so $a^{**}\neq a^*$
since $a^*\in U_1\cap U_3$ from the $s(a^*)=2$ hypothesis); since
$a^*$ is the unique element of $A\setminus U_2$, this forces
$a^{**}\in U_2$. Combined with $a^{**}\notin U_1\cup U_3$, we obtain
$s(a^{**})=1$ above $b_2$ only. If no $c\in C\setminus L_2$ has
naked edge to $a^{**}$, then $\hat{U}_{a^{**}}\cap C\subseteq L_2$
(every comparable $(c,a^{**})$-pair is transitive, hence routes
through $b_2$, requiring $c\in L_2$); combined with $b_2$ as the
unique middle below $a^{**}$, $\hat{U}_{a^{**}}=\{b_2\}\cup L_2$,
and since every $c\in L_2$ satisfies $c<b_2$, we have
$b_2=\max(\hat{U}_{a^{**}})$: $a^{**}$ is a down-beat point,
contradiction.
Otherwise some $c\in C\setminus L_2$ has $\{c,a^{**}\}$ naked;
since $L_2=C\setminus\{c^{**}\}$ (using $|L_2|=2$ and
$c^{**}\in C\setminus L_2$, the unique element outside $L_2$),
necessarily $c=c^{**}$. Pick $c''\in L_2$ and $a_r\in U_1\cap U_3$
(non-empty by the $|U_1\cap U_3|=1$ hypothesis); form
$Z=[c^{**},a^{**}]-[c'',a^{**}]+[c'',a_r]-[c^{**},a_r]$:
$\{c^{**},a^{**}\}$ naked; $\{c'',a^{**}\}$ transitive via $b_2$
($c''\in L_2$, $a^{**}\in U_2$);
$\{c'',a_r\}$ transitive via $b_3$ ($c''\in L_3=C$, $a_r\in U_3$);
$\{c^{**},a_r\}$ transitive via $b_1$ ($c^{**}\in L_1$, $a_r\in U_1$).
$[Z]\neq 0$.

\textit{$|U_1\cap U_3|=2$ ($U_1=U_3$):} $T_{\mathrm{tr}}=10$, $D=1$.
Each $a\in U_2\setminus U_1$ has $a\in U_2$ but $a\notin U_1=U_3$,
so $s(a)=1$ above $b_2$ only. The unique transitive route to $a$
requires $c\in L_2$, so any $c\in C\setminus L_2=\{c^{**}\}$ with
$c<a$ produces a naked edge. If no naked edge reaches $a$, then
$\hat{U}_a=\{b_2\}\cup L_2$ with $b_2=\max$: $a$ is a down-beat
point, contradiction. Hence for each $a\in U_2\setminus U_1$,
$\{c^{**},a\}$ is naked. Since $|U_2\setminus U_1|=|U_2|-|U_1\cap
U_2|=3-1=2$, this forces two distinct naked edges (same first
coordinate $c^{**}$, distinct second coordinates), contradicting
$D=1$.

\smallskip\noindent\textbf{Pattern~5D: $\beta=[2,3,3]$, $\alpha=[2,2,2]$,
$C_{ca}=11$.}
Pairs $(\beta_j,\alpha_j)\in\{(2,2),(3,2),(3,2)\}$.
Label $\beta_2=\beta_3=3$, so $L_2=L_3=C$. Lemma~\ref{lem:fullL}
forces $U_2\subseteq U_1\cup U_3$ and $U_3\subseteq U_1\cup U_2$.

For $a\in U_1\setminus(U_2\cup U_3)$: if no $c'\in C\setminus L_1$
has naked edge to $a$, $\hat{U}_a=\{b_1\}\cup L_1$ with $b_1=\max$
(down-beat). Otherwise $c'\in L_2=C$; picking $c_0\in L_1$,
$a_r\in U_2$, form $Z=[c',a]-[c_0,a]+[c_0,a_r]-[c',a_r]$:
$\{c',a\}$ naked, others transitive, $[Z]\neq 0$. Hence
$U_1\subseteq U_2\cup U_3$.

Since $|U_2\setminus U_3|=|U_3\setminus U_2|=2-|U_2\cap U_3|$ with
both subsets of $U_1$ (disjoint), $2(2-|U_2\cap U_3|)\leq|U_1|=2$
gives $|U_2\cap U_3|\geq 1$. If $|U_2\cap U_3|=2$, $U_2=U_3$ and
both elements of $A\setminus U_2$ have $s=0$; if $|U_2\cap U_3|=1$,
the unique $a^*\in A\setminus(U_2\cup U_3)$ satisfies $a^*\notin U_1$
($U_1\subseteq U_2\cup U_3$), so $s(a^*)=0$. Either way, pick such
$a^*$; Lemma~\ref{lem:iso} gives distinct $c_i,c_j<a^*$ naked. With
$a_r\in U_2$ and $L_2=C$, form
$Z=[c_i,a^*]-[c_j,a^*]+[c_j,a_r]-[c_i,a_r]$: $[Z]\neq 0$.

\smallskip\noindent\textbf{Pattern~6A: $\beta=[2,2,3]$, $\alpha=[2,2,3]$,
$C_{ca}=12$.}

$L_3=C$; Lemma~\ref{lem:fullL} gives $U_3\subseteq U_1\cup U_2$.
Since $|U_3|=3$ forces $|U_1\cup U_2|\geq 3$, while $|U_1|=|U_2|=2$
forces $|U_1\cup U_2|\leq 4$, we have $|U_1\cup U_2|\in\{3,4\}$.
Two $L$-orbits: (6A-1) $L_1=L_2=\{c_1,c_2\}$, $L_3=C$;
(6A-2) $L_1=\{c_1,c_2\}$, $L_2=\{c_1,c_3\}$, $L_3=C$.

\emph{Sub-case $|U_1\cup U_2|=3$ ($a^*$ exists, $s(a^*)=0$):}
Write $A\setminus(U_1\cup U_2)=\{a^*\}$.

In orbit (6A-1): Lemma~\ref{lem:iso} gives $c_i,c_j<a^*$ naked.
Since $U_3\subseteq U_1\cup U_2$ with $|U_3|=|U_1\cup U_2|=3$, we have
$U_3=U_1\cup U_2$, so $U_1\subseteq U_3$.
Pick $a'\in U_1\subseteq U_3$. WLOG $c_i\in L_1$ (since
$|C\setminus L_1|=1$, at least one of $c_i,c_j$ lies in $L_1$);
then $c_i<b_1<a'$ transitive.
For $c_j$: if $c_j\in L_1$ then $c_j<b_1<a'$ transitive; else
$c_j\in L_3=C$ gives $c_j<b_3<a'$ transitive (using $a'\in U_3$).
Form $Z=[c_i,a^*]-[c_j,a^*]+[c_j,a']-[c_i,a']$:
$\{c_i,a^*\}$ and $\{c_j,a^*\}$ naked; others transitive.
$[Z]\neq 0$.

In orbit (6A-2): pick distinct $c_i,c_j\in C$ below $a^*$ naked
(Lemma~\ref{lem:iso}); since $L_3=C$, both lie in $L_3$. Pick
$a'\in U_3$; then $c_i,c_j<b_3<a'$ transitive. Form
$Z=[c_i,a^*]-[c_j,a^*]+[c_j,a']-[c_i,a']$: $[Z]\neq 0$.

\emph{Sub-case $|U_1\cup U_2|=4$ ($U_1\cup U_2=A$, $|U_1\cap U_2|=0$):}
Write $A\setminus U_3=\{a^*\}$; $a^*\in U_k$ for exactly one $k\in\{1,2\}$.
In each of the three configurations below we construct a
5-edge cycle of the form
$Z=[c',a^*]-[c,a^*]+[c,b_k]+[b_k,a']-[c',a']$
with $a'\in U_k\cap U_3$ and $c\in L_k$; the edge $\{c',a^*\}$ is
naked, and the remaining four edges are transitive (three via~$b_k$,
and $\{c',a'\}$ via~$b_3$ since $c'\in L_3=C$ and $a'\in U_3$).
For Orbit~(6A-1), set $c':=c_3$ (so $c_3\notin L_k$, forcing
$\{c_3,a^*\}$ naked) and $c:=c_j\in L_k$ arbitrary; pick any
$a'\in U_k\cap U_3$. For Orbit~(6A-2) with $k=1$, set $c':=c_3$
(so $c_3\notin L_1$) and $c:=c_1\in L_1$; pick $a'\in U_1\cap U_3$
(so $a'\in U_1$). For Orbit~(6A-2) with $k=2$, set $c':=c_2$ (so
$c_2\notin L_2$) and $c:=c_1\in L_2$; pick $a'\in U_2\cap U_3$
(so $a'\in U_2$). In each case $U_k\cap U_3\neq\emptyset$
(since $|U_k|=2$, $|U_3|=3$, $|U_k\cup U_3|\leq|A|=4$ gives
$|U_k\cap U_3|\geq 1$). In every configuration
Lemma~\ref{lem:naked} gives $[Z]\neq 0$.

Pattern~6A is completely eliminated.

\smallskip\noindent\textbf{Pattern~6B: $\beta=[2,2,2]$, $\alpha=[2,3,4]$,
$C_{ca}=12$.}

Assign $\alpha_3=4$, so $U_3=A$. Since $\alpha_1,\alpha_2,\alpha_3$
are pairwise distinct, the residual $b$-symmetry is trivial, giving
five orbits: (6B-1) all $L_j$ equal; three (6B-2)~configurations
distinguished by which $L_j$ is the singleton; and (6B-3) all $L_j$
distinct.

\emph{Orbit~(6B-1)}: $L_j=\{c_1,c_2\}$ for all $j$. $t(c_3)=0$,
$M(c_3)=\emptyset$. Lemma~\ref{lem:iso} gives $c_3<a_p,a_q$ (naked).
Pick $c_k\in\{c_1,c_2\}$; $c_k<b_3<a_p$ transitive ($U_3=A$).
Form $Z=[c_3,a_p]-[c_k,a_p]+[c_k,a_q]-[c_3,a_q]$:
$\{c_3,a_p\}$ and $\{c_3,a_q\}$ naked; $\{c_k,a_p\}$ and $\{c_k,a_q\}$
transitive. $[Z]\neq 0$.

\emph{Orbit~(6B-2)}: exactly two of $L_1, L_2, L_3$ coincide and
the third differs. With $\alpha_1, \alpha_2, \alpha_3$ pairwise
distinct, the three sub-configurations (by which $L_j$ is the
singleton) must be treated separately; each is exhaustive within
its label modulo $\phi:C\to C$.
\begin{enumerate}[label=\textup{(\arabic*)}]
  \item $L_1=L_2=\{c_1,c_2\}$, $L_3=\{c_1,c_3\}$:
    $c_3\in L_3$ only, so $t(c_3)=1$. $\hat{F}_{c_3}=\{b_3\}\cup U_3
    =\{b_3\}\cup A$ with $b_3=\min$ (since $b_3<a$ for every $a\in
    A=U_3$): $c_3$ is up-beat. Contradiction.
  \item $L_1=L_3=\{c_1,c_2\}$, $L_2=\{c_1,c_3\}$:
    $c_3\in L_2$ only, $t(c_3)=1$. By Lemma~\ref{lem:beat} applied
    to $c_3$, either no $a\in A\setminus U_2$ has $c_3<a$, in which
    case $\hat{F}_{c_3}=\{b_2\}\cup U_2$ with $b_2=\min$ (up-beat
    at $c_3$, contradiction), or some $a^*\in A\setminus U_2$
    satisfies $c_3<a^*$ nakedly. In the latter case, pick
    $a_r\in U_2$ and form
    $Z=[c_3,a^*]-[c_1,a^*]+[c_1,a_r]-[c_3,a_r]$:
    $\{c_3,a^*\}$ is naked; $\{c_1,a^*\}$ is transitive via $b_3$
    ($c_1\in L_3$, $U_3=A$); $\{c_1,a_r\}$ and $\{c_3,a_r\}$ are
    transitive via $b_2$ ($c_1,c_3\in L_2$, $a_r\in U_2$). By
    Lemma~\ref{lem:naked}, $[Z]\neq 0$: contradiction.
  \item $L_2=L_3=\{c_1,c_3\}$, $L_1=\{c_1,c_2\}$:
    $c_2\in L_1$ only, $t(c_2)=1$. By Lemma~\ref{lem:beat} applied
    to $c_2$, either no $a\in A\setminus U_1$ has $c_2<a$, in which
    case $\hat{F}_{c_2}=\{b_1\}\cup U_1$ with $b_1=\min$ (up-beat
    at $c_2$, contradiction), or some $a^*\in A\setminus U_1$
    satisfies $c_2<a^*$ nakedly. In the latter case, pick
    $a_r\in U_1$ and form
    $Z=[c_2,a^*]-[c_1,a^*]+[c_1,a_r]-[c_2,a_r]$:
    $\{c_2,a^*\}$ is naked; $\{c_1,a^*\}$ is transitive via $b_3$
    ($c_1\in L_3$, $U_3=A$); $\{c_1,a_r\}$ and $\{c_2,a_r\}$ are
    transitive via $b_1$ ($c_1,c_2\in L_1$, $a_r\in U_1$). By
    Lemma~\ref{lem:naked}, $[Z]\neq 0$: contradiction.
\end{enumerate}

\emph{Orbit~(6B-3)}: $L_1=\{c_1,c_2\}$, $L_2=\{c_1,c_3\}$,
$L_3=\{c_2,c_3\}$ (all distinct). Then $t(c_i)=2$ for each
$c_i\in C$, and $M(c_1)=U_1\cup U_2$, $M(c_2)=U_1\cup U_3=U_1\cup A=A$,
$M(c_3)=U_2\cup U_3=U_2\cup A=A$ (using $U_3=A$ from
$\alpha_3=4$). In particular $|M(c_2)|=|M(c_3)|=4$ and
$|M(c_1)|=|U_1\cup U_2|$. Since $|U_1\cap U_2|\geq|U_1|+|U_2|-|A|=1$,
two canonical sub-orbits arise modulo $\psi:A\to A$:
$|U_1\cap U_2|=2$ (i.e., $U_1\subseteq U_2$) and $|U_1\cap U_2|=1$.
(6B-3-a) $U_1=\{a_1,a_2\}$, $U_2=\{a_1,a_2,a_3\}$.
$T_{\mathrm{tr}}=11$, $D=1$. $s(a_4)=1$ above $b_3$ with $c_1\notin L_3$:
Lemma~\ref{lem:beat} forces $c_1<a_4$ naked.
Form $Z=[c_1,a_4]-[b_3,a_4]-[c_2,b_3]+[c_2,b_1]+[b_1,a_1]-[c_1,a_1]$:
the $C$--$A$ edges are $\{c_1,a_4\}$ (naked) and $\{c_1,a_1\}$
(transitive via $b_1$: $c_1\in L_1$, $a_1\in U_1$); the
four remaining edges $\{b_3,a_4\}$, $\{c_2,b_3\}$, $\{c_2,b_1\}$,
$\{b_1,a_1\}$ are direct $C$--$B$ or $B$--$A$ comparability
$1$-simplices of $\mathcal{K}(X)$ (witnessed by $a_4\in U_3$,
$c_2\in L_3$, $c_2\in L_1$, $a_1\in U_1$, respectively). By
Lemma~\ref{lem:naked}, $[Z]\neq 0$.
(6B-3-b) $U_1=\{a_1,a_4\}$, $U_2=\{a_1,a_2,a_3\}$. $T_{\mathrm{tr}}=12$, $D=0$.
$s(a)=2,3$ for all $a$; $t(c)=2$ for all $c$: no beat points. \textbf{Type~I}.

Pattern~6B yields exactly one surviving orbit: Type~I.

\smallskip\noindent\textbf{Pattern~6C: $\beta=[2,2,3]$, $\alpha=[2,4,2]$,
$C_{ca}=12$.}
Pairs $(\beta_j,\alpha_j)\in\{(2,2),(2,4),(3,2)\}$.
Label so that $\alpha_2=4$ (giving $U_2=A$) and $\beta_3=3$
(giving $L_3=C$).

Since $\beta_2=2$ and $U_2=A$: for any $a$ with $s(a)=1$ above
only $b_2$, the set $\hat{U}_a=\{b_2\}\cup L_2$ with
$b_2=\max(\hat{U}_a)$ (since every $c\in L_2$ satisfies $c<b_2$
and $|L_2|=2<m=3$, so no element of $C\setminus L_2$ rescues $a$
transitively through $b_2$): down-beat unless some $c'\in C\setminus L_2$
has naked edge to $a$. If such $c'$ exists: pick $c_0\in L_2$
($c_0<b_2<a$ transitive) and $a_r\in U_3$ ($c_0<b_3<a_r$ transitive
since $c_0\in L_3=C$; $c'<b_3<a_r$ transitive since $c'\in L_3=C$).
Form $Z=[c',a]-[c_0,a]+[c_0,a_r]-[c',a_r]$:
$\{c',a\}$ naked; others transitive; $[Z]\neq 0$.
Either way, every $a$ with $s(a)=1$ above only $b_2$ leads to contradiction.
Hence every $a\in A$ also lies above $b_1$ or $b_3$: $U_1\cup U_3=A$.
By inclusion-exclusion: $|U_1\cap U_3|=|U_1|+|U_3|-4$.

\emph{Case $C\setminus(L_1\cup L_2)\neq\emptyset$}: Let $c^*$ be such
an element ($c^*\in L_3=C$ only). We first compute the budget. Since
$|L_1|=|L_2|=2$ and $L_1\cup L_2\subseteq C\setminus\{c^*\}$ (a
$2$-element set), each of $L_1, L_2$ equals $C\setminus\{c^*\}$, so
$L_1=L_2=C\setminus\{c^*\}$. Hence $|M(c^*)|=|U_3|=2$ and for each
$c\in C\setminus\{c^*\}$, $|M(c)|=|U_1\cup U_2\cup U_3|=|A|=4$
(using $U_2=A$). This gives $T_{\mathrm{tr}}=2+4+4=10$ and
$D=C_{ca}-T_{\mathrm{tr}}=12-10=2$. Since $C_{ca}=mn=12$, every
$(c,a)\in C\times A$ is comparable, so in particular $c^*<a$ for
every $a\in A$. For $a\in U_1=A\setminus U_3$ (using
$|U_1\cap U_3|=0$ from the case hypothesis $U_1\cup U_3=A$): no
transitive route $c^*<b_j<a$ exists ($j=1,2$ requires
$c^*\in L_j$, false; $j=3$ requires $a\in U_3$, false), so
$\{c^*,a\}$ is naked. Hence $c^*$ is connected to both elements
$a_1, a_2$ of $U_1$ via naked edges, accounting for the entire
budget $D=2$. Pick any $c'\in L_1$ (so $c'\in L_1$, hence
$c'<b_1<a_i$ transitive for both $a_i\in U_1$). Form
$Z=[c^*,a_1]-[c',a_1]+[c',a_2]-[c^*,a_2]$:
$\{c^*,a_1\}$ and $\{c^*,a_2\}$ naked; $\{c',a_1\}$ and $\{c',a_2\}$
transitive via $b_1$. By Lemma~\ref{lem:naked}, $[Z]\neq 0$:
contradiction.

\emph{Case $L_1\cup L_2=C$}: Since $|L_1|=|L_2|=2$, $|L_1\cup L_2|=3$,
and $|C|=3$, we have $|L_1\cap L_2|=1$. Relabelling $C$ if
necessary, assume $L_1=\{c_1,c_2\}$ and $L_2=\{c_2,c_3\}$, so
that $L_1\cap L_2=\{c_2\}$ and $L_1\cup L_2=C$; combined with
$L_3=C$ this yields $\{c_2,c_3\}\subseteq L_2\cap L_3$.

From the case hypothesis $U_1\cup U_3=A$ together with
$|U_1|=|U_3|=2$ and $|A|=4$ we also obtain $|U_1\cap U_3|=0$, so
$U_1$ and $U_3$ partition $A$. Relabelling $A$ if necessary, assume
$U_1=\{a_1,a_2\}$ and $U_3=\{a_3,a_4\}$; combined with $U_2=A$ this
yields $\{a_3,a_4\}\subseteq U_2\cap U_3$.

For every triple $(i,j,k)\in\{2,3\}\times\{2,3\}\times\{3,4\}$ the
sequence $c_i<b_j<a_k$ is a chain in $X$: since
$c_i\in\{c_2,c_3\}\subseteq L_2\cap L_3$ we have $c_i\in L_j$ for
$j\in\{2,3\}$, and since
$a_k\in\{a_3,a_4\}\subseteq U_2\cap U_3$ we have $a_k\in U_j$ for
$j\in\{2,3\}$. Hence all eight $2$-simplices
$\tau_{ijk}:=(c_i,b_j,a_k)$ belong to $\mathcal{K}(X)$. Define the
integral $2$-chain
\[
Y\;:=\;\sum_{(i,j,k)\in\{0,1\}^3}
(-1)^{i+j+k}\,\tau_{(2+i)(2+j)(3+k)}\ \in\ C_2(\mathcal{K}(X);\mathbb{Z}).
\]
Explicitly,
\begin{align*}
Y\ =\ &+(c_2,b_2,a_3)-(c_2,b_2,a_4)-(c_2,b_3,a_3)+(c_2,b_3,a_4)\\
&-(c_3,b_2,a_3)+(c_3,b_2,a_4)+(c_3,b_3,a_3)-(c_3,b_3,a_4).
\end{align*}

We claim $\partial Y=0$. Using the boundary formula
$\partial[c,b,a]=[b,a]-[c,a]+[c,b]$, we track each $1$-simplex
appearing in $\partial\tau_{ijk}$ for some summand. Each $[b_j,a_k]$
with $j\in\{2,3\}$, $k\in\{3,4\}$ appears in exactly the two
summands $\tau_{2jk}$ and $\tau_{3jk}$, whose signs in $Y$ are
$(-1)^{0+(j-2)+(k-3)}$ and $(-1)^{1+(j-2)+(k-3)}$ respectively
(equal in magnitude and opposite in sign), so their contributions
cancel. Each $[c_i,a_k]$ with $i\in\{2,3\}$, $k\in\{3,4\}$ appears
in $\tau_{i2k}$ and $\tau_{i3k}$, with the $j$-indices differing
by one so the signs are again opposite; contributions cancel. Each
$[c_i,b_j]$ with $i\in\{2,3\}$, $j\in\{2,3\}$ appears in
$\tau_{ij3}$ and $\tau_{ij4}$, with opposite signs in the
$k$-parity; contributions cancel.
Every $1$-simplex in $\partial Y$ therefore has coefficient zero,
so $\partial Y=0$ and $Y\in\ker\partial_2$. Moreover $Y\neq 0$ in
$C_2(\mathcal{K}(X);\mathbb{Z})$: its eight summands are pairwise
distinct $2$-simplices, each carrying coefficient $\pm 1$.

Since $X$ has height two (established globally for $|B_X|=3$
before the orbit enumeration), the order complex $\mathcal{K}(X)$
has dimension two; in particular there are no $3$-simplices and
$\partial_3=0$. Therefore
$H_2(\mathcal{K}(X);\mathbb{Z})=\ker\partial_2/\operatorname{im}\partial_3
=\ker\partial_2$, and the class $[Y]=Y$ is non-zero in $H_2$,
contradicting homotopical triviality of $X$.

Both cases eliminate Pattern~6C.

\smallskip\noindent\textbf{Pattern~6D: $\beta=[2,2,2]$, $\alpha=[3,3,3]$,
$C_{ca}=12$.}

$C_{ca}=mn=12$: all $(c,a)$ pairs hold. Since $\alpha_1=\alpha_2=
\alpha_3=3$, the residual $b$-symmetry is the full $S_3$, giving
three orbits: (6D-1) all $L_j$ equal; (6D-2) two equal, one
different; (6D-3) all distinct.

\emph{Orbit~(6D-1)}: $L_j=\{c_1,c_2\}$ for all $j$, so $t(c_3)=0$
and every $c_3$-edge is naked. Since $C_{ca}=mn=12$, every
$(c,a)$-pair is comparable, so $c_3<a$ for every $a\in A$ (four
naked edges). Pick $c_k\in\{c_1,c_2\}$; then $c_k\in L_j$ for every
$j$, so $|M(c_k)|\geq|U_1|=3$ via $b_1$ alone. Choose distinct
$a_p,a_r\in M(c_k)$ and form
$Z=[c_3,a_p]-[c_k,a_p]+[c_k,a_r]-[c_3,a_r]$:
$\{c_3,a_p\},\{c_3,a_r\}$ naked; $\{c_k,a_p\},\{c_k,a_r\}$
transitive (via potentially different middles). $[Z]\neq 0$.

\emph{Orbit~(6D-2)}: by residual $S_3$-symmetry, WLOG the singleton
is $b_3$, so $L_1=L_2=\{c_1,c_2\}$ and $L_3=\{c_1,c_3\}$. Then
$t(c_2)=2$ (below $b_1,b_2$), $t(c_3)=1$. Write $A\setminus U_3=
\{a^*\}$. Lemma~\ref{lem:beat} at $c_3$ forces $\{c_3,a^*\}\in E$
naked ($c_3\notin L_1=L_2$, $a^*\notin U_3$).

If $a^*\in U_k$ for some $k\in\{1,2\}$: pick $a'\in U_k\cap U_3$
(non-empty since $|U_k\cap U_3|\geq|U_k|+|U_3|-|A|=2$). Form
$Z=[c_3,a^*]-[c_1,a^*]+[c_1,a']-[c_3,a']$: $\{c_1,a^*\}$ transitive
via $b_k$, $\{c_3,a'\}$ transitive via $b_3$; $[Z]\neq 0$.

If $a^*\notin U_1\cup U_2$: then $U_1=U_2=U_3=A\setminus\{a^*\}$
and $s(a^*)=0$. By Lemma~\ref{lem:iso}, at least two distinct
elements $c_i,c_j\in C$ satisfy $c_i,c_j<a^*$ nakedly. Pick any
$a'\in U_3$; since each $c\in C$ lies in at least one $L_k$ and
$a'\in U_k=A\setminus\{a^*\}$, the edges $\{c_i,a'\},\{c_j,a'\}$
are transitive. Form
$Z=[c_i,a^*]-[c_j,a^*]+[c_j,a']-[c_i,a']$:
$\{c_i,a^*\}$ and $\{c_j,a^*\}$ naked ($s(a^*)=0$); the other two
edges transitive. By Lemma~\ref{lem:naked}, $[Z]\neq 0$:
contradiction.

\emph{Orbit~(6D-3)}: $L_1=\{c_1,c_2\}$, $L_2=\{c_1,c_3\}$,
$L_3=\{c_2,c_3\}$. Then $t(c_i)=2$ for all $i$, and
$M(c_1)=U_1\cup U_2$, $M(c_2)=U_1\cup U_3$, $M(c_3)=U_2\cup U_3$.
Three canonical sub-orbits arise modulo $\psi:A\to A$, distinguished
by the multiset $\{a_1^*,a_2^*,a_3^*\}$ where $a_j^*:=A\setminus U_j$
(unique since $|U_j|=3$): all three equal (6D-3-a), exactly two
equal (6D-3-b), all distinct (6D-3-c).
(6D-3-a) $U_1=U_2=U_3=\{a_2,a_3,a_4\}$. $T_{\mathrm{tr}}=9$, $D=3$.
$s(a_1)=0$: Lemma~\ref{lem:iso} gives distinct $c_i,c_j<a_1$ naked;
take $c_i=c_1,c_j=c_2$.
Form $Z=[c_1,a_1]-[c_2,a_1]+[c_2,b_1]+[b_1,a_2]-[b_2,a_2]-[c_1,b_2]$:
the $C$--$A$ edges $\{c_1,a_1\}$ and $\{c_2,a_1\}$ are naked; the
four $C$--$B$ and $B$--$A$ edges $\{c_2,b_1\}$, $\{b_1,a_2\}$,
$\{b_2,a_2\}$, $\{c_1,b_2\}$ are direct $1$-simplices of
$\mathcal{K}(X)$ (witnessed by $c_2\in L_1$, $a_2\in U_1$,
$a_2\in U_2$, $c_1\in L_2$, respectively). By Lemma~\ref{lem:naked},
$[Z]\neq 0$.
(6D-3-b) $U_1=U_2=\{a_2,a_3,a_4\}$, $U_3=\{a_1,a_3,a_4\}$. $T_{\mathrm{tr}}=11$, $D=1$.
$s(a_1)=1$ above $b_3$ with $c_1\notin L_3$: Lemma~\ref{lem:beat}
forces $c_1<a_1$ naked.
Form $Z=[c_1,a_1]-[b_3,a_1]-[c_2,b_3]+[c_2,b_1]+[b_1,a_3]-[b_2,a_3]-[c_1,b_2]$:
the only $C$--$A$ edge is $\{c_1,a_1\}$ (naked); the six remaining
edges are direct $C$--$B$ or $B$--$A$ $1$-simplices of $\mathcal{K}(X)$
(witnessed by $a_1\in U_3$, $c_2\in L_3$, $c_2\in L_1$, $a_3\in U_1$,
$a_3\in U_2$, $c_1\in L_2$, respectively). By Lemma~\ref{lem:naked},
$[Z]\neq 0$.
(6D-3-c) $U_1=\{a_2,a_3,a_4\}$, $U_2=\{a_1,a_3,a_4\}$,
$U_3=\{a_1,a_2,a_4\}$. $T_{\mathrm{tr}}=12$, $D=0$.
$s(a)\geq 2$ for all $a$ (specifically $s(a_4)=3$, $s(a_1)=s(a_2)=s(a_3)=2$)
and $t(c)=2$ for all $c$. No element is a beat point:
each $b_j$ has $\beta_j=2$ and $\alpha_j=3$ (both $\geq 2$;
incomparable sets above and below);
each $a\in A$ has $s(a)\geq 2$ middles below it, pairwise incomparable,
so $\hat{U}_a$ has no maximum;
each $c\in C$ has $t(c)=2$ middles above it, pairwise incomparable,
so $\hat{F}_c$ has no minimum. \textbf{Type~II}.

Pattern~6D yields exactly one surviving orbit: Type~II.

\smallskip\noindent\textbf{Pattern~6E: $\beta=[2,2,3]$, $\alpha=[3,3,2]$,
$C_{ca}=12$.}

$L_3=C$, $|U_3|=2$, $U_3=\{a_1,a_2\}$ (in contrast to Pattern~6A
above: there $|U_3|=3$ from $\alpha_3=3$, whereas here $|U_3|=2$
from $\alpha_3=2$, so $U_1\subseteq U_3$ is impossible by
cardinality). Lemma~\ref{lem:fullL} applied to $b_3$ with $L_3=C$
gives the global containment $U_3\subseteq U_1\cup U_2$ (any
$a\in U_3\setminus(U_1\cup U_2)$ would have $s(a)=1$ above $b_3$
only, hence be a down-beat point). Since $\beta_1=\beta_2=2$ and
$\beta_3=3$, the residual $b$-symmetry is $\sigma=(b_1\,b_2)$,
giving two $L$-orbits: (6E-1) $L_1=L_2$ and (6E-2) $L_1\neq L_2$.

\noindent\textit{Orbit~(6E-1)}: $L_1=L_2=\{c_1,c_2\}$, $L_3=C$.
Using the global containment $U_3\subseteq U_1\cup U_2$ established
above, and $|U_1|=|U_2|=3$, $|A|=4$: $|U_1\cup U_2|\in\{3,4\}$.

\emph{Case~$1$: $|U_1\cup U_2|=3$.} Since $|U_1|=|U_2|=3$ and
$|U_1\cup U_2|=3$, we have $U_1=U_2$; relabel so that
$U_1=U_2=\{a_1,a_2,a_3\}$. The containment $U_3\subseteq U_1\cup U_2$
combined with $|U_3|=2$ forces $U_3=\{a_1,a_2\}$ (since $U_3$ is
specified up to relabelling of $A$). Then $a_4\notin U_1\cup U_2
\cup U_3$ (so $s(a_4)=0$), and $a_3\in U_1\cap U_2$ but $a_3\notin U_3$
(so $s(a_3)=2$ above $b_1,b_2$). Compute:
$M(c_1)=M(c_2)=U_1\cup U_2\cup U_3=\{a_1,a_2,a_3\}$ (size~$3$);
$M(c_3)=U_3=\{a_1,a_2\}$ (size~$2$). Hence
$T_{\mathrm{tr}}=3+3+2=8$ and $D=C_{ca}-T_{\mathrm{tr}}=12-8=4$.

Since $C_{ca}=mn=12$, every $(c,a)\in C\times A$ is comparable.
For each $c\in C$, $\{c,a_4\}$ admits no transitive route
($s(a_4)=0$), so all three edges $\{c_1,a_4\},\{c_2,a_4\},\{c_3,a_4\}$
are naked. For $c_3$ and $a_3$: $c_3\notin L_1\cup L_2$ blocks
the $b_1,b_2$-routes, and $a_3\notin U_3$ blocks the $b_3$-route, so
$\{c_3,a_3\}$ is naked as well. These four naked edges saturate
the budget $D=4$.

Pick $c_j\in L_1=L_2$ (say $c_j=c_2$). Form the $5$-edge cycle
\[
Z\;=\;[c_3,a_4]\;-\;[c_2,a_4]\;+\;[c_2,b_1]\;+\;[b_1,a_1]\;-\;[c_3,a_1].
\]
Then $\partial Z=0$, the $C$--$A$ edges $\{c_3,a_4\}$ and $\{c_2,a_4\}$
are naked; $\{c_2,b_1\}$ is a direct $C$--$B$ simplex ($c_2\in L_1$);
$\{b_1,a_1\}$ is a direct $B$--$A$ simplex ($a_1\in U_1$);
$\{c_3,a_1\}$ is transitive via $b_3$ ($c_3\in L_3=C$, $a_1\in U_3$).
By Lemma~\ref{lem:naked}, $[Z]\neq 0$: contradiction.

\emph{Case~$2$: $|U_1\cup U_2|=4$, i.e., $U_1\cup U_2=A$.}
For each $a\in\{a_3,a_4\}$, the pair $(c_3,a)$ admits no transitive
route: $a\notin U_3$ (since $U_3=\{a_1,a_2\}$) excludes the route
via $b_3$, and $c_3\notin L_1=L_2$ excludes the routes via $b_1,b_2$.
Comparability forced by $C_{ca}=mn=12$ makes both $\{c_3,a_3\}$ and
$\{c_3,a_4\}$ naked. Pick $c_j\in L_1=L_2$. Since $a_3,a_4\in
U_1\cup U_2$, each $\{c_j,a_k\}$ ($k\in\{3,4\}$) is transitive: if
$a_k\in U_1$ via $b_1$ (using $c_j\in L_1$), otherwise $a_k\in U_2$
via $b_2$ (using $c_j\in L_2$). Form
$Z=[c_3,a_3]-[c_3,a_4]+[c_j,a_4]-[c_j,a_3]$:
$\partial Z=0$, $\{c_3,a_3\}$ and $\{c_3,a_4\}$ are naked, and
$\{c_j,a_3\}$, $\{c_j,a_4\}$ are transitive. By Lemma~\ref{lem:naked},
$[Z]\neq 0$.

This argument is independent of the partition of $A\setminus U_3=
\{a_3,a_4\}$ between $U_1\setminus U_2$, $U_2\setminus U_1$, and
$U_1\cap U_2$, hence handles all sub-configurations of
$|U_1\cup U_2|=4$ uniformly.

\noindent\textit{Orbit~(6E-2)}: $L_1=\{c_1,c_2\}$,
$L_2=\{c_1,c_3\}$, $L_3=C$. Combining $U_3\subseteq U_1\cup U_2$
with the residual $\psi$-stabilizer of $U_3=\{a_1,a_2\}$ (acting
as $S_2\times S_2$ on $\{a_1,a_2\}\sqcup\{a_3,a_4\}$), exactly four
canonical sub-orbits arise, distinguished by $T_{\mathrm{tr}}\in
\{9,10,11,12\}$ (equivalently $D\in\{3,2,1,0\}$).

(6E-2-a) $U_1=U_2=\{a_1,a_2,a_3\}$. $T_{\mathrm{tr}}=9$, $D=3$. $s(a_4)=0$:
Lemma~\ref{lem:iso} gives $c_i,c_j<a_4$ naked; $c_2\notin L_2$
and $c_3\notin L_1$ make both candidates. Pick $c_2,c_3<a_4$.
Form $Z=[c_2,a_4]-[c_3,a_4]+[c_3,b_2]+[b_2,a_1]-[b_1,a_1]-[c_2,b_1]$:
the $C$--$A$ edges $\{c_2,a_4\}$ and $\{c_3,a_4\}$ are naked; the
four remaining edges $\{c_3,b_2\}$, $\{b_2,a_1\}$, $\{b_1,a_1\}$,
$\{c_2,b_1\}$ are direct $C$--$B$ or $B$--$A$ $1$-simplices of
$\mathcal{K}(X)$ (witnessed by $c_3\in L_2$, $a_1\in U_2$,
$a_1\in U_1$, $c_2\in L_1$, respectively). By Lemma~\ref{lem:naked},
$[Z]\neq 0$.

(6E-2-b) $U_1=\{a_1,a_2,a_3\}$, $U_2=\{a_1,a_2,a_4\}$. $T_{\mathrm{tr}}=10$, $D=2$.
$s(a_3)=1$ above $b_1$ with $c_3\notin L_1$: Lemma~\ref{lem:beat}
forces $c_3<a_3$ naked.
Form $Z=[c_3,a_3]-[c_2,a_3]+[c_2,b_3]-[c_3,b_3]$:
the $C$--$A$ edges are $\{c_3,a_3\}$ (naked) and $\{c_2,a_3\}$
(transitive via $b_1$: $c_2\in L_1$, $a_3\in U_1$); the two
$C$--$B$ edges $\{c_2,b_3\}$ and $\{c_3,b_3\}$ are direct
$1$-simplices ($c_2,c_3\in L_3=C$). By Lemma~\ref{lem:naked},
$[Z]\neq 0$.

(6E-2-c) $U_1=\{a_1,a_2,a_3\}$, $U_2=\{a_2,a_3,a_4\}$. $T_{\mathrm{tr}}=11$, $D=1$.
$s(a_4)=1$ above $b_2$ with $c_2\notin L_2$: Lemma~\ref{lem:beat}
forces $c_2<a_4$ naked.
Form $Z=[c_2,a_4]-[c_1,a_4]+[c_1,b_3]-[c_2,b_3]$:
the $C$--$A$ edges are $\{c_2,a_4\}$ (naked) and $\{c_1,a_4\}$
(transitive via $b_2$: $c_1\in L_2$, $a_4\in U_2$); the two
$C$--$B$ edges $\{c_1,b_3\}$ and $\{c_2,b_3\}$ are direct
$1$-simplices ($c_1,c_2\in L_3=C$). By Lemma~\ref{lem:naked},
$[Z]\neq 0$.

(6E-2-d) $U_1=\{a_2,a_3,a_4\}$, $U_2=\{a_1,a_3,a_4\}$.
$T_{\mathrm{tr}}=12$, $D=0$.
The degree values are $s(a)=2$ for all $a\in A$;
$t(c_1)=3$ (above $b_1,b_2,b_3$), $t(c_2)=2$ (above $b_1,b_3$),
$t(c_3)=2$ (above $b_2,b_3$).
(Verification from the orbit: $c_1\in L_1\cap L_2\cap L_3$;
$c_2\in L_1\cap L_3$ but $c_2\notin L_2$; $c_3\in L_2\cap L_3$ but $c_3\notin L_1$.)
No element is a beat point (verification deferred to the
consolidated Minimality paragraph below, where $s(a)\geq 2$ and
$t(c)\geq 2$ are checked uniformly across all surviving types).
This orbit is isomorphic, as a labelled poset, to the incidence
displayed below as Type~III via the relabelling
$\sigma=(c_1\,c_3)$ on $C$, $\tau=(b_1\,b_2)$ on $B$, and
$\rho=(a_1\,a_3)(a_2\,a_4)$ on $A$. Write $U_j^{\mathrm{disp}}$
(resp.~$L_j^{\mathrm{disp}}$) for the $U_j$ (resp.~$L_j$) data of
the displayed Type~III incidence (to distinguish from the $U_j$,
$L_j$ of the current orbit). One checks directly that
$\rho(U_1)=U_2^{\mathrm{disp}}$, $\rho(U_2)=U_1^{\mathrm{disp}}$,
$\rho(U_3)=U_3^{\mathrm{disp}}$ (the swap on the first two
indices compensates for $\tau=(b_1\,b_2)$), and similarly for
the $L_j$'s under $\sigma$.
\textbf{Type~III}.

Pattern~6E yields exactly one surviving orbit: Type~III.

\smallskip\noindent\textbf{Pattern~6F: $\beta=[2,3,3]$, $\alpha=[3,2,2]$,
$C_{ca}=12$.}

$L_2=L_3=C$, $L_1=\{c_1,c_2\}$. Since $\beta_2=\beta_3=3$ and
$\alpha_2=\alpha_3=2$, the residual $b$-symmetry is $\sigma=(b_2\,b_3)$.
By Lemma~\ref{lem:fullL}, no $a$ with $s(a)=1$ above only $b_2$ or
only $b_3$ exists. Hence $U_2\subseteq U_1\cup U_3$ and
$U_3\subseteq U_1\cup U_2$.

For any $a\in U_1\setminus(U_2\cup U_3)$: $s(a)=1$ above $b_1$ only,
so all transitive routes to $a$ require $c\in L_1$. Hence
$c_3\in C\setminus L_1=\{c_3\}$ has no transitive route to $a$;
since $C_{ca}=mn=12$ forces $c_3<a$ to hold as a $1$-simplex,
$\{c_3,a\}$ must be naked. Set $c':=c_3$, $c_0\in L_1$, $a_r\in U_2$,
and form $Z=[c',a]-[c_0,a]+[c_0,a_r]-[c',a_r]$:
$\{c',a\}$ naked; $\{c_0,a\}$ transitive via $b_1$;
$\{c_0,a_r\}$ and $\{c',a_r\}$ transitive via $b_2$ ($c_0,c'\in L_2=C$,
$a_r\in U_2$). $[Z]\neq 0$. Hence $U_1\subseteq U_2\cup U_3$.

Three sub-orbits arise from $|U_2\cap U_3|\in\{0,1,2\}$:

\emph{Sub-orbit (a)}: $|U_2\cap U_3|=2$, i.e., $U_2=U_3$. Then
$|U_2\cup U_3|=|U_2|=2$. But $U_1\subseteq U_2\cup U_3$, so
$|U_1|\leq 2$; this contradicts $\alpha_1=|U_1|=3$. Vacuous.

\emph{Sub-orbit (b)}: $|U_2\cap U_3|=1$, so $|U_2\cup U_3|=3$.
Combined with $U_1\subseteq U_2\cup U_3$ and $|U_1|=3$, this forces
$U_1=U_2\cup U_3$, so $|U_1\cup U_2\cup U_3|=3$ and there is a
unique element $a^*\in A\setminus(U_1\cup U_2\cup U_3)$ with
$s(a^*)=0$. Up to relabelling on $A$, assume $U_1=\{a_1,a_2,a_3\}$
and $a^*=a_4$. Lemma~\ref{lem:iso} applied to $a_4$ forces at least
two distinct $c_i,c_j\in C$ with $(c_i,a_4),(c_j,a_4)\in E$ (both
naked since $s(a_4)=0$). Pick any $a_r\in U_2$ (non-empty since
$\alpha_2=2$); since $L_2=C$, both $\{c_i,a_r\}$ and $\{c_j,a_r\}$
are transitive via~$b_2$. Form
$Z=[c_i,a_4]-[c_j,a_4]+[c_j,a_r]-[c_i,a_r]$:
$\{c_i,a_4\}$ and $\{c_j,a_4\}$ naked; the remaining two edges
transitive. By Lemma~\ref{lem:naked}, $[Z]\neq 0$: contradiction.

\emph{Sub-orbit (c)}: $U_2\cap U_3=\emptyset$. Then $|U_2\cup U_3|
=|U_2|+|U_3|=4>3=|U_1|$. From $U_2\subseteq U_1\cup U_3$ together
with $U_2\cap U_3=\emptyset$ we get $U_2\subseteq U_1$, and
similarly $U_3\subseteq U_1$. Hence $U_2\cup U_3\subseteq U_1$,
contradicting $|U_2\cup U_3|=4>|U_1|=3$. Impossible.

Pattern~6F is completely eliminated.

\smallskip\noindent\textbf{Pattern~6G: $\beta=[3,3,3]$, $\alpha=[2,2,2]$,
$C_{ca}=12$.}

$L_j=C$ for all $j$, so the residual $b$-symmetry is the full
$S_3$ on $\{b_1,b_2,b_3\}$. By Lemma~\ref{lem:fullL}, no $a$ with
$s(a)=1$ above only one $b_k$ exists; hence $s(a)\in\{0,2,3\}$.
Six canonical sub-orbits arise modulo $(\sigma,\psi)$, labeled (a)--(f).

(6G-a) $U_1=U_2=U_3=\{a_1,a_2\}$. $s(a_3)=s(a_4)=0$:
Lemma~\ref{lem:iso} applied to $a_3$ gives distinct $c_i,c_j<a_3$
naked; take $c_i=c_1, c_j=c_2$. Form
$Z=[c_1,a_3]-[c_2,a_3]+[c_2,b_1]+[b_1,a_1]-[b_2,a_1]-[c_1,b_2]$:
the $C$--$A$ edges $\{c_1,a_3\}$ and $\{c_2,a_3\}$ are naked; the
four $C$--$B$ and $B$--$A$ edges $\{c_2,b_1\}$, $\{b_1,a_1\}$,
$\{b_2,a_1\}$, $\{c_1,b_2\}$ are direct $1$-simplices of
$\mathcal{K}(X)$ (witnessed by $c_2\in L_1=C$, $a_1\in U_1$,
$a_1\in U_2$, $c_1\in L_2=C$, respectively). By Lemma~\ref{lem:naked},
$[Z]\neq 0$.

(6G-b) $U_1=U_2=\{a_1,a_2\}$, $U_3=\{a_1,a_3\}$ ($|U_3\cap U_1|=1$):
$s(a_3)=1$ above $b_3$ only: down-beat (Lemma~\ref{lem:fullL}).
Eliminated.

(6G-c) $U_1=U_2=\{a_1,a_2\}$, $U_3=\{a_3,a_4\}$ ($U_3\cap U_1=\emptyset$):
$s(a_3)=s(a_4)=1$ above $b_3$ only: down-beats. Eliminated.

(6G-d) $U_1=\{a_1,a_2\}$, $U_2=\{a_2,a_3\}$, $U_3=\{a_3,a_4\}$:
$s(a_1)=1$ above $b_1$ only: down-beat. Eliminated.

(6G-e) $U_1=\{a_1,a_2\}$, $U_2=\{a_1,a_3\}$, $U_3=\{a_2,a_3\}$.
$s(a_4)=0$: Lemma~\ref{lem:iso} applied to $a_4$ gives distinct
$c_i,c_j<a_4$ naked; take $c_i=c_1,c_j=c_2$. Form
$Z=[c_1,a_4]-[c_2,a_4]+[c_2,b_1]+[b_1,a_1]-[b_2,a_1]-[c_1,b_2]$:
the $C$--$A$ edges $\{c_1,a_4\}$ and $\{c_2,a_4\}$ are naked; the
four $C$--$B$ and $B$--$A$ edges $\{c_2,b_1\}$, $\{b_1,a_1\}$,
$\{b_2,a_1\}$, $\{c_1,b_2\}$ are direct $1$-simplices of
$\mathcal{K}(X)$ (witnessed by $c_2\in L_1=C$, $a_1\in U_1$,
$a_1\in U_2$, $c_1\in L_2=C$, respectively). By Lemma~\ref{lem:naked},
$[Z]\neq 0$.

(6G-f) $U_1=\{a_1,a_2\}$, $U_2=\{a_1,a_3\}$, $U_3=\{a_1,a_4\}$:
$s(a_2)=s(a_3)=s(a_4)=1$ each above one middle: three down-beats.
Eliminated.

Pattern~6G is completely eliminated.

The foregoing analysis identifies exactly three surviving
configurations. We state them explicitly together with their
complete verification.

\smallskip\noindent\textbf{Type~I}
($\beta=[2,2,2]$, $\alpha=[2,3,4]$):
\begin{gather*}
c_1<\{b_1,b_2\},\quad c_2<\{b_1,b_3\},\quad c_3<\{b_2,b_3\},\\
b_1<\{a_1,a_2\},\quad b_2<\{a_1,a_3,a_4\},\quad b_3<A.
\end{gather*}

\begin{center}\begin{tikzpicture}[scale=0.63]\node (c1) at (0,0){$c_1$};\node (c2) at (2,0){$c_2$};\node (c3) at (4,0){$c_3$};\node (b1) at (0,2){$b_1$};\node (b2) at (2,2){$b_2$};\node (b3) at (4,2){$b_3$};\node (a1) at (0,4){$a_1$};\node (a2) at (2,4){$a_2$};\node (a3) at (4,4){$a_3$};\node (a4) at (6,4){$a_4$};\draw (c1)--(b1);\draw (c1)--(b2);\draw (c2)--(b1);\draw (c2)--(b3);\draw (c3)--(b2);\draw (c3)--(b3);\draw (b1)--(a1);\draw (b1)--(a2);\draw (b2)--(a1);\draw (b2)--(a3);\draw (b2)--(a4);\draw (b3)--(a1);\draw (b3)--(a2);\draw (b3)--(a3);\draw (b3)--(a4);\end{tikzpicture}\end{center}

\smallskip\noindent\textbf{Type~II}
($\beta=[2,2,2]$, $\alpha=[3,3,3]$):
\begin{gather*}
c_1<\{b_1,b_2\},\quad c_2<\{b_1,b_3\},\quad c_3<\{b_2,b_3\},\\
b_1<\{a_1,a_2,a_3\},\quad b_2<\{a_1,a_2,a_4\},\quad b_3<\{a_1,a_3,a_4\}.
\end{gather*}
\begin{center}\begin{tikzpicture}[scale=0.63]\node (c1) at (0,0){$c_1$};\node (c2) at (2,0){$c_2$};\node (c3) at (4,0){$c_3$};\node (b1) at (0,2){$b_1$};\node (b2) at (2,2){$b_2$};\node (b3) at (4,2){$b_3$};\node (a1) at (0,4){$a_1$};\node (a2) at (2,4){$a_2$};\node (a3) at (4,4){$a_3$};\node (a4) at (6,4){$a_4$};\draw (c1)--(b1);\draw (c1)--(b2);\draw (c2)--(b1);\draw (c2)--(b3);\draw (c3)--(b2);\draw (c3)--(b3);\draw (b1)--(a1);\draw (b1)--(a2);\draw (b1)--(a3);\draw (b2)--(a1);\draw (b2)--(a2);\draw (b2)--(a4);\draw (b3)--(a1);\draw (b3)--(a3);\draw (b3)--(a4);\end{tikzpicture}\end{center}

\smallskip\noindent\textbf{Type~III}
($\beta=[2,2,3]$, $\alpha=[3,3,2]$, with $b_3$ satisfying $\beta_3=3$):
\begin{gather*}
c_1<\{b_1,b_3\},\quad c_2<\{b_2,b_3\},\quad c_3<\{b_1,b_2,b_3\},\\
b_1<\{a_1,a_2,a_3\},\quad b_2<\{a_1,a_2,a_4\},\quad b_3<\{a_3,a_4\}.
\end{gather*}
\begin{center}\begin{tikzpicture}[scale=0.63]\node (c1) at (0,0){$c_1$};\node (c2) at (2,0){$c_2$};\node (c3) at (4,0){$c_3$};\node (b1) at (0,2){$b_1$};\node (b2) at (2,2){$b_2$};\node (b3) at (4,2){$b_3$};\node (a1) at (0,4){$a_1$};\node (a2) at (2,4){$a_2$};\node (a3) at (4,4){$a_3$};\node (a4) at (6,4){$a_4$};\draw (c1)--(b1);\draw (c1)--(b3);\draw (c2)--(b2);\draw (c2)--(b3);\draw (c3)--(b1);\draw (c3)--(b2);\draw (c3)--(b3);\draw (b1)--(a1);\draw (b1)--(a2);\draw (b1)--(a3);\draw (b2)--(a1);\draw (b2)--(a2);\draw (b2)--(a4);\draw (b3)--(a3);\draw (b3)--(a4);\end{tikzpicture}\end{center}

\smallskip\noindent\textbf{Minimality.}
For each middle element $b_j$: the set $\hat{F}_{b_j}=U_j$ contains
$\alpha_j\geq 2$ pairwise incomparable maximal elements, so has no
minimum and $b_j$ is not an up-beat point; dually
$\hat{U}_{b_j}=L_j$ contains $\beta_j\geq 2$ pairwise incomparable
minimal elements, so has no maximum and $b_j$ is not a down-beat
point.

For each maximal element $a\in A$: $\hat{F}_a=\emptyset$, so $a$
is not an up-beat point. The set $\hat{U}_a$ contains all middles
$b_j$ with $b_j<a$; since $s(a)\geq 2$ (as verified below) and
distinct middles are incomparable, $\hat{U}_a$ has no maximum and
$a$ is not a down-beat point.

For each minimal element $c\in C$: $\hat{U}_c=\emptyset$, so $c$
is not a down-beat point. The set $\hat{F}_c$ contains all middles
$b_j$ with $c<b_j$; since $t(c)\geq 2$ and distinct middles are
incomparable, $\hat{F}_c$ has no minimum and $c$ is not an up-beat
point.

One verifies $s(a)\geq 2$ and $t(c)\geq 2$ throughout
(so no beat points occur):
in Type~I, $s(a_1)=3$ and $s(a_2)=s(a_3)=s(a_4)=2$;
in Type~II, $s(a_1)=3$ and $s(a_2)=s(a_3)=s(a_4)=2$;
in Type~III, $s(a_1)=s(a_2)=2$ (each above $b_1,b_2$)
and $s(a_3)=s(a_4)=2$ (each above $b_1,b_3$ or $b_2,b_3$). In Types~I and~II, $t(c)=2$ for all $c\in C$ (each $c$ lies
in exactly two of the three $L_j$'s, as verified from the
all-distinct $L$-orbit $L_1=\{c_1,c_2\}$, $L_2=\{c_1,c_3\}$,
$L_3=\{c_2,c_3\}$).
In Type~III, $\beta_3=3$ means $L_3=C$, so
$t(c_1)=t(c_2)=2$ and $t(c_3)=3$ (with $c_3<b_1,b_2,b_3$).
For each type, every $c\in C$ lies in at least two $L_j$'s, and
the middles above each $c$ are pairwise incomparable ($B$ an antichain),
so $\hat{F}_c$ has no minimum: no element of $C$ is an up-beat point.
One checks from the incidence data that
$M(c_i)=A$ for every $c_i$, giving $T_{\mathrm{tr}}=12=C_{ca}$ and $D=0$;
no naked-edge obstruction applies. Each space has $C_{ca}=12$,
giving $18$ triangles in $\mathcal{K}(X)$.

\smallskip\noindent\textbf{Contractibility.}
At each step, the free-edge claim is verified from the triangle
list: every other triangle containing the stated edge has been
removed in an earlier step. As a representative: for Type~I,
Step~1 removes $(c_1,b_1,a_2)$ via $\{c_1,a_2\}$. Since
$b_2<\{a_1,a_3,a_4\}$ does not include $a_2$, the only triangle
containing both $c_1$ and $a_2$ is $(c_1,b_1,a_2)$; so
$\{c_1,a_2\}$ is free at Step~1.

\smallskip\noindent\textit{Triangles of Type~I} (listed in full as a
reference; subsequent types follow the same convention: each triangle
is a chain $c<b<a$ with $c\in L_j$ and $a\in U_j$ for some~$j$,
generated mechanically from the displayed incidence).
{\small Through $b_1$: $(c_1,b_1,a_1)$, $(c_1,b_1,a_2)$, $(c_2,b_1,a_1)$, $(c_2,b_1,a_2)$. Through $b_2$: $(c_1,b_2,a_1)$, $(c_1,b_2,a_3)$, $(c_1,b_2,a_4)$, $(c_3,b_2,a_1)$, $(c_3,b_2,a_3)$, $(c_3,b_2,a_4)$. Through $b_3$: $(c_2,b_3,a_1)$, $(c_2,b_3,a_2)$, $(c_2,b_3,a_3)$, $(c_2,b_3,a_4)$, $(c_3,b_3,a_1)$, $(c_3,b_3,a_2)$, $(c_3,b_3,a_3)$, $(c_3,b_3,a_4)$.}

\noindent\textbf{Collapse sequence for Type~I.}
In each of the collapse sequences for Types~I, II, III below,
$(\sigma)\searrow\{\tau\}$ means: remove triangle $\sigma$ by the
free edge $\tau$ (a $1$-simplex $\tau\subset\sigma$ belonging to no
other remaining triangle). Each step has been verified against the
running triangle list (the initial triangle list is exhibited
immediately preceding the corresponding collapse sequence, and the
freeness of $\tau$ at each step is a finite check on the triangles
remaining at that step). The same convention will apply to
Types~IV--VII in Section~\ref{sec:bx4}.

{\small\sloppy $(c_1,b_1,a_2)\searrow\{c_1,a_2\}$;\allowbreak
$(c_1,b_1,a_1)\searrow\{c_1,b_1\}$;\allowbreak
$(c_2,b_1,a_2)\searrow\{b_1,a_2\}$;\allowbreak
$(c_2,b_1,a_1)\searrow\{c_2,b_1\}$;\allowbreak
$(c_1,b_2,a_4)\searrow\{c_1,a_4\}$;\allowbreak
$(c_1,b_2,a_3)\searrow\{c_1,a_3\}$;\allowbreak
$(c_1,b_2,a_1)\searrow\{c_1,b_2\}$;\allowbreak
$(c_3,b_2,a_4)\searrow\{b_2,a_4\}$;\allowbreak
$(c_3,b_2,a_3)\searrow\{b_2,a_3\}$;\allowbreak
$(c_3,b_2,a_1)\searrow\{c_3,b_2\}$;\allowbreak
$(c_2,b_3,a_4)\searrow\{c_2,a_4\}$;\allowbreak
$(c_2,b_3,a_3)\searrow\{c_2,a_3\}$;\allowbreak
$(c_2,b_3,a_2)\searrow\{c_2,a_2\}$;\allowbreak
$(c_2,b_3,a_1)\searrow\{c_2,b_3\}$;\allowbreak
$(c_3,b_3,a_4)\searrow\{c_3,a_4\}$;\allowbreak
$(c_3,b_3,a_3)\searrow\{c_3,a_3\}$;\allowbreak
$(c_3,b_3,a_2)\searrow\{c_3,a_2\}$;\allowbreak
$(c_3,b_3,a_1)\searrow\{c_3,b_3\}$.\par}
Remaining spanning tree: $\{b_1,a_1\}$, $\{b_2,a_1\}$, $\{b_3,a_1\}$, $\{b_3,a_2\}$, $\{b_3,a_3\}$, $\{b_3,a_4\}$, $\{c_1,a_1\}$, $\{c_2,a_1\}$, $\{c_3,a_1\}$.

\smallskip\noindent\textbf{Collapse sequence for Type~II.}
{\small\sloppy $(c_1,b_1,a_3)\searrow\{c_1,a_3\}$;\allowbreak
$(c_1,b_2,a_4)\searrow\{c_1,a_4\}$;\allowbreak
$(c_2,b_3,a_4)\searrow\{c_2,a_4\}$;\allowbreak
$(c_3,b_3,a_3)\searrow\{c_3,a_3\}$;\allowbreak
$(c_3,b_2,a_2)\searrow\{c_3,a_2\}$;\allowbreak
$(c_2,b_1,a_2)\searrow\{c_2,a_2\}$;\allowbreak
$(c_1,b_1,a_2)\searrow\{b_1,a_2\}$;\allowbreak
$(c_1,b_2,a_2)\searrow\{c_1,a_2\}$;\allowbreak
$(c_2,b_1,a_3)\searrow\{b_1,a_3\}$;\allowbreak
$(c_2,b_3,a_3)\searrow\{c_2,a_3\}$;\allowbreak
$(c_3,b_3,a_4)\searrow\{b_3,a_4\}$;\allowbreak
$(c_3,b_2,a_4)\searrow\{b_2,a_4\}$;\allowbreak
$(c_1,b_1,a_1)\searrow\{c_1,b_1\}$;\allowbreak
$(c_2,b_1,a_1)\searrow\{c_2,b_1\}$;\allowbreak
$(c_1,b_2,a_1)\searrow\{c_1,b_2\}$;\allowbreak
$(c_2,b_3,a_1)\searrow\{c_2,b_3\}$;\allowbreak
$(c_3,b_2,a_1)\searrow\{c_3,b_2\}$;\allowbreak
$(c_3,b_3,a_1)\searrow\{c_3,b_3\}$.\par}
Remaining spanning tree: $\{b_1,a_1\}$, $\{b_2,a_1\}$, $\{b_2,a_2\}$, $\{b_3,a_1\}$, $\{b_3,a_3\}$, $\{c_1,a_1\}$, $\{c_2,a_1\}$, $\{c_3,a_1\}$, $\{c_3,a_4\}$.

\smallskip\noindent\textbf{Collapse sequence for Type~III.}
{\small\sloppy $(c_1,b_1,a_2)\searrow\{c_1,a_2\}$;\allowbreak
$(c_3,b_1,a_2)\searrow\{b_1,a_2\}$;\allowbreak
$(c_3,b_2,a_2)\searrow\{c_3,a_2\}$;\allowbreak
$(c_2,b_2,a_2)\searrow\{b_2,a_2\}$;\allowbreak
$(c_1,b_3,a_4)\searrow\{c_1,a_4\}$;\allowbreak
$(c_1,b_3,a_3)\searrow\{c_1,b_3\}$;\allowbreak
$(c_1,b_1,a_1)\searrow\{c_1,a_1\}$;\allowbreak
$(c_3,b_1,a_1)\searrow\{b_1,a_1\}$;\allowbreak
$(c_3,b_2,a_1)\searrow\{c_3,a_1\}$;\allowbreak
$(c_2,b_2,a_1)\searrow\{b_2,a_1\}$;\allowbreak
$(c_2,b_2,a_4)\searrow\{c_2,b_2\}$;\allowbreak
$(c_3,b_2,a_4)\searrow\{b_2,a_4\}$;\allowbreak
$(c_3,b_3,a_4)\searrow\{c_3,a_4\}$;\allowbreak
$(c_2,b_3,a_4)\searrow\{b_3,a_4\}$;\allowbreak
$(c_1,b_1,a_3)\searrow\{c_1,b_1\}$;\allowbreak
$(c_3,b_1,a_3)\searrow\{c_3,b_1\}$;\allowbreak
$(c_2,b_3,a_3)\searrow\{c_2,b_3\}$;\allowbreak
$(c_3,b_3,a_3)\searrow\{c_3,b_3\}$.\par}
Remaining spanning tree: $\{c_3,b_2\}$, $\{b_1,a_3\}$, $\{b_3,a_3\}$, $\{c_1,a_3\}$, $\{c_2,a_1\}$, $\{c_2,a_2\}$, $\{c_2,a_3\}$, $\{c_2,a_4\}$, $\{c_3,a_3\}$.

\smallskip
In each of the three cases, the eighteen elementary collapses
reduce $\mathcal{K}(X)$ to the stated spanning tree on ten vertices
(nine edges). The spanning tree then collapses to a single point
by nine further elementary collapses: at each stage a leaf vertex
$v$ of degree one in the remaining tree is a free face of its
unique incident edge $\{v,w\}$ (since $v$ belongs to no other
maximal simplex), so removing $v$ and $\{v,w\}$ is a valid
elementary collapse; every finite tree with at least two vertices
has a leaf, so the process terminates. In total $18+9=27$
elementary collapses reduce $\mathcal{K}(X)$ to a point, hence
$\mathcal{K}(X)$ is collapsible and contractible. By McCord's
theorem~\cite{McCord1966}, $|\mathcal{K}(X)|$ is weakly homotopy
equivalent to $X$; since $|\mathcal{K}(X)|$ is contractible, $X$
is homotopically trivial.
\smallskip\noindent\textbf{Non-homeomorphism.}
Since a homeomorphism of finite $T_0$-spaces is an order isomorphism
(the Alexandrov topology makes continuity in both directions
equivalent to order-preservation in both directions), it maps each
layer to itself and preserves the upper degree of each middle element.
Hence the sorted multiset $\{\alpha_j:j=1,2,3\}$
(the upper degrees of the middles, i.e., the row sums of the $B$--$A$
incidence matrix; not to be confused with the column-sum multiset
$\{s(a):a\in A\}$ of $A$-degrees, which can coincide between distinct
types) is a homeomorphism invariant:
\[
\text{Type~I}:\;\{2,3,4\},\qquad
\text{Type~II}:\;\{3,3,3\},\qquad
\text{Type~III}:\;\{2,3,3\}.
\]
These three multisets are pairwise distinct, so Types~I, II, III are pairwise
non-homeomorphic.

Taking the order-dual of each type yields three further spaces with
layer vector $(4,3,3)$. All properties (homotopical triviality,
non-contractibility, and minimality) are preserved under order
duality, and $\mathcal{K}(X^{\mathrm{op}})\cong\mathcal{K}(X)$ as simplicial complexes, and the collapse sequences above
certify contractibility of each dual. The dual spaces are pairwise
non-homeomorphic: their sorted upper degree multisets
$\{\alpha_j^{\mathrm{op}}\}=\{\beta_j\}$ are $\{2,2,2\}$ for
Type~I$^{\mathrm{op}}$ and Type~II$^{\mathrm{op}}$, and $\{2,2,3\}$
for Type~III$^{\mathrm{op}}$. Since Type~I$^{\mathrm{op}}$ and
Type~II$^{\mathrm{op}}$ share $\{\alpha^{\mathrm{op}}\}=\{2,2,2\}$,
they are separated by the lower degree multiset:
$\{\beta_j^{\mathrm{op}}\}=\{\alpha_j\}$ gives $\{2,3,4\}$ versus
$\{3,3,3\}$. Each dual is non-homeomorphic to any of Types~I, II, III
since the layer vectors $(4,3,3)$ and $(3,3,4)$ differ. Since order duality is a bijection between
spaces with $(m,n)=(3,4)$ and spaces with $(m,n)=(4,3)$ (each space
$X^{\mathrm{op}}$ is the unique dual), the three surviving types for
$(3,4)$ yield exactly three distinct duals for $(4,3)$. There are
exactly six spaces in total.
\end{proof}

\section{Case $|B_X|=4$: Exactly Four Spaces}\label{sec:bx4}

\begin{theorem}\label{thm:bx4}
There are exactly four homotopically trivial non-contractible
minimal finite $T_0$-spaces with $|X|=10$ and $|B_X|=4$, up to
homeomorphism.
\end{theorem}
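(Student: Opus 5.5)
By Lemma~\ref{lem:size}, $|B_X|=4$ forces $m+n=6$ with $m,n\geq 3$, so $(m,n)=(3,3)$. This layer vector is self-dual, so the order-dual of any survivor is again a candidate in the same list; duality cuts the enumeration roughly in half and must be accounted for when counting homeomorphism classes. I will proceed in three stages, mirroring Theorem~\ref{thm:bx3}: eliminate height three, enumerate antichain configurations by degree pattern, and verify the survivors.

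\textbf{Height reduction.} By Theorem~\ref{thm:height}, $h(X)\leq 3$, so every chain in $B$ has length at most one and Lemma~\ref{lem:euler3} applies. Suppose $b_i<b_k$. Lemma~\ref{lem:L31} gives $|\hat U_{b_k}|\geq\beta_i+2\geq 4$. Since $\hat U_{b_k}\subseteq C\cup(B\setminus\{b_k\})$, this forces either $L_k=C$ or $b_k$ covering at least two middles. Dually, $\alpha_i\geq\alpha_k+1$ with $\alpha_i\leq 3$ gives $\alpha_k=2$ and $\alpha_i=3$, so $U_i=A$. I then expect the argument of Theorem~\ref{thm:height} (Case $|B_X|=4$) to adapt. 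If $U_i=A$ and $b_i$ lies below every middle above some $c\in L_i$, then $c$ is an up-beat point. Otherwise some $c\in L_i$ lies below a middle $b_\ell\parallel b_i$, and I will use Lemma~\ref{lem:euler3} with the pattern of $E_B$ (one, two, or a ``V''/``$\Lambda$''/``N'' shape of covers) to pin down $C_{ca}\leq 9$. Any resulting naked edge is then killed by Lemma~\ref{lem:template} together with Lemma~\ref{lem:fullL}. This step is a finite case split on the shape of the comparability graph of $B$, organised like Cases (0a)--(0c).

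\textbf{Antichain enumeration.} With $B$ an antichain, \eqref{eq:1} gives $C_{ca}=\sum_{j=1}^4 k_j+5-4=\sum_j k_j+1$, where $k_j=(\beta_j-1)(\alpha_j-1)$ and $\beta_j,\alpha_j\in\{2,3\}$, so $k_j\in\{1,2,4\}$. The bound $C_{ca}\leq 9$ gives $\sum k_j\leq 8$. This leaves a short list of patterns, from all $(2,2)$ with $C_{ca}=5$ up to mixtures such as three $(2,2)$ and one $(3,3)$, or several $(2,3)$ and $(3,2)$. I will prune them as follows:
\begin{itemize}
\item Any $\beta_j=3$ means $L_j=C$, so Lemma~\ref{lem:fullL} forces $U_j$ into the union of the other $U$'s; dually for $\alpha_j=3$.
\item Lemma~\ref{lem:L33} bounds pairwise overlaps: two middles cannot share two lower and two upper neighbours.
\end{itemize}
Within each pattern I will enumerate $L$-orbits, then $U$-orbits, under the residual symmetry. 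Each orbit is eliminated by the same priority order as before: budget $D$ versus forced naked edges, then a beat point, then a naked-edge $1$-cycle.

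\textbf{Main obstacle.} I expect configurations with $D=0$, no beat points, and yet nontrivial topology. Here the $1$-cycle tools are powerless, and I will need either an explicit $2$-cycle as in Pattern~6C, or a rank computation of $H_2$ or $H_3$ (the case referred to as ``Case~G''). For these I will compute $\chi$ and directly exhibit $\ker\partial_2$.

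\textbf{Survivors.} The expected survivors are four spaces, Types~IV--VII, each with $D=0$ and $s(a),t(c)\geq 2$ throughout. For each I will list its triangles and give an explicit sequence of elementary collapses down to a spanning tree, which certifies contractibility. Non-homeomorphism will be checked via the degree multisets $\{\alpha_j\}$, $\{\beta_j\}$, $\{s(a)\}$, $\{t(c)\}$. Finally, I must confirm that the four survivors are closed under order duality: each is either self-dual or paired with another in the list. This avoids double counting and is consistent with the count of exactly four.
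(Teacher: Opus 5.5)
\textbf{The Euler identity in your antichain stage is miscomputed, and the error is load-bearing.} With $(m,r,n)=(3,4,3)$, \eqref{eq:1} gives
$\chi=10-C_{ca}+\sum_j(\beta_j\alpha_j-\beta_j-\alpha_j)=6-C_{ca}+\sum_j k_j$.
So $\chi=1$ means $C_{ca}=\sum_j k_j+5$, not $\sum_j k_j+1$; this is also \eqref{eq:2}, whose right side is $C_{ca}-(m+n-1)$.

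With the correct identity, $k_j\geq 1$ and $C_{ca}\leq mn=9$ give $4\leq\sum_j k_j\leq 4$. Hence $\beta_j=\alpha_j=2$ for every $j$ and $C_{ca}=9$: every minimal element lies below every maximal element. This is what makes the classification short:
\begin{itemize}
\item no pattern containing a $3$ exists;
\item any naked edge $(c,a^*)$ gives a non-bounding cycle by Lemma~\ref{lem:template}, because all pairs in $C\times A$ are comparable, so $D=0$ and $M(c)=A$ for every $c$;
\item what remains is a pair of $4\times 3$ incidence matrices with row sums $2$.
\end{itemize}
Your formula actively breaks the bookkeeping. You assign $C_{ca}=5$ to the all-$(2,2)$ pattern, but each of Types~IV--VII has $T_{\mathrm{tr}}=9$. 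Your own test ``$T_{\mathrm{tr}}>C_{ca}$: impossible'' would therefore discard all four survivors. Meanwhile every pattern with a $3$ that you plan to enumerate is vacuous.

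For the same reason, the ``main obstacle'' you anticipate does not arise here. In the antichain stage $\mathcal{K}(X)$ is $2$-dimensional, and only $1$-cycles are needed. Your use of Lemma~\ref{lem:L33} is a nice shortcut at this point: it kills $L_i=L_j$, $U_i=U_j$ directly, where the paper writes out a $6$-edge cycle.

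\textbf{Higher homology is needed in the height-three stage, and there your sketch is inaccurate and incomplete.} The dual of Lemma~\ref{lem:L31} reads $\alpha_i+u_i\geq\alpha_k+2$, where $u_i$ is the number of middles above $b_i$. So $\alpha_i\geq\alpha_k+1$ and $U_i=A$ follow only when $u_i=1$. This fails in the N-, claw- and $K_{2,2}$-shapes, mirroring the exception you correctly noted for $\hat U_{b_k}$.

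More seriously, the $K_{2,2}$ shape $b_1,b_2<b_3,b_4$ cannot be closed with Euler counts, beat points, Lemma~\ref{lem:template} and Lemma~\ref{lem:fullL}. Take $L_j=C$ for all $j$, $U_1=U_2=A$, and $U_3=U_4$ of size $2$. This poset:
\begin{itemize}
\item is minimal;
\item satisfies \eqref{eq:2} with $C_{ca}=9$, so $\chi=1$;
\item has $D=0$;
\item has no $a$ with $s(a)=1$.
\end{itemize}
Yet for distinct $c,c_0\in C$ and $a,a_0\in U_3$, the $16$ tetrahedra of the join of $\{c,c_0\}$, $\{b_1,b_2\}$, $\{b_3,b_4\}$, $\{a,a_0\}$, taken with alternating signs, form a $3$-cycle. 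It is nonzero in $H_3(\mathcal{K}(X);\mathbb{Z})$ because $\mathcal{K}(X)$ has no $4$-simplices.

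This is the paper's Case~G. There one first forces $U_3=U_4$ and $L_1=L_2$ by beat-point and naked-edge arguments, so that this octahedral $3$-sphere exists in general. Your obstacle paragraph names Case~G and $H_3$, but attaches them to the antichain stage and gives no construction. Computing $\chi$ is of no help here, since $\chi=1$.

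On the positive side, you are right to list the V- and $\Lambda$-shapes. They are absent from Table~\ref{tab:bx4classes}, but they are easily disposed of: Lemma~\ref{lem:L31} and \eqref{eq:2} force $C_{ca}=9$, and then a template cycle finishes them.
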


\begin{proof}
By Lemma~\ref{lem:size}, $m=n=3$. Write
$C=\{c_1,c_2,c_3\}$, $B=\{b_1,b_2,b_3,b_4\}$,
$A=\{a_1,a_2,a_3\}$. Since $X$ is homotopically trivial,
$\chi(\mathcal{K}(X))=1$ and $H_n(\mathcal{K}(X))=0$ for all
$n\geq 1$. Minimality requires $\beta_j,\alpha_j\geq 2$ for all $j$
(otherwise the corresponding middle element $b_j$ would be
a beat point).

\smallskip\noindent\textbf{Step~1.}
\emph{The set $B$ is an antichain.}

By Theorem~\ref{thm:height}, $h(X)\leq 3$.
There are sixteen unlabelled posets on four elements (OEIS~A000112):
one is the antichain (which forms a self-dual class on its own),
and the remaining fifteen non-antichain posets fall into ten duality
classes under the equivalence $P\sim P^{\mathrm{op}}$. Eliminating
one member of a non-antichain dual pair automatically eliminates
its dual, since $\mathcal{K}(X^{\mathrm{op}})\cong\mathcal{K}(X)$
as simplicial complexes. The antichain configuration on $B$ is the
case of interest in the surviving classification (Steps~2--5 below);
the other ten non-antichain duality classes are eliminated case by
case in Step~1.

The ten duality classes are listed in Table~\ref{tab:bx4classes},
where for readability the self-dual single-cover class (a single
cover with two isolated elements) is displayed twice, once as
Case~J and once as Case~K; these two rows denote isomorphic
unlabelled posets and the elimination in Case~K is logically
redundant with Case~J, but we list them separately to make the
bookkeeping explicit.

\begin{table}[ht]
\centering\footnotesize
\begin{tabular}{cll}
\toprule
Class & Representative $E_B$ & Contradiction \\
\midrule
A & $b_1{<}b_2$, $b_2{<}b_3$, $b_3{<}b_4$ & $h(X)\geq 4$, violates Thm.~\ref{thm:height} \\
B & $b_1{<}b_3$, $b_3{<}b_4$, $b_2{<}b_4$ & contains $3$-chain $b_1{<}b_3{<}b_4$: $h(X)\geq 4$ \\
C & $b_1{<}b_2$, $b_1{<}b_3$, $b_2{<}b_4$, $b_3{<}b_4$ & contains $3$-chain $b_1{<}b_2{<}b_4$: $h(X)\geq 4$ \\
D & $b_1{<}b_2$, $b_2{<}b_3$, $b_2{<}b_4$ ($b_3{\parallel}b_4$) & $|\hat{U}_{b_3}|\geq 6>5=|\{b_1,b_2\}\cup C|$: impossible \\
E & $b_1{<}b_3$, $b_2{<}b_4$ & $C_{ca}\geq 11>mn$ via Lemma~\ref{lem:euler3} \\
F & $b_1{<}b_3$, $b_1{<}b_4$, $b_2{<}b_4$ & 4 budget-feasible incidence combinations, each eliminated in Step~F-4 \\
G & $b_1{<}b_3$, $b_1{<}b_4$, $b_2{<}b_3$, $b_2{<}b_4$ & $\operatorname{rank} H_3\geq 1$ via Step~G-3 \\
H & $b_1{<}b_2$, $b_1{<}b_3$, $b_1{<}b_4$ (with $b_1$ min) & $L_2=L_3=L_4=C$; every $c\in L_1$ up-beat \\
I & $b_1{<}b_4$, $b_2{<}b_4$, $b_3{<}b_4$ (with $b_4$ max) & dual of~H; every $a\in U_4$ down-beat \\
J & $b_1{<}b_2$; $b_3,b_4$ incomparable to all & $C_{ca}\geq 10>mn$ via Lemma~\ref{lem:euler3} \\
K & $b_2{<}b_1$; $b_3,b_4$ incomparable to all & dual of~J (same unlabeled poset) \\
\bottomrule
\end{tabular}
\caption{The ten duality classes of non-antichain posets on
$B=\{b_1,b_2,b_3,b_4\}$, with the contradiction eliminating each.
Cases~J and~K denote the same unlabelled poset (the single
self-dual cover with two isolated elements).}
\label{tab:bx4classes}
\end{table}

\smallskip

Cases~A--I handle nine of these classes. The remaining class,
a single cover with two isolated elements (listed twice in
Table~\ref{tab:bx4classes} as Cases~J and~K for bookkeeping),
is eliminated by an Euler-formula contradiction. We treat each
case in turn.

\noindent\textsc{Case A: $b_1<b_2<b_3<b_4$.}
Any $c\in L_1$ gives a length-$4$ chain: $h(X)\geq 4$: contradiction.

\noindent\textsc{Case B: $b_1<b_3<b_4$, $b_2<b_4$,
$b_2\parallel b_3$, $b_2\parallel b_1$.}
The structure contains the $3$-chain $b_1<b_3<b_4$ in $B$. Since
$b_2\parallel b_1$ and no other middle lies below $b_1$, the strict
downset $\hat{U}_{b_1}=L_1\subseteq C$, so minimality gives
$|L_1|\geq 2$ and we may pick $c\in L_1$. Dually, no middle lies
above $b_4$, so $\hat{F}_{b_4}=U_4\subseteq A$, $|U_4|\geq 2$, and
we may pick $a\in U_4$. The resulting chain
$c<b_1<b_3<b_4<a$
in $X$ has length~$4$, contradicting Theorem~\ref{thm:height}.

\noindent\textsc{Case C: $b_1<b_2<b_4$, $b_1<b_3<b_4$,
$b_2\parallel b_3$.}
The structure contains the $3$-chain $b_1<b_2<b_4$ in $B$. Since
$b_1$ is the unique minimum of the poset on $B$ (every other middle
lies above $b_1$, transitively or directly), no middle lies below
$b_1$, so $\hat{U}_{b_1}=L_1\subseteq C$ and $|L_1|\geq 2$ by
minimality; pick $c\in L_1$. Dually, $b_4$ is the unique maximum of
the poset on $B$, so $\hat{F}_{b_4}=U_4\subseteq A$ and $|U_4|\geq 2$;
pick $a\in U_4$. The chain
$c<b_1<b_2<b_4<a$
has length~$4$ in $X$, contradicting Theorem~\ref{thm:height}.

\noindent\textsc{Case D: $b_1<b_2<b_3$, $b_1<b_2<b_4$,
$b_3\parallel b_4$.}
Lemma~\ref{lem:L31} applied to $b_1<b_2$ and then $b_2<b_3$ gives
$|\hat{U}_{b_3}|\geq|\hat{U}_{b_1}|+4\geq 2+4=6$. But
$\hat{U}_{b_3}\subseteq\{b_1,b_2\}\cup C$ has size at most $2+3=5$:
contradiction.

\noindent\textsc{Case E: $E_B=\{(1,3),(2,4)\}$.}
Lemma~\ref{lem:L31} applied to $b_1<b_3$ gives
$|\hat{U}_{b_3}|\geq|\hat{U}_{b_1}|+2=\beta_1+2\geq 4$.
Since $\hat{U}_{b_3}\subseteq C\cup\{b_1\}$ (the only element of
$B$ strictly below $b_3$ is $b_1$, as $b_2\parallel b_3$ and
$b_4\parallel b_3$), we have $|\hat{U}_{b_3}|=\beta_3+1\geq 4$,
so $\beta_3\geq 3$, hence $L_3=C$ and $\beta_3=3$. Combined with
$\beta_1+1\leq\beta_3=3$ and $\beta_1\geq 2$, this forces
$\beta_1=2$. Similarly, applying Lemma~\ref{lem:L31} to
$b_2<b_4$ gives $\beta_4=3$, $L_4=C$, and $\beta_2=2$.

The dual of Lemma~\ref{lem:L31} applied to $b_1<b_3$ gives
$\alpha_1\geq\alpha_3+1$, and to $b_2<b_4$ gives
$\alpha_2\geq\alpha_4+1$. Since $\alpha_3,\alpha_4\geq 2$, we get
$\alpha_1,\alpha_2\geq 3$. Combined with $\alpha_j\leq n=3$, this
forces $\alpha_1=\alpha_2=3$, so $U_1=U_2=A$, and
$\alpha_3=\alpha_4=2$.

Applying Lemma~\ref{lem:euler3} with $E_B=\{(1,3),(2,4)\}$,
$\beta_1=\beta_2=2$, $\beta_3=\beta_4=3$, $\alpha_1=\alpha_2=3$,
$\alpha_3=\alpha_4=2$, the left-hand side of~\eqref{eq:2} is
\begin{align*}
\text{LHS}&=\sum_{j=1}^4(\beta_j-1)(\alpha_j-1)
-(\beta_1-1)(\alpha_3-1)-(\beta_2-1)(\alpha_4-1)\\
&=(1)(2)+(1)(2)+(2)(1)+(2)(1)-(1)(1)-(1)(1)=6.
\end{align*}
Hence $C_{ca}=\text{LHS}+5=11>9=mn$: contradiction.

\noindent\textsc{Case F: $E_B=\{(1,3),(1,4),(2,4)\}$.}

The covering structure is $b_1<b_3$, $b_1<b_4$, $b_2<b_4$, with
$b_1\parallel b_2$, $b_2\parallel b_3$, and $b_3\parallel b_4$.
By transitivity $L_1\subseteq L_3\cap L_4$, $L_2\subseteq L_4$,
$U_3\subseteq U_1$, and $U_4\subseteq U_1\cap U_2$.

Note that $b_1$ and $b_2$ have no middle below them, so
$\hat{U}_{b_1}=L_1$ and $\hat{U}_{b_2}=L_2$, whence minimality
gives $\beta_1,\beta_2\geq 2$. Dually, $b_3$ and $b_4$ have no
middle above them, so $\alpha_3,\alpha_4\geq 2$. The other strict
up/down sets satisfy $|\hat{U}_{b_3}|=\beta_3+1$,
$|\hat{U}_{b_4}|=\beta_4+2$, $|\hat{F}_{b_1}|=\alpha_1+2$, and
$|\hat{F}_{b_2}|=\alpha_2+1$.

\smallskip\noindent\emph{Step~F-1: forced parameters.}
Lemma~\ref{lem:L31} applied to $b_1<b_3$ gives
$\beta_3+1\geq\beta_1+2$, so $\beta_3\geq\beta_1+1$. Combined with
$\beta_1\geq 2$ and $\beta_3\leq m=3$, this forces
$\beta_1=2$ and $\beta_3=3$, hence $L_3=C$. Dually,
Lemma~\ref{lem:L31} applied to $b_2<b_4$ gives
$\alpha_2+1\geq\alpha_4+2$, so $\alpha_2\geq\alpha_4+1$.
Combined with $\alpha_4\geq 2$ and $\alpha_2\leq n=3$, this forces
$\alpha_2=3$ and $\alpha_4=2$, hence $U_2=A$.

The remaining Lemma~\ref{lem:L31} applications to covers in $E_B$
yield only the weak bounds $\beta_4\geq\beta_2$ (from $b_2<b_4$,
using $\beta_1=2$) and $\alpha_1\geq\alpha_3$ (from $b_1<b_3$).
Consequently $\beta_2,\beta_4\in\{2,3\}$ and
$\alpha_1,\alpha_3\in\{2,3\}$, with $\beta_2\leq\beta_4$ and
$\alpha_3\leq\alpha_1$.

\smallskip\noindent\emph{Step~F-2: $L_1\subseteq L_2$.}
Suppose for contradiction that some $c\in L_1\setminus L_2$ exists.
Then $c\notin L_2$ implies $b_2\notin\hat{F}_c$. Since
$L_1\subseteq L_3$ (as $L_3=C$) and $L_1\subseteq L_4$ (transitivity),
the middles in $\hat{F}_c$ are exactly $\{b_1,b_3,b_4\}$. The
maximals reached transitively from $c$ are
$M(c)=U_1\cup U_3\cup U_4=U_1$ (using $U_3,U_4\subseteq U_1$).

If no naked edge $\{c,a^*\}$ with $a^*\in A\setminus U_1$ exists,
then $\hat{F}_c=\{b_1,b_3,b_4\}\cup U_1$. Every element is $\geq b_1$
($b_1<b_3$, $b_1<b_4$, and $a\in U_1$ gives $b_1<a$), so
$b_1=\min(\hat{F}_c)$: $c$ is an up-beat point, contradicting
minimality.

Otherwise some $\{c,a^*\}$ is naked with $a^*\in A\setminus U_1$.
Pick $c_0\in L_2$ (non-empty since $\beta_2\geq 2$) and $a_r\in U_1$.
Since $a^*\in A=U_2$, $\{c_0,a^*\}$ is transitive via $b_2$;
since $a_r\in A=U_2$, $\{c_0,a_r\}$ is transitive via $b_2$;
since $c\in L_1$ and $a_r\in U_1$, $\{c,a_r\}$ is transitive via
$b_1$. The cycle
$Z=[c,a^*]-[c_0,a^*]+[c_0,a_r]-[c,a_r]$
satisfies $\partial Z=0$, contains the naked edge $\{c,a^*\}$,
and the other three edges are transitive. By Lemma~\ref{lem:naked},
$[Z]\neq 0$: contradiction.

Hence $L_1\subseteq L_2$, so $\beta_1\leq\beta_2$.

\smallskip\noindent\emph{Step~F-3: parameter enumeration.}
With $\beta_1=2$, $\beta_3=3$, $\alpha_2=3$, $\alpha_4=2$ forced,
the remaining incidence data splits along two axes: the
$U$-side configuration $(\alpha_1,\alpha_3,|U_3\cap U_4|)$ and the
$L$-side configuration $(\beta_2,\beta_4)$ (with $\beta_1=2$ fixed).
On the $U$-side, the constraints $\alpha_3\leq\alpha_1$,
$\alpha_4=2$, $|U_3|,|U_4|\geq 2$, and $U_3\cup U_4\subseteq A$
yield exactly four sub-cases:
\begin{enumerate}[label=\textup{(\arabic*)}]
  \item $\alpha_1=\alpha_3=2$, $|U_3\cap U_4|=2$ (so $U_3=U_4$);
  \item $\alpha_1=3$, $\alpha_3=2$, $|U_3\cap U_4|=2$ ($U_3=U_4$);
  \item $\alpha_1=3$, $\alpha_3=2$, $|U_3\cap U_4|=1$
    ($U_3\cup U_4=A$);
  \item $\alpha_1=3$, $\alpha_3=3$, $|U_3\cap U_4|=2$
    (so $U_3=A$ and $U_4\subset U_3$ with $|U_4|=2$).
\end{enumerate}
On the $L$-side, the constraints $\beta_1\leq\beta_2\leq\beta_4$
and $\beta_j\in\{2,3\}$ yield exactly three sub-cases:
\begin{enumerate}[label=\textup{(\alph*)}]
  \item $\beta_2=\beta_4=2$ (so $L_2=L_4=L_1$);
  \item $\beta_2=2$, $\beta_4=3$ (so $L_2=L_1$, $L_4=C$);
  \item $\beta_2=\beta_4=3$ (so $L_2=L_4=C$).
\end{enumerate}
This yields $4\times 3=12$ combinations $(N\mathrm{x})$ for
$N\in\{1,2,3,4\}$ and $\mathrm{x}\in\{a,b,c\}$.

From Lemma~\ref{lem:euler3} with $E_B=\{(1,3),(1,4),(2,4)\}$ and
the forced parameters,
\[
C_{ca}=\alpha_1+\beta_2+\alpha_3+\beta_4.
\]
We tabulate $C_{ca}$ for each combination in
Table~\ref{tab:caseF}; the constraint $C_{ca}\leq mn=9$ leaves
exactly four budget-feasible combinations: $(1a)$, $(1b)$, $(2a)$,
$(3a)$ (each of which is then eliminated in Step~F-4 below).

\begin{table}[ht]
\centering\footnotesize
\begin{tabular}{c|ccc|cc|c|c}
\toprule
\# & $\alpha_1$ & $\alpha_3$ & $|U_3{\cap}U_4|$ & $\beta_2$ & $\beta_4$
   & $C_{ca}$ & Budget status; final outcome\\
\midrule
1a & 2 & 2 & 2 & 2 & 2 & 8 & passes budget ($D=0$); eliminated by down-beat $a^\dagger$\\
1b & 2 & 2 & 2 & 2 & 3 & 9 & passes budget ($D=1$); eliminated by cycle via $b_3$\\
1c & 2 & 2 & 2 & 3 & 3 & 10 & excluded by budget ($C_{ca}>mn$)\\
2a & 3 & 2 & 2 & 2 & 2 & 9 & passes budget ($D=1$); eliminated by cycle via $b_3$\\
2b & 3 & 2 & 2 & 2 & 3 & 10 & excluded by budget\\
2c & 3 & 2 & 2 & 3 & 3 & 11 & excluded by budget\\
3a & 3 & 2 & 1 & 2 & 2 & 9 & passes budget ($D=1$); eliminated by cycle via $b_3$\\
3b & 3 & 2 & 1 & 2 & 3 & 10 & excluded by budget\\
3c & 3 & 2 & 1 & 3 & 3 & 11 & excluded by budget\\
4a & 3 & 3 & 2 & 2 & 2 & 10 & excluded by budget\\
4b & 3 & 3 & 2 & 2 & 3 & 11 & excluded by budget\\
4c & 3 & 3 & 2 & 3 & 3 & 12 & excluded by budget\\
\bottomrule
\end{tabular}
\caption{The twelve combinations $(N\mathrm{x})$ in Case~F. Rows
$1,2,3,4$ index the $U$-side incidence: row~$1$ has $\alpha_1=\alpha_3=2$
with $U_3=U_4$; row~$2$ has $\alpha_1=3$, $\alpha_3=2$ with $U_3=U_4$
(so $|U_3\cap U_4|=2$); row~$3$ has $\alpha_1=3$, $\alpha_3=2$ with
$|U_3\cap U_4|=1$ (so $U_3\cup U_4=A$); row~$4$ has $\alpha_1=\alpha_3=3$
with $|U_3\cap U_4|=2$. The middle column $|U_3\cap U_4|$
distinguishes rows~$2$ and~$3$ (which agree on all other displayed
parameters). Columns $\mathrm{a},\mathrm{b},\mathrm{c}$ index the
$L$-side $(\beta_2,\beta_4)$. We use ``passes budget'' to mean
$C_{ca}\leq mn=9$; this is necessary but not sufficient for
realizability. The four budget-feasible combinations $(1a)$, $(1b)$,
$(2a)$, $(3a)$ are then each eliminated in Step~F-4 by either a
beat-point argument or an explicit cycle. The remaining eight
combinations are excluded already at the budget level.}
\label{tab:caseF}
\end{table}

\smallskip\noindent\emph{Step~F-4: Elimination of the four budget-feasible combinations.}

\textit{(i) Down-beat point at $a^\dagger$ (combination $(1a)$;
$\alpha_1=2$, $D=0$).} When $\alpha_1=2$, there exists a unique
$a^\dagger\in A\setminus U_1$. Since $U_3,U_4\subseteq U_1$, we
have $a^\dagger\notin U_3\cup U_4$, so the only middle below
$a^\dagger$ is $b_2$ (using $a^\dagger\in U_2=A$), giving
$s(a^\dagger)=1$. The strict downset $\hat{U}_{a^\dagger}$
contains $b_2$ and all $c\in C$ with $c<a^\dagger$. A
comparability $c<a^\dagger$ is either transitive (forcing $c\in
L_2$ via the unique middle $b_2$) or naked. Since $D=0$, no
naked edges exist, hence $\hat{U}_{a^\dagger}=\{b_2\}\cup L_2$.
Every $c\in L_2$ satisfies $c<b_2$, so $b_2=\max(\hat{U}_{a^\dagger})$
and $a^\dagger$ is a down-beat point, contradicting minimality.

\textit{(ii) Naked-edge cycle via $b_3$ (combinations $(1b)$,
$(2a)$, $(3a)$; all with $D=1$).}

\emph{Combination $(1b)$: $(\alpha_1,\alpha_3,|U_3\cap U_4|,
\beta_2,\beta_4)=(2,2,2,2,3)$.} Here $\alpha_1=2$ gives the
unique $a^\dagger\in A\setminus U_1$; since $U_3=U_4\subseteq U_1$
both have size~$2=|U_1|$, equality forces $U_3=U_4=U_1$ and
$a^\dagger\notin U_3\cup U_4$. So $s(a^\dagger)=1$ with $b_2$ the
unique middle below. By Lemma~\ref{lem:beat}, the unique element
$c^*\in C\setminus L_2$ satisfies $c^*<a^\dagger$ as a naked edge.
Now $\beta_4=3$ gives $L_4=C$, so $c^*\in L_4$. Pick $c_0\in L_2$
and $a_r\in U_3=U_1$. Then $\{c^*,a^\dagger\}$ is naked;
$\{c_0,a^\dagger\}$ transitive via $b_2$; $\{c_0,a_r\}$ transitive
via $b_1$ ($c_0\in L_1=L_2$); $\{c^*,a_r\}$ transitive via $b_3$
(using $L_3=C$). The cycle
$Z=[c^*,a^\dagger]-[c_0,a^\dagger]+[c_0,a_r]-[c^*,a_r]$ has
$\partial Z=0$. By Lemma~\ref{lem:naked}, $[Z]\neq 0$:
contradiction.

\emph{Combinations $(2a)$ and $(3a)$:
$(\alpha_1,\alpha_3,\beta_2,\beta_4)=(3,2,2,2)$ with
$|U_3\cap U_4|\in\{2,1\}$.} Here $\alpha_1=3$ gives $U_1=A$;
$\beta_2=\beta_4=2$ together with $L_1\subseteq L_2\subseteq L_4$
forces $L_2=L_4=L_1$ of size~$2$. Let $c^*$ denote the unique
element of $C\setminus L_1$; then $c^*\in L_3$ only. We have
$M(c^*)=U_3$ (size~$2$), so the unique naked edge has the form
$\{c^*,a^\dagger\}$ for some $a^\dagger\in A\setminus U_3$
(regardless of whether $a^\dagger\in U_4$, since $c^*\notin L_4$
rules out a transitive path through $b_4$). Pick $c_0\in L_1$ and
$a_r\in U_3$. Then $\{c^*,a^\dagger\}$ is naked;
$\{c_0,a^\dagger\}$ transitive via $b_2$ ($a^\dagger\in U_2=A$);
$\{c_0,a_r\}$ transitive via $b_3$ (using $L_3=C\ni c_0$);
$\{c^*,a_r\}$ transitive via $b_3$. The cycle
$Z=[c^*,a^\dagger]-[c_0,a^\dagger]+[c_0,a_r]-[c^*,a_r]$ has
$\partial Z=0$. By Lemma~\ref{lem:naked}, $[Z]\neq 0$:
contradiction. The argument applies uniformly to both $(2a)$ and
$(3a)$.

All four budget-feasible combinations yield a contradiction, so
Case~F is impossible.

\noindent\textsc{Case G: $E_B=\{(1,3),(1,4),(2,3),(2,4)\}$.}

The elimination proceeds in four steps. Step~G-0 records the
structural bounds from transitivity. Steps~G-1 and
G-2 use beat-point arguments to force $U_3=U_4$ and
$L_1=L_2$. Step~G-3 exhibits a nonzero class in
$H_3(\mathcal K(X);\mathbb Z)$.

\noindent\textit{Step~G-0: Transitivity constraints.}
From $b_1<b_3$ in $X$ we obtain $L_1\subseteq L_3$. The same
argument applied to the other three covers yields
$L_3,\,L_4\;\supseteq\;L_1\cup L_2$, \quad
$U_1,\,U_2\;\supseteq\;U_3\cup U_4$.

\begin{remark}\label{rem:caseG-L31}
Unlike the other cases, Lemma~\ref{lem:L31} applied
to a cover $b_i<b_k\in E_B$ does \emph{not} yield the sharper bound
$\beta_k\geq\beta_i+1$: the set $\hat U_{b_k}=L_k\cup\{b_1,b_2\}$
already contains \emph{both} lower middles, so the
``strict-inequality'' budget is consumed by
$b_{3-i}$ rather than by a further element of $L_k$.
\end{remark}

\noindent\textit{Step~G-1: $U_3=U_4$.}
Suppose for contradiction that there exists $a\in U_3\setminus U_4$.
The hypothesis $U_3\neq U_4$, combined with $|U_3|,|U_4|\geq 2$ and
$|A|=3$, forces $U_3\cup U_4=A$; by Step~G-0 this gives
$U_1,U_2\supseteq A$, hence
\begin{equation*}
U_1=U_2=A.
\end{equation*}

We analyse $\hat U_a$. The middles below $a$ are those $b_j$ with
$a\in U_j$, namely $j\in\{1,2,3\}$ (and $b_4\notin\hat U_a$ since
$a\notin U_4$), so $\{b_1,b_2,b_3\}\subseteq\hat U_a$. For
$c\in\hat U_a\cap C$ we distinguish transitive and naked predecessors.

\emph{Transitive predecessors lie below $b_3$.} If $(c,a)$ is
transitive via some $b_j$, then $j\in\{1,2,3\}$ (as $b_4\not<a$),
so $c\in L_1\cup L_2\cup L_3=L_3$ by Step~G-0, which gives $c<b_3$.

\emph{$a$ has no naked predecessors.} Suppose, for contradiction,
that $(c^*,a)$ is naked for some $c^*\in C$. Then $c^*\notin
L_1\cup L_2\cup L_3=L_3$, so either (B.1) $c^*\in L_4\setminus L_3$
or (B.2) $c^*$ lies in no~$L_j$.

\quad\emph{Sub-case~(B.1): $c^*\in L_4\setminus L_3$.}
Pick any $a_r\in U_4$ (non-empty since $\alpha_4\geq 2$) and any
$c_0\in L_1$ (non-empty since $\beta_1\geq 2$); by Step~G-0,
$c_0\in L_3\cap L_4$. Then
$(c^*,a_r)$ is transitive via $b_4$,
$(c_0,a_r)$ is transitive via $b_1$ ($a_r\in U_1=A$),
$(c_0,a)$ is transitive via $b_1$ ($a\in U_1=A$).
Form $Z=[c^*,a]-[c_0,a]+[c_0,a_r]-[c^*,a_r]$.
Then $\partial Z=0$; $\{c^*,a\}$ is naked, the other three transitive.
By Lemma~\ref{lem:naked}, $[Z]\neq 0$: contradiction.
\quad\emph{Sub-case~(B.2): $c^*$ lies in no $L_j$.}
Then $\hat F_{c^*}\subseteq A$. By minimality $\hat F_{c^*}$ has no
minimum and $|\hat F_{c^*}|\geq 2$. Every $a'\in\hat F_{c^*}$ yields
a naked edge $(c^*,a')$. Pick $a'\neq a$ in $\hat F_{c^*}$. Since
$U_1=A$, pick $c_0\in L_1$ with $c_0\neq c^*$. Then $(c_0,a)$ and
$(c_0,a')$ are both transitive via $b_1$.
Form $Z=[c^*,a]-[c_0,a]+[c_0,a']-[c^*,a']$.
Then $\partial Z=0$, both $\{c^*,a\}$ and $\{c^*,a'\}$ naked.
By Lemma~\ref{lem:naked}, $[Z]\neq 0$: contradiction.

\smallskip
Hence $a$ has no naked predecessors, and every $c\in\hat U_a\cap C$
satisfies $c<b_3$. Combined with $b_1,b_2<b_3$, this gives
$b_3=\max(\hat U_a)$, so $a$ is a down-beat point: contradiction.
Therefore $U_3\setminus U_4=\emptyset$, and by symmetry
$U_4\setminus U_3=\emptyset$, so $U_3=U_4$.

\noindent\textit{Step~G-2: $L_1=L_2$.}
The cover structure $E_B=\{(1,3),(1,4),(2,3),(2,4)\}$ of Case~G is
self-dual: in $X^{\mathrm{op}}$, the role of $\{b_1,b_2\}$ (the
minimal middles) is swapped with $\{b_3,b_4\}$ (the maximal middles)
under the relabelling $b_1\leftrightarrow b_3$, $b_2\leftrightarrow b_4$,
which preserves $E_B$ as a set of cover relations and exchanges
$L_j\leftrightarrow U_{\sigma(j)}$ accordingly. Applying Step~G-1
to $X^{\mathrm{op}}$ under this relabelling yields the dual
conclusion: $L_1=L_2$.

\noindent\textit{Step~G-3: Non-vanishing of $H_3$.}
By Steps~G-1 and~G-2, we have $L_1=L_2=:L_*$ and $U_3=U_4=:U_*$,
with $|L_*|\geq 2$ and $|U_*|\geq 2$ by minimality (so
$(|L_*|,|U_*|)\in\{2,3\}\times\{2,3\}$). Every $c\in L_*$ and
$a\in U_*$ satisfies $c<b_j<a$ for all four $j$, and the
tetrahedra $[c,b_i,b_j,a]$ for $(i,j)\in E_B$ are all $3$-simplices
of $\mathcal K(X)$. By the Claim below, which applies uniformly to
all four degree configurations,
$\operatorname{rank} H_3(\mathcal K(X);\mathbb Z)\geq 1$,
contradicting homotopical triviality.

\noindent\textit{Claim.} Let $X$ be a finite $T_0$-space satisfying
the hypotheses of Case~G together with the conclusions of Steps~G-0,
G-1, and G-2. Then $\operatorname{rank} H_3(\mathcal K(X);\mathbb Z)\geq 1$.

\noindent\textit{Proof of Claim.}
Write $L_*:=L_1=L_2$ and $U_*:=U_3=U_4$, and fix distinct
$c,c_0\in L_*$ and distinct $a,a_0\in U_*$ (possible since
$|L_*|,|U_*|\geq 2$). For $c'\in L_*$ and $a'\in U_*$,
define the $3$-chain
\[
Z(c',a'):=[c',b_1,b_3,a']-[c',b_1,b_4,a']-[c',b_2,b_3,a']+[c',b_2,b_4,a'].
\]
We compute $\partial Z(c',a')=W_{a'}-V_{c'}$,
where $W_{a'}:=[b_1,b_3,a']-[b_1,b_4,a']-[b_2,b_3,a']+[b_2,b_4,a']$
and $V_{c'}:=[c',b_1,b_3]-[c',b_1,b_4]-[c',b_2,b_3]+[c',b_2,b_4]$.
Crucially, $W_{a'}$ depends only on $a'$ and $V_{c'}$ only on $c'$.
Set $Y:=Z(c,a)-Z(c_0,a)-Z(c,a_0)+Z(c_0,a_0)$.
Then $\partial Y=(W_a-V_c)-(W_a-V_{c_0})-(W_{a_0}-V_c)+(W_{a_0}-V_{c_0})=0$,
so $Y\in\ker\partial_3$. Moreover $Y$ is a $\mathbb Z$-linear
combination of the $16$ distinct tetrahedra $[c',b_i,b_j,a']$
indexed by $(c',a',i,j)\in\{c,c_0\}\times\{a,a_0\}\times\{1,2\}\times\{3,4\}$
(distinct because they differ in at least one coordinate), each
appearing with coefficient $\pm 1$, so $Y\neq 0$. Since
$h(X)=3$ implies $\dim\mathcal K(X)=3$ and $\partial_4=0$,
$H_3=\ker\partial_3$ and $[Y]\neq 0$. The argument is uniform in
$|L_*|,|U_*|\in\{2,3\}$ since the choice of $c,c_0,a,a_0$ requires
only $|L_*|,|U_*|\geq 2$. \hfill$\square$

\noindent\textsc{Case H: $E_B=\{(1,2),(1,3),(1,4)\}$.}
Lemma~\ref{lem:L31} for each cover $b_1<b_k$ ($k=2,3,4$) gives
$L_k=C$. For any $c\in L_1$, $b_1=\min(\hat{F}_c)$: up-beat point;
$\beta_1\geq 2$ gives a contradiction.

\noindent\textsc{Case I: $E_B=\{(1,4),(2,4),(3,4)\}$.}
The order-dual of Case~H. The dual of Lemma~\ref{lem:L31} for each
$b_k<b_4$ gives $U_k=A$. For any $a\in U_4$,
$b_4=\max(\hat{U}_a)$: down-beat point; $\alpha_4\geq 2$ gives
a contradiction.

\noindent\textsc{Case J/K:} $b_1<b_2$ (or $b_2<b_1$), $b_3$ and $b_4$ isolated.
These two cases denote the same unlabeled poset (the self-dual
single-cover class), differing only by the bijection
$b_1\leftrightarrow b_2$; we treat them together.

Without loss of generality assume $b_1<b_2$.
Lemma~\ref{lem:L31} for $b_1<b_2$ gives $\beta_2\geq\beta_1+1\geq 3$,
hence $L_2=C$ and $\beta_2=3$. The dual gives
$\alpha_1\geq\alpha_2+1$. Applying Lemma~\ref{lem:euler3}
with $E_B=\{(1,2)\}$:
\[
\text{LHS}
=(\beta_1{-}1)(\alpha_1{-}\alpha_2)
+2(\alpha_2{-}1)
+(\beta_3{-}1)(\alpha_3{-}1)
+(\beta_4{-}1)(\alpha_4{-}1)
\geq 1{+}2{+}1{+}1=5,
\]
so $C_{ca}\geq 5+(m+n-1)=10>9=mn$: contradiction.

Hence $B$ is an antichain and $h(X)=2$.

\smallskip\noindent\textbf{Step~2.}
\emph{Degree determination.}

With $B$ an antichain and $m=n=3$, setting $\chi=1$ in~\eqref{eq:1}
gives $\sum_{j=1}^4(\beta_j-1)(\alpha_j-1)=C_{ca}-5$. Since
$\beta_j,\alpha_j\geq 2$, each term is $\geq 1$ and the sum is
$\geq 4$. Since $C_{ca}\leq mn=9$, the sum is $\leq 4$, so each
term equals exactly one, giving $\beta_j=\alpha_j=2$ for all $j$
and $C_{ca}=9=mn$.

\smallskip\noindent\textbf{Step~3.}
\emph{Incidence patterns.}

Each $b_j$ covers exactly two elements of $A$, so the $B$--$A$
incidence is a $4\times 3$ $(0,1)$-matrix with all row sums equal
to~$2$. Let $s(a_i)$ denote the column sum of $a_i$. Then
$\sum_{i=1}^3 s(a_i)=8$, $0\le s(a_i)\le 4$.
Up to permutation assume $s(a_1)\geq s(a_2)\geq s(a_3)$.
The four possibilities are:
$(4,4,0)$, $(4,3,1)$, $(4,2,2)$, $(3,3,2)$.

\noindent\textbf{Case $(4,4,0)$.}
$s(a_3)=0$: every edge to $a_3$ is naked.
Lemma~\ref{lem:iso} gives $c_i,c_j<a_3$ naked.
Since $C_{ca}=9$, pick $a'\in\{a_1,a_2\}$: $c_i,c_j<a'$.
Form $Z=[c_i,a_3]-[c_j,a_3]+[c_j,a']-[c_i,a']$:
$\{c_i,a_3\}$ in no triangle ($s(a_3)=0$); $[Z]\neq 0$. Contradiction.

\noindent\textbf{Case $(4,3,1)$.}
$s(a_3)=1$; exactly one $b_j$, say $b_4$, below $a_3$.
$c_3\in C\setminus L_4$ gives $c_3<a_3$ naked.
Pick $c_1\in L_4$ and $a'\in\{a_1,a_2\}$; $c_1,c_3<a'$ (since $C_{ca}=9$).
Form $Z=[c_3,a_3]-[c_1,a_3]+[c_1,a']-[c_3,a']$:
$\{c_3,a_3\}$ in no triangle; $[Z]\neq 0$. Contradiction.

Therefore only the column-sum patterns $(4,2,2)$ and $(3,3,2)$ are
possible. We label them $\mathsf{P}$ and $\mathsf{Q}$,
and their lower duals $\mathsf{P}'$ and $\mathsf{Q}'$.

\smallskip\noindent\textbf{Step~4.}
\emph{$D=0$ and uniqueness within each combination.}

Since $C_{ca}=9=mn$, every $(c,a)$ is comparable. If some $(c,a^*)$
were naked, pick $c'\neq c$, $a'\neq a^*$; the comparabilities
$c<a'$, $c'<a^*$, $c'<a'$ all hold, and Lemma~\ref{lem:template}
at $(c,c',a^*,a')$ gives $[Z]\neq 0$. Hence $D=0$,
$T_{\mathrm{tr}}=9$, and $M(c)=A$ for every $c\in C$.

\smallskip
\noindent\textit{Combination $(\mathsf{P},\mathsf{P}')$.}
Aligned: $L_1=L_2=\{c_1,c_3\}$, $L_3=L_4=\{c_2,c_3\}$;
$M(c_1)=U_1\cup U_2=\{a_1,a_2\}\neq A$: excluded.
Crossed: $L_1=L_3=\{c_1,c_3\}$, $L_2=L_4=\{c_2,c_3\}$;
$M(c)=A$ for all $c$. Type~IV.

\smallskip
\noindent\textit{Combination $(\mathsf{P},\mathsf{Q}')$.}
Aligned: $M(c_3)=U_3\cup U_4=\{a_1,a_3\}\neq A$: excluded.
Crossed: $M(c)=A$ for all $c$. Type~V.

\smallskip
\noindent\textit{Combination $(\mathsf{Q},\mathsf{P}')$.}
The configuration here is obtained from Combination
$(\mathsf{P},\mathsf{Q}')$ by exchanging the roles of
$(\mathsf{P},\mathsf{P}')$ and $(\mathsf{Q},\mathsf{Q}')$,
equivalently by applying the order-dual involution
$X\mapsto X^{\mathrm{op}}$ (which exchanges $C\leftrightarrow A$ and
$L_j\leftrightarrow U_j$ for each $j$). Since $X\mapsto X^{\mathrm{op}}$
preserves minimality, homotopical triviality (because
$\mathcal{K}(X^{\mathrm{op}})=\mathcal{K}(X)$), and
non-contractibility (Stong's core commutes with duality), the
elimination of the aligned matching and the analysis of the crossed
matching transfer verbatim. The crossed matching gives $M(c)=A$ for
all $c$ and yields the surviving configuration
$\mathrm{V}^{\mathrm{op}}=$~Type~VI.

\smallskip
\noindent\textit{Combination $(\mathsf{Q},\mathsf{Q}')$.}
Three inequivalent matchings arise, distinguished by the
assignment of $(L_j,U_j)$ to the middles $b_j$:
\begin{enumerate}[label=\textup{(\alph*)}]
  \item some pair of distinct middles $b_i,b_j$ with $i\neq j$
        share their lower and upper sets ($L_i=L_j$ and $U_i=U_j$);
  \item the assignment yields $M(c_3)\neq A$, violating the
        constraint $D=0$;
  \item all middles have distinct $(L_j,U_j)$ data and $M(c)=A$
        for every $c\in C$.
\end{enumerate}

\textit{Case (b)}: excluded immediately by $M(c_3)\neq A$.

\textit{Case (a)}: relabelling so $b_1$ and $b_2$ are the
identical pair, the full incidence is
\[
U_1=U_2=\{a_1,a_2\},\ U_3=\{a_1,a_3\},\ U_4=\{a_2,a_3\},
\]
\[
L_1=L_2=\{c_1,c_2\},\ L_3=\{c_1,c_3\},\ L_4=\{c_2,c_3\}.
\]
We show this configuration is not homotopically trivial by
exhibiting a non-bounding $1$-cycle in $\mathcal{K}(X)$.

Consider the $1$-chain
\[
Z\;=\;[c_1,b_3]-[c_3,b_3]+[c_3,b_4]-[c_2,b_4]
+[c_2,b_1]-[c_1,b_1]\in C_1(\mathcal{K}(X);\mathbb{Z}).
\]
Each edge is present in $\mathcal{K}(X)$: $c_1,c_3\in L_3$;
$c_2,c_3\in L_4$; $c_1,c_2\in L_1$. Direct expansion gives
$\partial Z=0$, so $Z$ is a $1$-cycle.

We claim $Z\notin\mathrm{im}\,\partial_2$, hence $[Z]\neq 0$ in
$H_1(\mathcal{K}(X);\mathbb{Z})$. Suppose for contradiction that
$Z=\partial_2 W$ for some $W=\sum n_{cba}(c,b,a)\in C_2$, where
the sum runs over triangles $(c,b,a)$ with $c\in L_b$, $a\in U_b$.
The boundary formula
$\partial_2(c,b,a)=[b,a]-[c,a]+[c,b]$
gives, for each edge $e\in C_1$, the equation:
coefficient of $e$ in $Z$ equals coefficient of $e$ in $\partial_2 W$.
We extract four such equations and derive a contradiction.

For each $(c,a)\in C\times A$, write
$M(c,a):=\{b\in B:c\in L_b,\,a\in U_b\}$
(the middles witnessing a triangle $(c,b,a)$). The coefficient of
$[c,a]$ in $\partial_2 W$ is $-\sum_{b\in M(c,a)}n_{cba}$, and the
coefficient of $[c,b]$ in $\partial_2 W$ (for $c\in L_b$) is
$+\sum_{a\in U_b}n_{cba}$.

\textit{(i) At $[c_1,a_3]$:} from the displayed incidence,
$M(c_1,a_3)=\{b_3\}$ (only $b_3$ has both $c_1\in L_3$ and
$a_3\in U_3$; the other middles fail since $a_3\notin U_1=U_2$
and $c_1\notin L_4$). The coefficient of $[c_1,a_3]$ in $Z$ is $0$,
so $-n_{c_1,b_3,a_3}=0$, giving $n_{c_1,b_3,a_3}=0$.

\textit{(ii) At $[c_1,b_3]$:} the coefficient in $Z$ is $+1$, and
in $\partial_2 W$ it is $\sum_{a\in U_3}n_{c_1,b_3,a}=n_{c_1,b_3,a_1}+n_{c_1,b_3,a_3}$.
Combined with~(i), $n_{c_1,b_3,a_1}=1$.

\textit{(iii) At $[c_3,a_1]$:} similarly, $M(c_3,a_1)=\{b_3\}$
($a_1\in U_3$ and $c_3\in L_3$; other middles fail since
$c_3\notin L_1=L_2$ and $a_1\notin U_4$). The coefficient in $Z$
is $0$, giving $n_{c_3,b_3,a_1}=0$.

\textit{(iv) At $[b_3,a_1]$:} the coefficient in $Z$ is $0$, and
in $\partial_2 W$ it is $\sum_{c\in L_3}n_{c,b_3,a_1}=n_{c_1,b_3,a_1}+n_{c_3,b_3,a_1}$.
Combining (ii) and~(iii):
$n_{c_1,b_3,a_1}+n_{c_3,b_3,a_1}=1+0=1\neq 0$.

This contradicts equation~(iv), so $Z$ is not a boundary; hence
$[Z]\neq 0$ and Case~(a) is not homotopically trivial.

\textit{Case (c)}: $M(c)=A$ for every $c\in C$ and no beat points
exist. \textbf{Type~VII}.

The four combinations produce exactly Types~IV, V, VI, VII.

\smallskip\noindent\textbf{Step~5.}
\emph{The four surviving types.}

Each type has $C_{ca}=9$ and sixteen triangles in $\mathcal{K}(X)$.
The collapse sequences below follow the same convention introduced
for Type~I in Section~\ref{sec:bx3}: $(\sigma)\searrow\{\tau\}$
denotes removal of the triangle $\sigma=\{x,y,z\}$ via its free edge
$\tau\subset\sigma$ (a $1$-simplex contained in no other triangle
remaining at that step). Each step can be independently verified
against the explicit list of $16$ triangles read off from the
incidence relations stated immediately before each sequence;
in particular, the freeness of $\tau$ at the indicated step is a
finite check on the running triangle set, and the resulting
$9$-edge simplicial complex is the spanning tree of $\mathcal{K}(X)$
on the ten vertices.

\smallskip\noindent\textbf{Type~IV}
(upper $\mathsf{P}$, lower $\mathsf{P}'$):
\[
c_1<\{b_1,b_3\},\quad c_2<\{b_2,b_4\},\quad
c_3<\{b_1,b_2,b_3,b_4\},\quad
b_1,b_2<\{a_1,a_2\},\quad b_3,b_4<\{a_1,a_3\}.
\]

\begin{center}\begin{tikzpicture}[scale=0.63]\node (c1) at (0,0){$c_1$};\node (c2) at (2,0){$c_2$};\node (c3) at (4,0){$c_3$};\node (b1) at (0,2){$b_1$};\node (b2) at (2,2){$b_2$};\node (b3) at (4,2){$b_3$};\node (b4) at (6,2){$b_4$};\node (a1) at (0,4){$a_1$};\node (a2) at (2,4){$a_2$};\node (a3) at (4,4){$a_3$};\draw (c1)--(b1);\draw (c1)--(b3);\draw (c2)--(b2);\draw (c2)--(b4);\draw (c3)--(b1);\draw (c3)--(b2);\draw (c3)--(b3);\draw (c3)--(b4);\draw (b1)--(a1);\draw (b1)--(a2);\draw (b2)--(a1);\draw (b2)--(a2);\draw (b3)--(a1);\draw (b3)--(a3);\draw (b4)--(a1);\draw (b4)--(a3);\end{tikzpicture}\end{center}

\noindent\textbf{Collapse sequence for Type~IV.}
{\small\sloppy $(c_1,b_1,a_2)\searrow\{c_1,a_2\}$;\allowbreak
$(c_1,b_3,a_3)\searrow\{c_1,a_3\}$;\allowbreak
$(c_2,b_2,a_2)\searrow\{c_2,a_2\}$;\allowbreak
$(c_2,b_4,a_3)\searrow\{c_2,a_3\}$;\allowbreak
$(c_3,b_4,a_3)\searrow\{b_4,a_3\}$;\allowbreak
$(c_3,b_3,a_3)\searrow\{b_3,a_3\}$;\allowbreak
$(c_3,b_1,a_2)\searrow\{b_1,a_2\}$;\allowbreak
$(c_3,b_2,a_2)\searrow\{b_2,a_2\}$;\allowbreak
$(c_1,b_1,a_1)\searrow\{c_1,b_1\}$;\allowbreak
$(c_3,b_1,a_1)\searrow\{b_1,a_1\}$;\allowbreak
$(c_2,b_2,a_1)\searrow\{c_2,b_2\}$;\allowbreak
$(c_3,b_2,a_1)\searrow\{b_2,a_1\}$;\allowbreak
$(c_1,b_3,a_1)\searrow\{c_1,b_3\}$;\allowbreak
$(c_3,b_3,a_1)\searrow\{b_3,a_1\}$;\allowbreak
$(c_2,b_4,a_1)\searrow\{c_2,b_4\}$;\allowbreak
$(c_3,b_4,a_1)\searrow\{b_4,a_1\}$.\par}
Remaining spanning tree: $\{c_3,b_1\}$, $\{c_3,b_2\}$, $\{c_3,b_3\}$, $\{c_3,b_4\}$, $\{c_1,a_1\}$, $\{c_2,a_1\}$, $\{c_3,a_1\}$, $\{c_3,a_2\}$, $\{c_3,a_3\}$.

\smallskip\noindent\textbf{Type~V}
(upper $\mathsf{P}$, lower $\mathsf{Q}'$):
\[
c_1<\{b_1,b_3\},\quad c_2<\{b_1,b_2,b_4\},\quad
c_3<\{b_2,b_3,b_4\},\quad
b_1,b_2<\{a_1,a_2\},\quad b_3,b_4<\{a_1,a_3\}.
\]

\begin{center}\begin{tikzpicture}[scale=0.63]\node (c1) at (0,0){$c_1$};\node (c2) at (3,0){$c_2$};\node (c3) at (6,0){$c_3$};\node (b1) at (1,2){$b_1$};\node (b2) at (3,2){$b_2$};\node (b3) at (5,2){$b_3$};\node (b4) at (7,2){$b_4$};\node (a1) at (1,4){$a_1$};\node (a2) at (4,4){$a_2$};\node (a3) at (7,4){$a_3$};\draw (c1)--(b1);\draw (c1)--(b3);\draw (c2)--(b1);\draw (c2)--(b2);\draw (c2)--(b4);\draw (c3)--(b2);\draw (c3)--(b3);\draw (c3)--(b4);\draw (b1)--(a1);\draw (b1)--(a2);\draw (b2)--(a1);\draw (b2)--(a2);\draw (b3)--(a1);\draw (b3)--(a3);\draw (b4)--(a1);\draw (b4)--(a3);\end{tikzpicture}\end{center}

\noindent\textbf{Collapse sequence for Type~V.}
{\small\sloppy $(c_1,b_3,a_3)\searrow\{c_1,a_3\}$;\allowbreak
$(c_1,b_3,a_1)\searrow\{c_1,b_3\}$;\allowbreak
$(c_1,b_1,a_2)\searrow\{c_1,a_2\}$;\allowbreak
$(c_1,b_1,a_1)\searrow\{c_1,b_1\}$;\allowbreak
$(c_2,b_1,a_1)\searrow\{b_1,a_1\}$;\allowbreak
$(c_2,b_1,a_2)\searrow\{b_1,a_2\}$;\allowbreak
$(c_2,b_2,a_2)\searrow\{c_2,a_2\}$;\allowbreak
$(c_3,b_2,a_2)\searrow\{b_2,a_2\}$;\allowbreak
$(c_3,b_2,a_1)\searrow\{c_3,b_2\}$;\allowbreak
$(c_2,b_2,a_1)\searrow\{b_2,a_1\}$;\allowbreak
$(c_3,b_3,a_3)\searrow\{b_3,a_3\}$;\allowbreak
$(c_3,b_4,a_3)\searrow\{c_3,a_3\}$;\allowbreak
$(c_2,b_4,a_3)\searrow\{b_4,a_3\}$;\allowbreak
$(c_2,b_4,a_1)\searrow\{c_2,b_4\}$;\allowbreak
$(c_3,b_4,a_1)\searrow\{b_4,a_1\}$;\allowbreak
$(c_3,b_3,a_1)\searrow\{c_3,a_1\}$.\par}
Remaining spanning tree: $\{c_1,a_1\}$, $\{c_2,b_1\}$, $\{c_2,a_1\}$, $\{c_2,b_2\}$, $\{c_3,a_2\}$, $\{b_3,a_1\}$, $\{c_3,b_3\}$, $\{c_2,a_3\}$, $\{c_3,b_4\}$.

\smallskip\noindent\textbf{Type~VI}
(upper $\mathsf{Q}$, lower $\mathsf{P}'$):
\begin{gather*}
c_1<\{b_1,b_3\},\quad c_2<\{b_2,b_4\},\quad
c_3<\{b_1,b_2,b_3,b_4\},\\
b_1,b_2<\{a_1,a_2\},\quad b_3<\{a_1,a_3\},\quad b_4<\{a_2,a_3\}.
\end{gather*}
\begin{center}\begin{tikzpicture}[scale=0.63]\node (c1) at (0,0){$c_1$};\node (c2) at (2,0){$c_2$};\node (c3) at (4,0){$c_3$};\node (b1) at (0,2){$b_1$};\node (b2) at (2,2){$b_2$};\node (b3) at (4,2){$b_3$};\node (b4) at (6,2){$b_4$};\node (a1) at (0,4){$a_1$};\node (a2) at (2,4){$a_2$};\node (a3) at (4,4){$a_3$};\draw (c1)--(b1);\draw (c1)--(b3);\draw (c2)--(b2);\draw (c2)--(b4);\draw (c3)--(b1);\draw (c3)--(b2);\draw (c3)--(b3);\draw (c3)--(b4);\draw (b1)--(a1);\draw (b1)--(a2);\draw (b2)--(a1);\draw (b2)--(a2);\draw (b3)--(a1);\draw (b3)--(a3);\draw (b4)--(a2);\draw (b4)--(a3);\end{tikzpicture}\end{center}

\noindent\textbf{Collapse sequence for Type~VI.}
{\small\sloppy $(c_1,b_1,a_2)\searrow\{c_1,a_2\}$;\allowbreak
$(c_1,b_3,a_3)\searrow\{c_1,a_3\}$;\allowbreak
$(c_2,b_2,a_1)\searrow\{c_2,a_1\}$;\allowbreak
$(c_2,b_4,a_3)\searrow\{c_2,a_3\}$;\allowbreak
$(c_3,b_1,a_2)\searrow\{b_1,a_2\}$;\allowbreak
$(c_3,b_3,a_3)\searrow\{b_3,a_3\}$;\allowbreak
$(c_3,b_2,a_1)\searrow\{b_2,a_1\}$;\allowbreak
$(c_3,b_4,a_3)\searrow\{b_4,a_3\}$;\allowbreak
$(c_3,b_1,a_1)\searrow\{c_3,b_1\}$;\allowbreak
$(c_3,b_3,a_1)\searrow\{c_3,b_3\}$;\allowbreak
$(c_3,b_2,a_2)\searrow\{c_3,b_2\}$;\allowbreak
$(c_3,b_4,a_2)\searrow\{c_3,b_4\}$;\allowbreak
$(c_1,b_1,a_1)\searrow\{b_1,a_1\}$;\allowbreak
$(c_1,b_3,a_1)\searrow\{b_3,a_1\}$;\allowbreak
$(c_2,b_2,a_2)\searrow\{b_2,a_2\}$;\allowbreak
$(c_2,b_4,a_2)\searrow\{b_4,a_2\}$.\par}
Remaining spanning tree: $\{c_1,b_1\}$, $\{c_1,b_3\}$, $\{c_2,b_2\}$, $\{c_2,b_4\}$, $\{c_1,a_1\}$, $\{c_2,a_2\}$, $\{c_3,a_1\}$, $\{c_3,a_2\}$, $\{c_3,a_3\}$.

\smallskip\noindent\textbf{Type~VII}
(upper $\mathsf{Q}$, lower $\mathsf{Q}'$):
\begin{gather*}
c_1<\{b_1,b_3\},\quad c_2<\{b_1,b_2,b_4\},\quad
c_3<\{b_2,b_3,b_4\},\\
b_1,b_2<\{a_1,a_2\},\quad b_3<\{a_1,a_3\},\quad b_4<\{a_2,a_3\}.
\end{gather*}
\begin{center}\begin{tikzpicture}[scale=0.63]\node (c1) at (0,0){$c_1$};\node (c2) at (3,0){$c_2$};\node (c3) at (6,0){$c_3$};\node (b1) at (1,2){$b_1$};\node (b2) at (3,2){$b_2$};\node (b3) at (5,2){$b_3$};\node (b4) at (7,2){$b_4$};\node (a1) at (1,4){$a_1$};\node (a2) at (4,4){$a_2$};\node (a3) at (7,4){$a_3$};\draw (c1)--(b1);\draw (c1)--(b3);\draw (c2)--(b1);\draw (c2)--(b2);\draw (c2)--(b4);\draw (c3)--(b2);\draw (c3)--(b3);\draw (c3)--(b4);\draw (b1)--(a1);\draw (b1)--(a2);\draw (b2)--(a1);\draw (b2)--(a2);\draw (b3)--(a1);\draw (b3)--(a3);\draw (b4)--(a2);\draw (b4)--(a3);\end{tikzpicture}\end{center}

\noindent\textbf{Collapse sequence for Type~VII.}
{\small\sloppy $(c_1,b_1,a_2)\searrow\{c_1,a_2\}$;\allowbreak
$(c_1,b_3,a_3)\searrow\{c_1,a_3\}$;\allowbreak
$(c_2,b_4,a_3)\searrow\{c_2,a_3\}$;\allowbreak
$(c_2,b_1,a_2)\searrow\{b_1,a_2\}$;\allowbreak
$(c_3,b_3,a_3)\searrow\{b_3,a_3\}$;\allowbreak
$(c_3,b_4,a_3)\searrow\{b_4,a_3\}$;\allowbreak
$(c_1,b_1,a_1)\searrow\{c_1,b_1\}$;\allowbreak
$(c_1,b_3,a_1)\searrow\{c_1,b_3\}$;\allowbreak
$(c_2,b_4,a_2)\searrow\{c_2,b_4\}$;\allowbreak
$(c_2,b_2,a_2)\searrow\{c_2,a_2\}$;\allowbreak
$(c_3,b_3,a_1)\searrow\{c_3,b_3\}$;\allowbreak
$(c_3,b_4,a_2)\searrow\{c_3,b_4\}$;\allowbreak
$(c_3,b_2,a_2)\searrow\{b_2,a_2\}$;\allowbreak
$(c_3,b_2,a_1)\searrow\{c_3,b_2\}$;\allowbreak
$(c_2,b_2,a_1)\searrow\{b_2,a_1\}$;\allowbreak
$(c_2,b_1,a_1)\searrow\{b_1,a_1\}$.\par}
Remaining spanning tree: $\{c_2,b_1\}$, $\{c_2,b_2\}$, $\{b_3,a_1\}$, $\{b_4,a_2\}$, $\{c_1,a_1\}$, $\{c_2,a_1\}$, $\{c_3,a_1\}$, $\{c_3,a_2\}$, $\{c_3,a_3\}$.

\smallskip\noindent\textbf{Minimality.}
Since $\beta_j=\alpha_j=2$ for all $j$ and $B$ is an antichain, each middle element has exactly two incomparable upper and lower neighbours (no beat points). The values $s(a)\geq 2$ for all $a\in A$ and $t(c)\geq 2$ for all $c\in C$ (as listed in the Non-homeomorphism table below) confirm that $\hat{U}_a$ has no maximum and $\hat{F}_c$ has no minimum. Hence each type is minimal.
\smallskip\noindent\textbf{Non-homeomorphism.}
The multisets $\{s(a)\}_{a\in A}$ and $\{t(c)\}_{c\in C}$ are homeomorphism invariants:
\[
\begin{array}{c|cc}
\text{Type} & \{s(a)\} & \{t(c)\}\\
\hline
\mathrm{IV} & \{2,2,4\} & \{2,2,4\}\\
\mathrm{V} & \{2,2,4\} & \{2,3,3\}\\
\mathrm{VI} & \{2,3,3\} & \{2,2,4\}\\
\mathrm{VII} & \{2,3,3\} & \{2,3,3\}
\end{array}
\]
These are pairwise distinct, so Types~IV--VII are pairwise non-homeomorphic.

\smallskip\noindent\textbf{Duality structure.}
Order duality exchanges $\{s(a)\}$ and $\{t(c)\}$: Types~IV and~VII are self-dual; Types~V and~VI are dual to each other ($\mathrm{V}^{\mathrm{op}}\cong\mathrm{VI}$).
\end{proof}

\section{Conclusion}\label{sec:conclusion}

The classification of homotopically trivial non-contractible minimal
finite $T_0$-spaces is now complete for ten-point spaces. Combined
with the nine-point classification of Cianci and
Ottina~\cite{CianciOttina2020}, every such space through ten points
is explicitly described and its homotopical triviality certified by
an elementary collapse sequence. The analytic case analysis presented
in Sections~\ref{sec:bx1}--\ref{sec:bx4} is independently confirmed
by a SageMath enumeration (Remark~\ref{rem:sagemath}), which
identified all $7{,}929$ minimal posets on ten elements and tested
each for homotopical triviality, returning exactly the ten spaces
described here.

Every valid space has height exactly two: $h\leq 1$ is excluded by Proposition~\ref{prop:P35}, $h\geq 4$ by Theorem~\ref{thm:height}, and $h=3$ by the case analyses of Sections~\ref{sec:bx3}--\ref{sec:bx4}. The middle set is always an antichain, and in the four-middle case equation~\eqref{eq:1} forces $\beta_j=\alpha_j=2$ for all~$j$.

Three tools drive the proofs: Lemma~\ref{lem:L31} (strict upper/lower set sizes), Lemma~\ref{lem:naked} (naked edges give non-trivial $1$-cycles), and the naked-edge budget $D$ of Definition~\ref{def:reach} with Lemmas~\ref{lem:iso}, \ref{lem:beat}, and~\ref{lem:fullL} (eliminating most orbit combinations without explicit cycles).

The uniformity at nine and ten points raises a natural question: does every homotopically trivial non-contractible minimal finite space have height two and antichain middle set, or do new phenomena appear at larger cardinalities?

\end{document}